\newcommand{\blind}{1}
\definecolor{darkblue}{rgb}{0.0,0.0,0.55}
\newcommand*{\algrule}[1][\algorithmicindent]{\makebox[#1][l]{\hspace*{.5em}\vrule height .75\baselineskip depth .25\baselineskip}}%
\g@addto@macro\normalsize{%
  \setlength\abovedisplayskip{9pt}
  \setlength\belowdisplayskip{9pt}
  \setlength\abovedisplayshortskip{5pt}
  \setlength\belowdisplayshortskip{5pt}
}
\def\ALG@printindent{%
    \ifnum \theALG@nested>0
        \ifx\ALG@text\ALG@x@notext
            \addvspace{-3pt}
        \else
            \unskip
            \ALG@printindent@tempcnta=1
            \loop
                \algrule[\csname ALG@ind@\the\ALG@printindent@tempcnta\endcsname]%
                \advance \ALG@printindent@tempcnta 1
            \ifnum \ALG@printindent@tempcnta<\numexpr\theALG@nested+1\relax
            \repeat
        \fi
    \fi
    }%
\patchcmd{\ALG@doentity}{\noindent\hskip\ALG@tlm}{\ALG@printindent}{}{\errmessage{failed to patch}}
\newcommand{\ben}{\begin{enumerate}}
\newcommand{\een}{\end{enumerate}}
\newcommand{\beq}{\begin{equation}}
\newcommand{\eeq}{\end{equation}}
\newcommand{\bde}{\begin{description}}
\newcommand{\ede}{\end{description}}
\newcommand{\argmax}{\operatornamewithlimits{argmax}}
\newcommand{\NormRV}{\mathcal{N}}
\newtheorem{theorem}{Theorem}[section]
\newtheorem{corollary}[theorem]{Corollary}
\newtheorem{lemma}[theorem]{Lemma}
\newtheorem{assumption}[theorem]{Assumption}
\newcommand{\thetavec}{\ensuremath{{\bm{\theta}}}}
\newcommand{\Zmat}{\ensuremath{\bm{Z}}}
\newcommand{\Lambdamat}{\ensuremath{\bm{\Lambda}}}
\newcommand{\Omegamat}{\ensuremath{\bm{\Omega}}}
\newcommand{\Ommat}{\Omegamat}
\newcommand{\zerovec}{\ensuremath{{\bf 0}}}
\newcommand{\eps}{\ensuremath{\epsilon}}
\newcommand{\be}{\begin{equation}}
\newcommand{\ee}{\end{equation}}
\newcommand{\beqa}{\begin{eqnarray*}}
\newcommand{\eeqa}{\end{eqnarray*}}
\newcommand{\beqn}{\begin{eqnarray}}
\newcommand{\eeqn}{\end{eqnarray}}
\newcommand{\ba}{\begin{array}}
\newcommand{\ea}{\end{array}}
\newcommand{\bc}{\begin{center}}
\newcommand{\ec}{\end{center}}
\newcommand{\btab}{\begin{tabular}}
\newcommand{\etab}{\end{tabular}}
\newcommand{\mb}{\makebox}
\newcommand{\st}{\stackrel}
\newcommand{\Ind}{1\!\mathrm{l}}
\newcommand{\ind}{\, \raise-2pt\hbox{$\st{\mb{\scriptsize ind}}{\sim}$}\, }
\newcommand{\iid}{\, \raise-2pt\hbox{$\st{\mb{\scriptsize iid}}{\sim}$}\,}
\newcommand{\norm}[1]{\|#1\|}
\renewcommand{\P}{\mathrm{P}}
\newcommand{\Var}{\mathop{\rm Var}\nolimits}
\newcommand{\tr}{\mathrm{tr}}
\newcommand{\bX}{\bm{X}}
\newcommand{\bx}{\bm{x}}
\newcommand{\bbeta}{\bm{\beta}}
\newcommand{\bDelta}{\bm{\Delta}}
\newcommand{\bSigma}{\bm{\Sigma}}
\newcommand{\bOmega}{\bm{\Omega}}
\newcommand{\bmS}{\bm{S}}
\newcommand{\bmA}{\bm{A}}
\newcommand{\bmI}{\bm{I}}
\newcommand{\RR}{{\mathbb R}}
\mathchardef\given="626A
\long\def\beginskip#1\endskip{}
\def\endskip{}
\def\arg{\mathop{\rm arg}}
\newcommand{\eig}{\mathrm{eig}}
\begin{document}

\def\spacingset#1{\renewcommand{\baselinestretch}%
{#1}\small\normalsize} \spacingset{1}


\if1\blind
{
  \title{\bf Precision Matrix Estimation under the Horseshoe-like Prior--Penalty Dual}
  \author{Ksheera Sagar\\
    Department of Statistics, Purdue University\\
    and \\
    Sayantan Banerjee\\
    Indian Institute of Management, Indore\\
    and\\
    Jyotishka Datta\\
    Department of Statistics, Virginia Polytechnic Institute and State University\\
    and\\
    Anindya Bhadra \\
    Department of Statistics, Purdue University 
    }
    \date{}
  \maketitle
} \fi

\if0\blind
{
  \bigskip
  \bigskip
  \bigskip
  \begin{center}
     {\LARGE{\bf Precision Matrix Estimation under the Horseshoe-like Prior--Penalty Dual \par}}
\end{center}
  \medskip
} \fi

\bigskip
\begin{abstract}
Precision matrix estimation in a multivariate Gaussian model is fundamental to network estimation. Although there exist both Bayesian and frequentist approaches to this, it is difficult to obtain good Bayesian and frequentist properties under the same prior--penalty dual. To bridge this gap, our contribution is a novel prior--penalty dual that closely approximates the graphical horseshoe prior and penalty, and performs well in both Bayesian and frequentist senses. A chief difficulty with the horseshoe prior is a lack of closed form expression of the density function, which we overcome in this article. In terms of theory, we establish posterior convergence rate of the precision matrix that matches the oracle rate, in addition to the frequentist consistency of the MAP estimator. In addition, our results also provide theoretical justifications for previously developed approaches that have been unexplored so far, e.g. for the graphical horseshoe prior. Computationally efficient EM and MCMC algorithms are developed respectively for the penalized likelihood and fully Bayesian estimation problems. In numerical experiments, the horseshoe-based approaches echo their superior theoretical properties by comprehensively outperforming the competing methods. A protein--protein interaction network estimation in B-cell lymphoma is considered to validate the proposed methodology.
\end{abstract}
\noindent%
{\it Keywords:}  graphical models; non-convex optimization; posterior concentration; posterior consistency; sparsity. 
\newpage
\spacingset{1.45}
	\section{Introduction}
	\label{sec:intro}
	High-dimensional precision matrix estimation under a multivariate normal model is a fundamental building block for network estimation, and a common thread connecting disparate applications such as inference on gene regulatory networks  \citep{sanguinetti2019gene}, econometrics \citep{fan2016overview, callot2019nodewise}, and neuroscience \citep{ryali2012estimation}. The frequentist solution to this problem is now relatively well understood and several useful algorithms exist; see \citet{pourahmadi2011covariance} for a detailed review. However, interested readers will quickly discern that the Bayesian literature on this problem is still sparse, barring some notable exceptions described in Section~\ref{sec:Related_work}.  The reason for this is simple: the focus of a Bayesian is on the entire posterior and quantification of uncertainty using the said posterior; a problem fundamentally more demanding computationally. Consequently, the virtues of probabilistic uncertainty quantification notwithstanding, the Bayesian treatment to precision matrix estimation has received relatively scant attention from impatient practitioners. Furthermore, a penalized likelihood estimate with good frequentist properties need not correspond to good Bayesian posterior concentration properties under the corresponding prior. A notable example of this in linear regression models is the lasso penalty \citep{tibshirani1996lasso}, and its Bayesian counterpart using the double exponential prior \citep{park_bayesian_2008}, for which \citet{castillo2015bayesian} assert: \emph{``the LASSO is essentially non-Bayesian, in the sense that the corresponding full posterior distribution is a useless object.''}  We address this gap in the literature in the context of graphical models. Our contribution is a novel prior--penalty dual that makes both fully Bayesian and fast penalized likelihood estimation feasible. The key distinguishing feature of our work is that we provide theoretical and empirical support for \emph{both} Bayesian and frequentist solutions to the problem under the same prior--penalty dual. It is shown that the Bayesian posterior as a whole concentrates around the truth and the penalized likelihood point estimate is consistent. To our knowledge, ours is the first work to establish these results using continuous shrinkage priors under an \emph{arbitrary sparsity pattern} in the true precision matrix. This is at a contrast to the current state of the art in theory that imposes additional constraints on the graph, e.g., banded-ness or more general decomposable structures \citep{banerjee2014posterior, xiang2015high,liu2019empirical, lee2021estimating}. Typically these assumptions are made for computational and theoretical tractability rather than any intrinsic subject matter knowledge. The reason our work is able to avoid these restrictive assumptions is because we work with continuous ``global-local'' shrinkage priors and impose sparsity in a weak sense \citep[see, e.g., ][]{bhadra2019lasso}. 
	
	The motivating data set arises from a biological application. Protein--protein interaction networks have been found to play a crucial role in cancer \citep{ha2018personalized}. 
	One such significant effort in this direction is ``The Cancer Genome Atlas'' program \citep{weinstein2013cancer} that has collected data from over 7,700 patients across 32 different tumor types. From this repository, we retrieve proteomic data of 33 patients with ``Lymphoid Neoplasm Diffuse Large B-cell Lymphoma,'' which is a cancer that starts in white blood cells and spreads to lymph nodes. Our findings are contrasted with that of \citet{ha2018personalized}.
	\subsection{The Current State of the Art and Our Contributions in Context}
	\label{sec:Related_work}
	A Gaussian graphical model (GGM) remains popular as a fundamental building block for network estimation because of the ease of interpretation of the resulting precision matrix estimate: an inferred off-diagonal zero corresponds to conditional independence of the two corresponding nodes given the rest \citep[see, e.g.,][]{lauritzen1996}.
	
	Among the most popular frequentist approaches for estimating GGMs are the graphical lasso \citep{friedman2008glasso} and the graphical SCAD \citep{fan2009network}, which are respectively the graphical extensions of the lasso \citep{tibshirani1996lasso} and SCAD \citep{fan2001scad} penalties in linear models. Similarly, the CLIME estimator of \citet{cai2011constrained} is a graphical application of the Dantzig penalty \citep{candes2010power}.  \citet{fan2016overview} propose factor-based models for estimation of precision matrices, which are particularly attractive in financial applications where the precision matrix of outcome variables conditioned on some common factors being sparse is a sensible assumption. Alternatively, \citet{callot2019nodewise} opt for a node-wise regression approach using $\ell_1$ penalty for minimizing the risk of a Markowitz portfolio. The positive definiteness of their estimate is guaranteed asymptotically, which nevertheless remains hard to establish in finite samples; a common issue with node-wise regression approaches. \citet{zhang2014sparse} propose a new empirical loss termed the \emph{$D$-trace loss} to avoid computing the $\log$ determinant term in the $\ell_1$ penalized loss. Under certain conditions they also prove that the resulting estimate is identical to the CLIME estimate \citep{cai2011constrained}. A ridge type estimate for precision matrix termed ROPE is proposed by \citet{kuismin2017precision}, who use the squared Frobenius norm of the precision matrix as a penalty function. Another distribution free version of the ridge estimate is proposed by \citet{wang2015shrinkage}. An elastic net penalty  \citep{zou2005regularization} is used to determine the functional connectivity among brain regions by \citet{ryali2012estimation}. A comprehensive theoretical treatment for the rate of convergence of precision matrix estimates is given by \citet{lam2009sparsistency}. 
	
	The frequentist approaches listed above generally enjoy faster and more scalable computation, owing to being point estimates. Nevertheless, from a Bayesian perspective, a common theme with these penalized approaches is that the posterior concentration properties of the corresponding priors remain completely unexplored. Moving now to Bayesian methodologies for \emph{unstructured precision matrices}, the literature is relatively scant. \citet{wang2012bayesian} proposes a Bayesian version of the graphical lasso and uses a clever decomposition of the precision matrix to facilitate block Gibbs sampling and to guarantee the positive definiteness of the resulting estimate. \citet{banerjee2015bayesian} consider a similar prior structure as the Bayesian graphical lasso, with the exception that they put a large point-mass at zero for the off-diagonal elements of the precision matrix. Under assumptions of sparsity, they derive posterior convergence rates in the Frobenius norm, and also provide a Laplace approximation method for computing marginal posterior probabilities of models. Spike-and-slab variants with double exponential priors is proposed by \citet{gan2019bayesian}. A common issue with the spike-and-slab approach is the presence of binary indicator variables, which typically hinder posterior exploration and the Bayesian lasso estimate is known to be biased for large signals. Both of these issues are addressed by the graphical horseshoe estimate proposed by \citet{li2019graphical}, which is an application of the popular global-local horseshoe prior \citep{carvalho2010horseshoe} in GGMs. \citet{li2019graphical} provide considerable empirical evidence of superior performance over several competing Bayesian and frequentist approaches. Nevertheless, their theoretical results are limited to upper bounds on some Kullback--Leibler risk properties and the bias of the resulting estimate. Consequently, whether the graphical horseshoe posterior has correct concentration properties has remained an open question. Similarly, its frequentist dual: the penalized likelihood estimate, also remains unavailable, mainly because there is no closed form of the horseshoe prior or penalty; only a normal scale mixture representation. Both of these issues are resolved in the present paper. We propose a novel prior--penalty dual that closely approximates the graphical horseshoe prior with the density being available explicitly as well as a normal scale mixture, which has important implications in theory and in practice, and in both Bayesian and frequentist settings. Moreover, as a corollary to one of our main results, the posterior concentration properties of the graphical horseshoe is also established, for the first time.
	
\section{Formulation of the Prior--Penalty Dual}
	\label{prior_specification}
	We begin the formal treatment by pointing the interested readers to Supplementary Section~\ref{sec:notations} for a summary of mathematical notations used in the paper. Let $\bX^{(n)} = (\bX_1,\ldots,\bX_n)^T$ be a random sample from a $p$-dimensional normal distribution with mean $\zerovec$ and a positive definite covariance matrix $\bSigma$. The corresponding precision matrix, or the inverse covariance matrix $\bOmega = (\!(\omega_{ij})\!)$ is defined as $\bOmega = \bSigma^{-1}$. The natural estimator of $\bSigma$ is $\bmS = n^{-1}\sum_{i=1}^{n}\bX_i \bX_i^T$. We assume that $\bOmega$ is sparse, in the sense that the number of non-zero off-diagonal elements is small. We utilize the duality between a Bayesian prior and penalty, where the penalized likelihood estimate is understood to correspond to the maximum a posteriori (MAP) estimate under a given prior. Hence, for fully Bayesian inference on $\bOmega$, we need a suitable prior that also results in a penalty function with good frequentist properties; a non-trivial problem even in linear models \citep{castillo2015bayesian}. We put independent horseshoe-like priors \citep{bhadra2019horseshoe} on the off-diagonal and non-informative priors on the diagonal elements of $\bOmega$, while restricting the prior mass to positive definite matrices. A key benefit of the horseshoe-like prior, which closely mimics the sparsity-inducing global-local horseshoe prior \citep{carvalho2010horseshoe} is that the prior density, and hence the penalty, is available in closed form under the former, unlike under the latter. This allows one to study the penalty (equivalently, the negative logarithm of the prior density) directly, and to establish important properties concerning convexity (see, e.g., Lemma~\ref{lemma:concave}), which remain much more difficult under the horseshoe prior.
	For the fully Bayesian model, the element-wise prior specification induced by the horseshoe-like prior is,
	\begin{eqnarray}
		\label{eqn:prior-1}
		\omega_{ij}\mid a &\sim & \pi(\omega_{ij} \mid a),\;1 \leq i < j \leq p; \qquad
		\omega_{ii} \propto  1,\; 1 \leq i \leq p,
	\end{eqnarray}
	where $\pi(\omega_{ij} ; a) = \log \left( 1 + a/\omega_{ij}^2\right)/(2\pi a^{1/2})$ gives the horseshoe-like density function for $\omega_{ij}$. The motivation for using this density is two-fold: it has a sharp spike near zero, encoding the Bayesian prior belief  that most signals are ignorable; and it also possesses very heavy, polynomially decaying tails, allowing for identification of signals. These two properties closely mimic the popular horseshoe prior for sparse signals \citep{carvalho2010horseshoe}, and, in fact, one achieves the same origin and tail rates for the density function in terms of $\omega_{ij}$ as in the original horseshoe. The crucial advantage with the horseshoe-like, then, is that there is a closed form to the density function, unlike the horseshoe prior. Nevertheless, similar to the original horseshoe prior, the horseshoe-like prior also admits a convenient latent variable representation as a Gaussian scale-mixture \citep{bhadra2019horseshoe}. To be precise, one can write,
	\begin{equation}
		\label{prior}
		\omega_{ij} \mid \nu_{ij},a \sim  \NormRV\left( 0, \dfrac{a}{2\nu_{ij}}\right) ,\; \pi(\nu_{ij})  = \frac{1-\exp(-\nu_{ij})}{2\pi^{1/2}\nu_{ij}^{3/2}},
	\end{equation}
	where marginalizing over the latent $\nu_{ij}$ leads to the desired $\pi(\omega_{ij} ; a)$ identified above. For modeling valid precision matrices, we must restrict the prior mass on the space of symmetric positive definite matrices $\mathcal{M}_p^+$. Combining the unrestricted prior as in (\ref{eqn:prior-1}) and (\ref{prior}), along with the above restriction, leads to the joint prior specification on $\bOmega$ as,
	\begin{equation}
		\label{prior_2}
		\pi(\bOmega \mid \nu; a)\pi(\nu) \propto \underset{i,j: i < j} {\prod}\left\{1-\exp(-\nu_{ij})\right\}\nu_{ij}^{-1}\exp\left(\frac{-\nu_{ij}\omega_{ij}^2}{a}\right)\Ind_{\mathcal{M}_p^+}(\bOmega),
	\end{equation}
	where $\nu = \{\nu_{ij}\}_{i < j}$. In this formulation, the latent parameters $\nu_{ij}$ are component-specific, or local, and the shared parameter $a$ is global, situating the horseshoe-like in the broader category of global-local priors \citep{bhadra2019lasso}. Further details on the induced marginal prior on $\bOmega$ are presented in Supplementary Section~\ref{supp-ghs-prior-details}. Although it is possible to put a further hyperprior on $a$, it is considered fixed for point estimation approaches, and is estimated by the effective model size approach of \cite{piironen2017sparsity} to avoid a collapse to zero. We defer the details to Supplementary Section~\ref{global_scale_param_computation}. With the prior specification as in (\ref{prior_2}), the $\log$-posterior $\mathcal{L}$ thus becomes,
	\begin{equation}
		\label{likeli}
		\mathcal{L} \propto \frac{n}{2}\log \det \bOmega -\frac{n}{2}\text{tr}(\bmS\bOmega) + \underset{i,j: i<j} {\sum} \left\{\log\left(1-\exp(-\nu_{ij})\right) -\log \nu_{ij} - \frac{\nu_{ij}\omega_{ij}^2}{a} \right\}.
	\end{equation}
\begin{figure}[!t]
	\includegraphics[height=3.81cm,width=\textwidth]{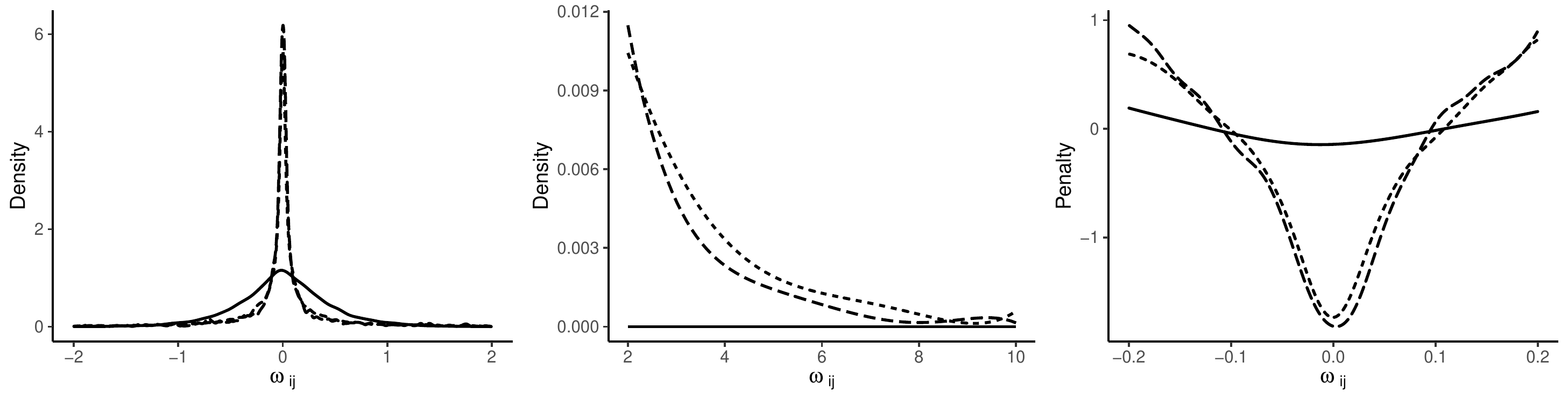}
	\caption{Smoothed density estimates of a randomly chosen off-diagonal element based on $10^4$ Markov chain Monte Carlo samples for the graphical horseshoe-like (dashes), the graphical horseshoe (small dashes) and the Bayesian graphical lasso (solid) priors; providing a visual comparison of (left) spikes near the origin, (middle) heaviness of the tails, and (right) the induced penalty functions for $p=10$. \label{fig:priors}}
\end{figure}

	At this point, the corresponding hierarchy of the horseshoe prior, which the horseshoe-like closely approximates, is well worth mentioning. The horseshoe prior \citep{carvalho2010horseshoe}, recognized as a state-of-the-art for sparse signal recovery \citep{bhadra2019horseshoe}, was deployed in estimating GGMs by \citet{li2019graphical} with the following hierarchy:
	\begin{equation}
		\label{hs_prior}
		\omega_{ij} \mid \lambda_{ij},\tau \sim  \NormRV\left( 0, \lambda_{ij}^2\tau^2\right) ,\; \pi(\lambda_{ij}^2)  \sim \mathcal{C}^+(0,1) ,\; \pi(\tau^2) \sim \mathcal{C}^+(0,1),
	\end{equation}
	where $\mathcal{C}^+(0,1)$ denotes the standard half-Cauchy distribution. It is recognized as the state of the art in sparse signal recovery in linear models \citep{bhadra2019horseshoe}, and was successfully deployed in estimating sparse precision matrices by \citet{li2019graphical}. Figure \ref{fig:priors} plots the smoothed histogram of prior densities of a randomly chosen off-diagonal element near the origin and at the tails for the graphical horseshoe-like along with two of its relatives: the graphical horseshoe  \citep{li2019graphical} and the Bayesian graphical lasso \citep{wang2012bayesian}. The corresponding penalties, given by the negative of the logarithm of the densities, are also shown. The key observations are: (a) the graphical horseshoe and graphical horseshoe-like densities are very similar and (b) both have far sharper spikes near the origin and heavier tails compared to the Laplace priors used in the Bayesian graphical lasso, providing an intuitive basis for the superiority of the horseshoe-family in sparse signal recovery. Extensive formal support for these observations are available in linear models \citep{bhadra2019lasso}, but barring some empirical evidence, the corresponding theoretical support is lacking in graphical models.
	
	Some comments on the desirability of a Gaussian scale mixture representation are also in order. First, the latent mixing variables make it easier to derive fully Bayesian computational strategies via data augmentation. A similar observation is true for point estimates via the expectation--maximization algorithm. Second, using a result of \citet{barndorff1982normal}, it is possible to derive a precise connection between the densities of the mixing variables and that of the resultant mixture. In particular, if the mixing densities are regularly varying in the tails, then so is the resultant Gaussian mixture. Since regular variation is closed under many nonlinear transformations, the heavier tails of global-local priors impart crucial robustness properties for estimating nonlinear, many-to-one functions of the parameters of interest in multi-parameter problems \citep{bhadra2015default}, and help avoid marginalization paradoxes of \citet{dawid1973marginalization}. 
	
\section{Estimation Procedure}
	\subsection{ECM Algorithm for MAP Estimation}
	\label{EM_algo}
	We utilize the Gaussian mixture representation of the horseshoe-like prior with latent scale parameters to devise an Expectation Conditional Maximization (ECM) \citep{meng93} approach to MAP estimation, building on the calculations for linear models by \citet{bhadra2019horseshoe}. For updating the elements of the precision matrix, we use the coordinate descent technique proposed by \citet{wang2014coordinate}, which guarantees the positive definiteness of the precision matrix at each update.
	\medskip
	\newline
	\textbf{E Step:} Following \citet{bhadra2019horseshoe}, we calculate the conditional expectation of the latent variable $\nu_{ij}, 1\leq i < j \leq p$, at current iteration $(t)$ as follows:
	\begin{equation}
		\label{e_step}
		\nu_{ij}^{(t)} = \mathbb{E}(\nu_{ij} \mid \omega_{ij}^{(t)}, a)  =\left(\log\left( 1+ \frac{a}{(\omega_{ij}^{(t)})^2}\right)\right)^{-1} \frac{a^2}{\left((\omega_{ij}^{(t)})^2 + a\right)\left( (\omega_{ij}^{(t)})^2\right)}.
	\end{equation}
	\textbf{CM Steps:} Having updated the latent parameters in the E-Step, the coordinate descent approach of \cite{wang2014coordinate} is used to update one column of the precision matrix at a time. Without loss of generality, we present the steps for updating the $p$th column. First we divide the precision matrix $\bOmega$ and the sample covariance matrix $\bmS$ into blocks as follows:
	\begin{center}
		\begin{tabular}{ c c }
			$\bOmega = \begin{bmatrix}
				\bOmega_{11} & \bOmega_{12}\\
				\bOmega_{12}^T & \Omega_{22}\\
			\end{bmatrix}$,  & 
			$ n\bmS = \begin{bmatrix}
				\bmS_{11} & \bmS_{12}\\
				\bmS_{12}^T & S_{22}\\
			\end{bmatrix}$,
		\end{tabular}
	\end{center}
	where, $\bOmega_{11}$ is a matrix of dimension $(p-1)\times(p-1)$ of the top left block of $\bOmega$; $\Omega_{22}$ is the $p$th diagonal element and $\bOmega_{12}$ is a $(p-1)\times 1$ dimensional vector of the remaining elements in the $p$th column. The decomposition of $n\bmS$ is analogous. We define $\gamma = \Omega_{22} - \bOmega_{12}^T\bOmega_{11}^{-1}\bOmega_{12}$ and $\bbeta = \bOmega_{12}$. With these transformations, we simplify  (\ref{likeli}) to update the $p$th column. We have,
	\begin{equation}
		\label{simpli_wang_coord_dec}
		\begin{split}
			\log \det \bOmega  & = \log (\gamma) +c_1, \\
			\text{tr}(n\bmS\bOmega) &= 2\bmS_{12}^T\bbeta + S_{22}\gamma + S_{22}\bbeta^T\bOmega_{11}^{-1}\bbeta + c_2,\\
			{\sum_{i,j: i<j}}\frac{\nu_{ij}^{(t)}}{a}\cdot \omega_{ij}^2 & = \bbeta^T\Lambdamat^{(t)}\bbeta + c_3,\\
			\Lambdamat^{(t)} & = \frac{1}{a} \mathrm{diag}\left(  \nu_{1p}^{(t)}, \ldots, \nu_{p-1,p}^{(t)}\right),
		\end{split}
	\end{equation}
	where $c_1,\text{ }c_2,\text{ }c_3$ are constants independent of $\bbeta,\text{ }\gamma$. Now the log-posterior with the transformed variables is given by,
	\begin{equation*}
		\mathcal{L} \propto \frac{n}{2}\log (\gamma) -\frac{1}{2}\left(2\bmS_{12}^T\bbeta + S_{22}\gamma + S_{22}\bbeta^T\bOmega_{11}^{-1}\bbeta \right) - \bbeta^T\Lambdamat^{(t)}\bbeta.
	\end{equation*}
	\begin{algorithm}[!t]
		\caption{ECM algorithm for MAP estimation (GHS-LIKE-ECM)}\label{em_algo_pseudo_code}
		\begin{algorithmic}
			\Function{ECM for Graphical Horseshoe-Like penalty}{$\bOmega_s, \bmS, n , p$} 
			\State $\bOmega_s = (\!(\omega_{s,ij})\!)$: starting point; $\bOmega_u = (\!(\omega_{u,ij})\!)$: updated precision matrix; initially set to $\bmI_p$.
			\State $\bmS=\bX^{(n)T}\bX^{(n)}/n$; $\bm{N} = (\!(N_{ij})\!)$: A matrix of dimension $p\times p$ which stores
			\State  $\mathbb{E}(\nu_{ij} \mid  \omega_{s,ij}, a)$; $(n,p)$: Sample size and number of variables respectively 
			\While{$\bDelta =  \|\bOmega_u - \bOmega_s \|_2 < \text{tolerance } (=10^{-3})$}
			\For{$j = 2\to p$}
			\For{$i = 1\to (j-1)$}
			\State $N_{ij}\text{ }:= \mathbb{E}(\nu_{ij} \mid \omega_{s,ij}, a)  = \left(\log\left( 1+ \frac{a}{(\omega_{s,ij})^2}\right)\right)^{-1} \frac{(a)^{2}}{\left((\omega_{s,ij})^2 + a\right)\left( (\omega_{s,ij})^2\right)}. $
			\EndFor
			\EndFor
			\State $\bm{N} \leftarrow \bm{N} +\bm{N}^T$. This is required to compute $\Lambdamat^{(t)}$ in display (\ref{simpli_wang_coord_dec}); Set $\bOmega_u = \bOmega_s$
			\For{$i= 1\leftarrow p$}
			\State Update $i^{th}$ column of $\bOmega_u$ using coordinate descent algorithm of \citet{wang2014coordinate} described above.
			\EndFor
			\EndWhile
			\State \Return $\hat\bOmega^{\mathrm{MAP}}=\bOmega_u$
			\EndFunction
		\end{algorithmic}
	\end{algorithm}
	Maximizing the above over $\bbeta, \gamma$ gives the required update as:
	\begin{equation}
		\label{update_beta_gamma}
		\hat{\gamma} =  \frac{n}{S_{22}},\quad
		\hat{\bbeta}  = -({S_{22}\bOmega_{11}^{-1}} + 2\cdot \Lambdamat^{(t)})^{-1}\bmS_{12}^T.
	\end{equation}
Having updated $\bbeta,\text{ }\gamma$ from (\ref{update_beta_gamma}), the $p$th column update of the precision matrix for the next iteration $(t+1)$ becomes
	\begin{equation}
		\label{pth_col_update}
		\hat\bOmega_{12}^{(t+1)}  = \hat{\bbeta},\quad
		\hat\bOmega_{12}^{T(t+1)} = \hat{\bbeta}^T,\quad
		\hat\Omega_{22}^{(t+1)}  = \hat{\gamma} + \hat{\bbeta}^T\bOmega_{11}^{-1}\hat{\bbeta}.
	\end{equation}
	We repeat the above steps for the remaining $(p-1)$ columns to complete the CM Step updates for $\bOmega$, until convergence to the MAP estimator $\hat{\bOmega}^{\mathrm{MAP}}$. The procedure is summarized in Algorithm~\ref{em_algo_pseudo_code}. The most computationally expensive step is the required inverse of a $(p-1)\times (p-1)$ matrix to compute $\hat\bbeta$ in (\ref{update_beta_gamma}), which needs to be repeated for each of the $p$ columns, giving a per iteration complexity of $O(p^4)$ for the algorithm. 
	\subsection{Posterior Sampling for the Fully Bayesian Estimate}
	\label{subsec:GHS-MCMC}
	For fully Bayesian estimation, we also outline the MCMC sampling procedure. With substitutions $2\nu_{ij} \mapsto t_{ij}^2$ and $a\mapsto \tau^2$, the prior in (\ref{prior}) can be written with a different hierarchy as follows:
	\begin{equation}
		\omega_{ij} \mid \nu_{ij},\tau \sim  \mathcal{N}\left( 0, \tau^2/t_{ij}^2\right) ,\; \pi(t_{ij})  =   \frac{1 -\exp\left(-t_{ij}^2/2\right)}{(2\pi)^{1/2}t_{ij}^2} ,\; t_{ij} \in \mathbb{R} ,\; \tau^2 > 0,  \nonumber
	\end{equation}
	where $\pi(t_{ij})$ above is known as the the slash normal density, expressed as $(\phi(0) - \phi(t_{ij}))/t_{ij}^2$, where $\phi(\cdot)$ is the standard normal density \citep{bhadra2019horseshoe}. Introducing a further local latent variable $r_{ij}$, the density for $t_{ij}$  can also be written as a normal scale mixture, where the scale follows a Pareto distribution, that is,
	\begin{equation*}
		t_{ij} \mid r_{ij} \sim \mathcal{N}(0, r_{ij}) ,\; r_{ij} \sim \text{Pareto}\left(1/2\right). 
	\end{equation*}
	For efficient sampling, the above Pareto scale mixture can be represented as a product of an exponential density and an indicator function as follows:
	\begin{equation}
		\begin{split}
			\pi(t_{ij}) & = \frac{1}{2}\int_{1}^\infty \frac{1}{(2\pi r_{ij})^{1/2}}\exp\left(-\frac{t_{ij}^2}{2r_{ij}}\right)r_{ij}^{-3/2}dr_{ij} = \frac{1}{2(2\pi)^{1/2}}\int_0^1 \exp\left(-\frac{t_{ij}^2m_{ij}}{2}\right)dm_{ij}, \\ \nonumber
			\text{i.e., } \pi(t_{ij}, m_{ij}) & = \frac{1}{2(2\pi)^{1/2}}\exp\left(\frac{-t_{ij}^2 m_{ij}}{2}\right)\Ind(0<m_{ij}<1).
		\end{split}    
	\end{equation}
	Different choices of prior for the global scale parameter are possible, but we consider $\tau \sim \mathcal{C}^{+}(0,1)$. \citet{makalic2015simple} observed that: if $\tau^2 \mid \xi \sim \text{InvGamma}(1/2,1/\xi)$ and $\xi \sim \text{InvGamma(1/2,1)}$ then marginally $\tau \sim \mathcal{C}^{+}(0,1)$. Using this, we can write the posterior updates of $\tau,\text{ }\xi$ as follows:
	\begin{equation}
		\label{scale_mix_cauchy_sampler}
		\begin{split}
			\tau^2 \mid \xi, \bX^{(n)}, \bOmega, \{t_{ij}\}_{i<j}, \{m_{ij}\}_{i<j}\ &\  \sim\  \text{InvGamma}((p(p-1)/2+1)/2,\, 1/\xi+\sum_{i,j:i<j}{t_{ij}^2\omega_{ij}^2/2}),\\
			\xi \mid \tau\ &\  \sim\  \text{InvGamma}(1+1/\tau^2).
		\end{split}
	\end{equation}
	Following the remaining updates from the graphical horseshoe sampler of \citet{li2019graphical}, the complete MCMC scheme for the graphical horseshoe-like is as outlined in Algorithm~\ref{alg:GHS}. The per iteration complexity of the algorithm is $O(p^4)$. Diagnostic plots for both the ECM and MCMC algorithms are given in Supplementary Section~\ref{sec:simulation_study_2}.
	
	\begin{algorithm}[!t]
		\caption{The Graphical Horseshoe-Like MCMC Sampler (GHS-LIKE-MCMC)}
		\label{alg:GHS}
		\begin{algorithmic}
			\Function{GHS-Like}{$\bmS,n,burnin,nmc$}
			\Comment{where $\bmS=\bX^{(n)T}\bX^{(n)}/n$, $n$ = sample size}
			
			\State Set $p$ to be number of rows (or columns) in $\bmS$
			\State Set initial values $\bOmega = \bmI_{p},\, \bSigma = \bmI_{p},\, \bm{T} = \bm{1}^T\bm{1},\, \bm{M} = \bm{1}^T\bm{1},\, \tau=1$, where $\bm{1}$ is a $p$-dimensional vector with all elements equal to 1, $\bm{T} = (\!(t_{ij}^2)\!)$, $\bm{M} = (\!(m_{ij})\!)$.	
			
			\For{$iter = 1$ to $(burnin+nmc)$}
			\For{$i = 1$ to $p$}
			
			\State $\gamma \sim \text{Gamma}(\text{shape}=n/2+1,\, \text{rate}=2/{s}_{ii})$  \Comment{sample $\gamma$}
			\State $\bOmega_{(-i)(-i)}^{-1} = \bSigma_{(-i)(-i)} - \bm{\sigma}_{(-i)i}\bm{\sigma}_{(-i)i}'/{\sigma}_{ii}$
			\State $\bm{C} = \left[{s}_{ii}\bOmega_{(-i)(-i)}^{-1}+\left(\text{diag}\left(\tau^2/t_{(-i)i}\right)\right)^{-1}\right]^{-1}$
			\State $\bm{\beta} \sim \text{Normal}(-\bm{C}\bmS_{(-i)i},\bm{C})$  \Comment{sample $\bm{\beta}$}
			\State $\bm{\omega}_{(-i)i} = \bm{\beta},\, {\omega}_{ii} = \gamma + \bm{\beta}^T\bOmega_{(-i)(-i)}^{-1}\bm{\beta}$  \Comment{variable transformation}
			
			\State $\bm{t}_{(-i)i} \sim \text{Gamma}(\text{shape}=3/2, \text{rate}= m_{(-i)i}/2+\bm{\omega}_{(-i)i}^2/2\tau^2)$  \Comment{sample $t$, where $\bm{t}_{(-i)i}$ is a vector of length $(p-1)$ with entries $t_{ji}^2, j \ne i$}
			\State $\bm{m}_{(-i)i} \sim \text{Exponential}(\text{rate = } \bm{t}_{(-i)i}/2)\text{ }\Ind(0< \bm{m}_{(-i)i}<1)$
			\Comment{sample $m$}
			
			\State Save updated $\bOmega$
			\State $\bSigma_{(-i)(-i)}=\bOmega_{(-i)(-i)}^{-1}+(\bOmega_{(-i)(-i)}^{-1}\bm{\beta})(\bOmega_{(-i)(-i)}^{-1}\bm{\beta})'/\gamma,\, \bm{\sigma}_{(-i)i}=-(\bOmega_{(-i)(-i)}^{-1}\bm{\beta})/\gamma,\, {\sigma}_{ii}=1/\gamma$
			\State Save updated $\bSigma, \bm{T},\bm{M}$.				
			\EndFor
			\State Sample $\tau,\text{ }\xi$ as in (\ref{scale_mix_cauchy_sampler}). \Comment{Sample $\tau,\text{ }\xi$}
			\EndFor
			
			\State Return MC samples $\bOmega$
			
			\EndFunction
		\end{algorithmic}
	\end{algorithm}
	
\section{Theoretical Properties}
	\subsection{Posterior Concentration Results}
	\label{sec:incons}
	In this section, we present our main result on the posterior contraction rate of the precision matrix $\bOmega$ around the true precision matrix $\bOmega_0$ with respect to the Frobenius norm under the graphical horseshoe-like prior. The technique of our proofs uses the general theory on posterior convergence rates as outlined in \cite{ghosal2000}, which establishes the desired convergence with respect to the Hellinger distance. However, from the perspective of a precision matrix, the Frobenius norm is easier to interpret in comparison to the Hellinger distance. Under suitable assumptions on the eigenspace of the precision matrices, \cite{banerjee2015bayesian} showed that these two distances are equivalent, and hence the same posterior contraction rates hold with respect to the Frobenius norm as well. We assume that the true underlying graph is sparse, so that the corresponding true precision matrix $\bOmega_0$ has $s$ non-zero off-diagonal elements. The total number of non-zero elements in $\bOmega_0$ is $p+s$, which gives the effective dimension of the parameter $\bOmega_0$. To establish the desired posterior concentration results, we shall need to control both the actual dimension and the effective dimension of the true precision matrix. Overall, our theoretical analyses depend on certain assumptions on the true precision matrix, the dimension and sparsity, and the prior space. We present the details of these assumptions along with relevant discussions below.
	
	\begin{assumption}\label{ass:prior_space}
		The prior is restricted to a subspace of symmetric positive definite matrices, $\mathcal{M}_p^+(L)$, where  
		\begin{equation}
			\label{eqn:prior-class}
			\mathcal{M}^+_p(L) = \{\bOmega \in \mathcal{M}_p^+: 0 < L^{-1} \leq \eig_1(\bOmega) \leq \cdots \leq \eig_p(\bOmega) \leq L < \infty\}.
		\end{equation}
		
	\end{assumption}
	
	\begin{assumption}\label{ass:1}
		The actual dimension $p$ satisfies the condition $p = n^b,\,b \in (0,1)$, and the effective dimension $p+s$ satisfies $(p+s)\log p/n = o(1).$
	\end{assumption}
	
	\begin{assumption}\label{ass:2}
		The true precision matrix $\bOmega_0$ belongs to the parameter space given by
		\begin{equation*}
			\mathcal{U}(\varepsilon_0, s) = \{\bOmega \in \mathcal{M}_p^+: \sum_{1\leq i < j \leq p}\Ind(\omega_{ij} \neq 0) \leq s, \, 0 < \varepsilon_0^{-1} \leq \eig_1(\bOmega) \leq \cdots \leq \eig_p(\bOmega) \leq \varepsilon_0 < \infty\}.
		\end{equation*} 
	\end{assumption}

	\begin{assumption}\label{ass:3}
		The bound $[L^{-1}, L]$ on the eigenvalues of $\bOmega$ as specified in (\ref{eqn:prior-class}) satisfies $L > \varepsilon_0$, or, in other words, $\varepsilon_0 = cL,$ for some $c \in (0, 1).$
	\end{assumption}

	\begin{assumption}
		\label{ass4}
		The global shrinkage parameter $a$ satisfies the condition, $a^{1/2} < n^{-1/2}p^{-b_1}$, for some constant $b_1 > 0.$
	\end{assumption}
	
	The condition on the eigenvalues of $\bOmega$ as specified in Assumption~\ref{ass:prior_space} is necessary for arriving at the theoretical results involving the posterior convergence rate of $\bOmega$. In this paper, we assume that $L$ is a fixed constant, which can be large. However, this condition does not affect the practical implementation of our proposed method, and is used purely as a technical requirement, so that we only can work with $\bOmega$ and $\bSigma$ that are away from singular matrices. Beyond this, no structural assumptions such as decomposability are placed on either $\bOmega$ or $\bSigma$. Similar assumptions have been made in related works; see \cite{liu2019empirical} and \cite{lee2021beta}. Assumption~\ref{ass:1} implies that the dimension grows to infinity as the sample size $n \rightarrow \infty$, but at a slower rate than $n$. Additionally, the condition on the effective dimension ensures that the posterior convergence rate goes to zero as $n \rightarrow \infty$. Similar conditions are necessary in proving the contraction results in other related works, for example, see \cite{banerjee2015bayesian, liu2019empirical, lee2021beta}. Assumption~\ref{ass:2} implies that the true precision matrix $\bOmega_0$ is sparse, and has eigenvalues that are bounded away from zero or infinity. Similar conditions are common in the literature on large precision matrix estimation problems; see, for example, \cite{banerjee2014posterior,banerjee2015bayesian, liu2019empirical, lee2021beta}; among others. Assumption~\ref{ass:3} is crucial in learning the precision matrix in a high-dimensional framework. This condition ensures that $\bOmega_0 \in \mathcal{M}_p^+(L)$, that is, the prior space contains the true precision matrix, which is necessary in efficient learning of the same. Assumption~\ref{ass4} ensures that the prior puts sufficient mass around the true zero elements in the precision matrix. The condition on the global scale parameter $a$ is a sufficient one, and is required to obtain the desired posterior convergence rate. We present the main theoretical result for posterior convergence now. A proof can be found in Appendix~\ref{app-main-post-conv}.
	\begin{theorem}
		Let $\bX^{(n)} = (\bX_1,\ldots,\bX_n)^T$ be a random sample from a $p$-dimensional normal distribution with mean $\bm{0}$ and covariance matrix $\bSigma_0 = \bOmega_0^{-1},$ where $\bOmega_0 \in \mathcal{U}(\varepsilon_0, s)$. Consider the prior specification as given by (\ref{prior_2}). Under the assumptions on the prior as given in Assumptions (\ref{ass:prior_space})--(\ref{ass4}), the posterior distribution of $\bOmega$ satisfies
		\begin{equation}
			\mathbb{E}_0\left[\Pr\{\|\bOmega - \bOmega_0\|_2 > M\epsilon_n \mid \bX^{(n)} \} \right] \rightarrow 0,\nonumber
		\end{equation} 
		for $\epsilon_n = n^{-1/2}(p+s)^{1/2}(\log p)^{1/2}$ and a sufficiently large constant $M > 0.$
		\label{thm:main-post-conv}
	\end{theorem}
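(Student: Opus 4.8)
The plan is to invoke the general posterior contraction machinery of \citet{ghosal2000} for the Hellinger metric $h$ on the joint densities $p_{\bOmega}$ of $\bX^{(n)}$, and then transfer the resulting rate to the Frobenius norm on $\bOmega$ using the norm equivalence of \citet{banerjee2015bayesian}, which applies precisely because Assumptions~\ref{ass:prior_space}, \ref{ass:2} and \ref{ass:3} confine both the prior support and the truth to the eigenvalue-bounded class $\mathcal{M}_p^+(L)$. Concretely, I would verify the three standard conditions for $\epsilon_n = n^{-1/2}(p+s)^{1/2}(\log p)^{1/2}$, which satisfies $n\epsilon_n^2 = (p+s)\log p \to \infty$ by Assumption~\ref{ass:1}: (i) a prior-mass/Kullback--Leibler condition $\Pi\{\bOmega : \KL(p_{\bOmega_0},p_{\bOmega}) \le \epsilon_n^2,\ V(p_{\bOmega_0},p_{\bOmega}) \le \epsilon_n^2\} \ge \exp(-c_1 n\epsilon_n^2)$; (ii) existence of sieves $\Fcal_n \subseteq \mathcal{M}_p^+(L)$ with entropy $\log N(\epsilon_n, \Fcal_n, h) \lesssim n\epsilon_n^2$; and (iii) a negligible-complement bound $\Pi(\Fcal_n^c) \le \exp(-c_2 n\epsilon_n^2)$ with $c_2$ exceeding the constant in (i). These together yield $\E_0[\Pr\{h(p_{\bOmega},p_{\bOmega_0}) > M\epsilon_n \mid \bX^{(n)}\}] \to 0$, and the equivalence $h(p_{\bOmega},p_{\bOmega_0}) \asymp \|\bOmega - \bOmega_0\|_2$ on $\mathcal{M}_p^+(L)$ then delivers the stated Frobenius-norm conclusion.

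For condition (i) I would first use that, for zero-mean Gaussians with precisions in $\mathcal{M}_p^+(L)$, both $\KL(p_{\bOmega_0},p_{\bOmega})$ and the variation term $V$ are bounded above by a multiple (depending on $L$) of $\|\bOmega - \bOmega_0\|_2^2$, so it suffices to lower bound $\Pi(\|\bOmega-\bOmega_0\|_2 \le c\epsilon_n)$. Splitting the off-diagonal coordinates into the $s$ true signals and the $\binom{p}{2}-s$ true zeros, I would allocate a Frobenius budget of order $\epsilon_n/\sqrt{p+s}$ per signal and $\epsilon_n/p$ per zero. The closed-form density $\pi(\omega_{ij};a) = \log(1+a/\omega_{ij}^2)/(2\pi a^{1/2})$ makes both pieces explicit: near a fixed nonzero $\omega_{0,ij}$ the density is of order $a^{1/2}/\omega_{0,ij}^2$, giving each signal-ball a log-probability of order $-\log(1/(\epsilon_n a^{1/2}))$, while the sharp spike at the origin makes $\Pi(|\omega_{ij}| \le \epsilon_n/p)$ close to one once $\epsilon_n/p \gg a^{1/2}$, which holds under Assumption~\ref{ass4}. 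Summing the $s$ signal contributions and the $\binom{p}{2}$ near-zero contributions and checking the total is $\gtrsim -n\epsilon_n^2$ is exactly where the upper bound on $a$ in Assumption~\ref{ass4} is consumed; the availability of the closed form (and the origin/tail rates it shares with the horseshoe) is what reduces these to elementary one-dimensional integral bounds.

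The construction of the sieve and the control of its complement form the core of the argument, and I expect this to be the main obstacle, since the continuous shrinkage prior places no exact mass at zero and the full eigenvalue-bounded class has entropy of order $p^2\log(1/\epsilon_n) \gg n\epsilon_n^2$. The remedy is to manufacture sparsity in a weak sense through a thresholded sieve
\begin{equation*}
\Fcal_n = \Bigl\{\bOmega \in \mathcal{M}_p^+(L) : \#\{(i,j): i<j,\ |\omega_{ij}| > \delta_n\} \le \bar{s}\Bigr\},
\end{equation*}
with $\delta_n$ of order $\epsilon_n/p$ and $\bar{s}$ of order $s$. On $\Fcal_n$ the sub-threshold entries contribute at most $p\,\delta_n \lesssim \epsilon_n$ to the Frobenius norm, so only the $p$ diagonal and at most $\bar s$ supra-threshold entries must be covered over the bounded range dictated by $\mathcal{M}_p^+(L)$; this gives $\log N(\epsilon_n,\Fcal_n,h) \lesssim (p+\bar s)\log(Lp/\epsilon_n) \lesssim (p+s)\log p = n\epsilon_n^2$, using Assumption~\ref{ass:1} to identify $\log(Lp/\epsilon_n) \asymp \log p$. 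For the complement I would control the number of supra-threshold off-diagonals by a binomial-type tail: each off-diagonal has prior tail mass $\Pi(|\omega_{ij}| > \delta_n)$ of order $a^{1/2}/\delta_n$, and a Chernoff bound on the (nearly independent) count yields $\Pi(\Fcal_n^c) \le \exp(-c_2 n\epsilon_n^2)$ once $\bar s$ dominates the expected count $\binom{p}{2} a^{1/2}/\delta_n$, once more controlled by Assumption~\ref{ass4}.

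A technical point threaded through both (i) and (iii) is that the operative prior is the product prior of (\ref{prior_2}) \emph{conditioned} on the positive-definite cone (indeed on $\mathcal{M}_p^+(L)$), so the coordinates are not exactly independent. I would handle this by bounding restricted probabilities in terms of the unrestricted product measure divided by the normalizing mass $\Pi_{\mathrm{prod}}(\mathcal{M}_p^+(L))$, and arguing this constant is bounded away from zero rather than exponentially small, so that it is absorbed into $c_1$ and $c_2$; the independence-based binomial and small-ball estimates above are then applied to the product measure. Assembling conditions (i)--(iii) in \citet{ghosal2000} and invoking the Hellinger--Frobenius equivalence of \citet{banerjee2015bayesian} completes the proof.
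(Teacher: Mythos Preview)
Your overall architecture is exactly the paper's: the three Ghosal--Ghosh--van der Vaart conditions, a thresholded ``effective sparsity'' sieve, a binomial tail bound on the count of supra-threshold off-diagonals, and the Hellinger--Frobenius equivalence of \citet{banerjee2015bayesian}. The paper's choices are $\delta_n = \epsilon_n/p^\nu$ and $\bar r_n \asymp n\epsilon_n^2/\log n \asymp p+s$ (not merely of order $s$; you need $\bar r_n$ at least of this order for the Chernoff step to beat $\exp(-cn\epsilon_n^2)$ when $s$ is small), and the binomial tail is handled via the Song--Liang normal approximation rather than a raw Chernoff bound, but these are cosmetic differences.

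The one genuine gap is your treatment of the positive-definite normalizing constant. You write that $\Pi_{\mathrm{prod}}(\mathcal{M}_p^+(L))$ is ``bounded away from zero rather than exponentially small,'' but in the growing-$p$ regime this is not true in general and the paper does not claim it. For the prior-mass condition (i) the issue is harmless, since the KL ball around $\bOmega_0$ lies inside $\mathcal{M}_p^+(L)$ and conditioning can only \emph{increase} that probability; the paper simply notes this and proceeds with the product measure. For the sieve-complement condition (iii), however, the normalizing constant appears in the \emph{denominator}, and an uncontrolled exponentially small constant would destroy the bound. The paper resolves this with a separate lemma based on the Gershgorin circle theorem: it lower-bounds $\Pi_{\mathrm{prod}}(\mathcal{M}_p^+(L))$ by the product-prior probability that all off-diagonals are below $(Lp)^{-1}$ and all diagonals lie in a fixed interval, which under Assumption~\ref{ass4} yields a bound of the form $\exp(-C p/\sqrt{n})$. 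This is indeed exponentially small, but its exponent is $o(n\epsilon_n^2)$, so it is absorbed into the constant $c_4$ in $\Pi(\Fcal_n^c) \le \exp(-c_4 n\epsilon_n^2)$. Replacing your ``bounded away from zero'' claim with this Gershgorin step would make your argument complete and essentially identical to the paper's.
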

	\begin{corollary}
		\label{cor-post-conv}
		Under similar conditions as in Theorem~\ref{thm:main-post-conv} above, the posterior distribution of $\bOmega$ has the posterior convergence rate $\epsilon_n = n^{-1/2}(p+s)^{1/2}(\log p)^{1/2}$ around $\bOmega_0$ with respect to the Frobenius norm under the graphical horseshoe prior as specified in (\ref{hs_prior}).
	\end{corollary}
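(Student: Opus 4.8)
The plan is to treat this as a genuine corollary by tracing which ingredients of the proof of Theorem~\ref{thm:main-post-conv} actually depend on the functional form of the prior density, and which depend only on the Gaussian sampling model and the common restriction to $\mathcal{M}_p^+(L)$. Following the general machinery of \cite{ghosal2000}, posterior contraction at rate $\epsilon_n$ follows from three conditions: (i) a metric-entropy bound on a sieve, (ii) the existence of exponentially powerful tests separating $\bOmega_0$ from Frobenius balls, and (iii) a prior-mass (Kullback--Leibler) lower bound. Since the graphical horseshoe prior of (\ref{hs_prior}) is, under the ``similar conditions'' of the corollary, likewise confined to $\mathcal{M}_p^+(L)$, conditions (i) and (ii) are determined solely by the geometry of $\mathcal{M}_p^+(L)$ and the normal likelihood and therefore carry over verbatim from the proof of Theorem~\ref{thm:main-post-conv}; the equivalence of the Frobenius and Hellinger metrics on this set, established by \citet{banerjee2015bayesian}, again lets us state the conclusion in Frobenius norm. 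Thus everything reduces to re-verifying the prior-mass condition (iii) for the horseshoe.

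The key device for (iii) is a density comparison. Writing $p_{\mathrm{HS}}(\cdot\,;\tau)$ for the marginal horseshoe density of an off-diagonal entry at global scale $\tau$, the analytic bounds of \citet{carvalho2010horseshoe} give, for a universal constant $K$,
\begin{equation*}
	\frac{K}{2\tau}\,\log\!\left(1+\frac{4\tau^2}{\omega^2}\right) \;\le\; p_{\mathrm{HS}}(\omega;\tau) \;\le\; \frac{K}{\tau}\,\log\!\left(1+\frac{2\tau^2}{\omega^2}\right),
\end{equation*}
so that, upon identifying $a=\tau^2$, the horseshoe density is sandwiched between constant multiples of exactly the horseshoe-like density $\pi(\omega;a)=\log(1+a/\omega^2)/(2\pi a^{1/2})$ appearing in (\ref{prior_2}). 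Consequently the two priors share the same logarithmic blow-up at the origin and the same $\omega^{-2}$ polynomial tail, and every per-coordinate prior-mass estimate used in the proof of Theorem~\ref{thm:main-post-conv} --- the lower bound on the mass that heavy tails place near each of the $s$ true nonzero values, and the concentration of mass within an $O(\epsilon_n/(p+s)^{1/2})$ neighborhood of zero for the remaining coordinates --- transfers to the horseshoe up to multiplicative constants that are absorbed into the exponent $e^{-cn\epsilon_n^2}$.

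The one genuinely new element is the treatment of the global scale: Theorem~\ref{thm:main-post-conv} fixes $a$ so that Assumption~\ref{ass4} holds, whereas (\ref{hs_prior}) places a half-Cauchy hyperprior on $\tau$. I would restrict to the event $\{\tau \le n^{-1/2}p^{-b_1}\}$, on which Assumption~\ref{ass4} holds for every realized scale, apply the conditional density comparison above to obtain the per-coordinate bounds, and then reintroduce the factor $\Pr(\tau \le n^{-1/2}p^{-b_1})$. Because $\tau\sim\mathcal{C}^{+}(0,1)$ assigns this event probability of order $n^{-1/2}p^{-b_1}$ --- polynomially, not exponentially, small --- the product still exceeds $e^{-cn\epsilon_n^2}$, and the prior-mass condition survives. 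I expect this accounting to be the main obstacle: because the horseshoe marginal has no closed form, the concentration-near-zero estimate for the many null coordinates must be pushed entirely through the sandwich bounds and then integrated against the heavy-tailed half-Cauchy, and one must check that neither the lower tail of $\tau$ (needed for concentration at zero) nor its upper tail (which could inflate mass on large entries) disturbs the rate --- the latter being controlled precisely by the eigenvalue cap of $\mathcal{M}_p^+(L)$.
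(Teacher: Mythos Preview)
Your core device---the two-sided sandwich of \citet{carvalho2010horseshoe} between the horseshoe and horseshoe-like densities---is exactly what the paper exploits. In the paper's argument the only prior-specific ingredients in the proof of Theorem~\ref{thm:main-post-conv} are Lemma~\ref{lemma:prior-prob} (the tail probability bounds) and Lemma~\ref{lemma:prior-constraint} (the lower bound on $\Pi(\bOmega\in\mathcal{M}_p^+(L))$); the paper simply re-derives both for the horseshoe via the upper bound $p_{\mathrm{HS}}(\theta\mid a)\lesssim a^{-1/2}\log(1+2a/\theta^2)$, which yields $\int_{|\theta|>t}p_{\mathrm{HS}}\,d\theta\lesssim a^{1/2}/t$ and hence the analogues of \eqref{eqn:lemma-prior-prob-1}--\eqref{eqn:lemma-prior-prob-2} and \eqref{eqn:constrined-omega-final}. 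So on the central idea you and the paper agree.

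Two points of divergence are worth noting. First, your enumeration of the \cite{ghosal2000} conditions is slightly off: you list entropy and tests as the prior-free pieces, but the third required condition is that $\Pi(\mathcal{P}_n^c)\le e^{-c'n\epsilon_n^2}$, and this \emph{is} prior-dependent---in the paper it uses both parts of Lemma~\ref{lemma:prior-prob} and the constraint bound of Lemma~\ref{lemma:prior-constraint}. Your sandwich inequality in fact delivers exactly the tail estimates needed here too, but you should not claim this piece ``carries over verbatim'' without re-checking it; in particular the Binomial/Bernstein argument for $\Pi^*(N\ge\bar r_n+1)$ rests on $\nu_n=\Pr(|\omega_{ij}|>\delta_n)\le p^{-b_1'}$, which is a prior statement.

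Second---and this is the larger discrepancy---the paper reads ``similar conditions as in Theorem~\ref{thm:main-post-conv}'' to include Assumption~\ref{ass4}, i.e.\ it treats the global scale $\tau^2=a$ as \emph{fixed} at a level $a^{1/2}<n^{-1/2}p^{-b_1}$, rather than drawn from the half-Cauchy in \eqref{hs_prior}. With that reading, the entire half-Cauchy integration you flag as the ``main obstacle'' disappears, and the proof is a line-by-line transcription of Theorem~\ref{thm:main-post-conv} with Lemma~\ref{lemma:prior-prob} and Lemma~\ref{lemma:prior-constraint} replaced by their horseshoe analogues. Your hyperprior argument is a genuine extension beyond what the paper proves; the restriction-to-$\{\tau\le n^{-1/2}p^{-b_1}\}$ trick works for the prior-mass lower bound, but for the sieve-complement \emph{upper} bound you cannot simply restrict $\tau$---you would need to integrate the tail bound $a^{1/2}/t$ against the half-Cauchy, and since $\int_0^\infty \tau(1+\tau^2)^{-1}d\tau$ diverges, the eigenvalue cap on $\mathcal{M}_p^+(L)$ really is doing essential work there, as you suspected.
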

	A proof of Corollary~\ref{cor-post-conv} is in Supplementary Section ~\ref{app-cor-post-conv} and settles the question of posterior concentration for the graphical horseshoe which \citet{li2019graphical} did not address. The posterior convergence rate above directly compares with the rate of convergence of the frequentist graphical lasso estimator \citep{rothman2008sparse}, and is identical to the posterior convergence rates obtained by \cite{banerjee2015bayesian} and \citet{liu2019empirical}. However, our work is the first to address \emph{unstructured precision matrices}, apart of a mild assumption of sparsity, using computationally efficient \emph{continuous shrinkage priors}. This is at a contrast with previous theoretical analyses that imposed restrictive assumptions such as decomposability.
	\subsection{Properties of the MAP Estimator}
	\label{subsec:MAP-theory}
	
	The MAP estimator of $\bOmega$ can be found by maximizing the following objective function:
	\begin{eqnarray}
		\label{eq:q-omega}
		Q(\bOmega) = \log \pi(\bOmega \mid \bX^{(n)}) & = & \ell(\bOmega) + \sum_{i,j: i < j} \log \pi(\omega_{ij} \mid a) + C  \nonumber \\
		& = & \dfrac{n}{2}\left(\log \det{\bOmega}- \tr(\bmS\bOmega) \right) - \sum_{i, j: i < j} pen_{a}(\omega_{ij}) + C, 
	\end{eqnarray}
	where, $pen_{a}(\omega) = -\log \log ( 1 + a/\omega^2), \; a > 0,$ is the horseshoe-like penalty. We start by proving $pen_{a}(\omega)$ is strictly concave in the following lemma, with a proof in Supplementary Section~\ref{app-lemma-concave}. 
	\begin{lemma}\label{lemma:concave}
		The extended real-valued penalty function $pen_{a}(x) = -\log \log ( 1 + a/x^2), \; a > 0,$ is strictly concave for all $x \in dom(pen_{a})$, separately for $x > 0$ and $x<0$. 
	\end{lemma}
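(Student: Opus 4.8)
The plan is to reduce the claim to a single scalar inequality and verify it by elementary calculus. Since $pen_{a}(x)$ depends on $x$ only through $x^2$, it is an even function, so it suffices to establish strict concavity on $x > 0$; the statement for $x < 0$ then follows by the reflection $x \mapsto -x$. Throughout I would write $g(x) = \log(1 + a/x^2) = \log(x^2 + a) - 2\log x$, so that $pen_{a}(x) = -\log g(x)$, noting that $g(x) > 0$ for all $x > 0$ and $a > 0$, which keeps $pen_a$ finite on its domain $\Reals \setminus \{0\}$.

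The first observation is that $pen_{a} = -\log g$ is strictly concave exactly when $\log g$ is strictly convex, that is, when $g$ is strictly log-convex. Since $(\log g)'' = (g'' g - (g')^2)/g^2$ and $g^2 > 0$, the entire problem reduces to showing the strict inequality $g''(x)\, g(x) - (g'(x))^2 > 0$ for every $x > 0$.

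Next I would compute the two derivatives in closed form. A direct differentiation gives $g'(x) = -2a/[x(x^2+a)]$ and $g''(x) = 2a(3x^2+a)/[x^2(x^2+a)^2]$. Substituting these into $g'' g - (g')^2$ and factoring out the common positive quantity $2a/[x^2(x^2+a)^2]$ collapses the target inequality to $(3x^2 + a)\log(1 + a/x^2) > 2a$. Introducing the scale-free variable $u = a/x^2 > 0$ then cancels the factor $x^2$ on both sides and leaves the clean one-variable inequality $h(u) := (3 + u)\log(1+u) - 2u > 0$ for all $u > 0$.

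The crux of the proof is this last inequality, and it is where I expect the only genuine work. The plan is a boundary-plus-monotonicity argument: $h(0) = 0$, and I would show $h'(u) > 0$ for all $u > 0$. Computing $h'(u) = \log(1+u) + 2/(1+u) - 1$ and $h''(u) = (u-1)/(1+u)^2$, one sees that $h'$ decreases on $(0,1)$ and increases on $(1,\infty)$, so $h'$ attains its global minimum at $u = 1$, where $h'(1) = \log 2 > 0$. Hence $h' > 0$ throughout, which together with $h(0) = 0$ yields $h(u) > 0$ for $u > 0$, establishing the reduced inequality and therefore strict concavity. The main subtlety to watch is purely bookkeeping: keeping the sign of $g'$ and the positivity of $g$ straight, so that the equivalence between concavity of $pen_{a}$ and log-convexity of $g$ is invoked correctly.
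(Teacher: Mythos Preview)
Your proof is correct and follows essentially the same route as the paper: both compute the second derivative, reduce the sign question to the scalar inequality $(a+3x^2)\log(1+a/x^2) > 2a$, and then verify it after a one-variable substitution. The only cosmetic difference is that the paper substitutes $z = \log(1+a/x^2)$ and argues algebraically via $\exp(z) > 1+z$, whereas your substitution $u = a/x^2$ and the minimum-of-$h'$ analysis (yielding $h'(1)=\log 2>0$) is a cleaner and fully rigorous verification of the same inequality.
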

	
	A direct consequence of Lemma \ref{lemma:concave} is as follows. Let $\bOmega^{(t)}$ be the $t$th iterate of a local linear approximation (LLA) algorithm \citep{zou2008one}, that is, 
	\[
	\bOmega^{(t+1)} = \argmax \left\{ \ell(\bOmega) -  \sum_{i, j: i < j} pen'_a(\lvert \omega_{ij}^{(t)}\rvert ) ~ \lvert \omega_{ij} \rvert \right\}, \quad t = 1, 2, \ldots.
	\]
	Then Theorem 1 of \citet{zou2008one}, together with the strict concavity of horsehoe-like penalty function from Lemma \ref{lemma:concave}, guarantees that the LLA algorithm will satisfy an ascent property, that is, $Q(\bOmega^{(t+1)}) > Q(\bOmega^{(t)})$, and hence the LLA algorithm will be a special case of minorize--maximize algorithms. We now present the result on consistency of the MAP estimate using the graphical horseshoe-like prior via an ECM algorithm, with a proof in Appendix~\ref{app-map}.
	\begin{theorem}
		\label{thm:MAP}
		Under the conditions of Theorem~\ref{thm:main-post-conv}, the MAP estimator of $\bOmega$, given by $\hat{\bOmega}^{\mathrm{MAP}}$ is consistent, in the sense that
		\begin{equation*}
			\|\hat{\bOmega}^{\mathrm{MAP}} - \bOmega_0\|_2 = O_P(\epsilon_n),
		\end{equation*}
		where $\epsilon_n$ is the posterior convergence rate as defined in Theorem~\ref{thm:main-post-conv}.
	\end{theorem}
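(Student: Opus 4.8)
The plan is to adapt the penalized-likelihood containment argument of \citet{rothman2008sparse} and \citet{lam2009sparsistency} to the non-convex horseshoe-like penalty, using Lemma~\ref{lemma:concave} as the device that tames the penalty. The essential preliminary point is that $pen_a(\omega)\to-\infty$ as $\omega\to0$, so the raw log-posterior $Q$ attains the value $+\infty$ at \emph{every} positive definite matrix with a vanishing off-diagonal entry, in particular at $\bOmega_0$ itself; the unconstrained global maximizer is therefore degenerate, and $\hat{\bOmega}^{\mathrm{MAP}}$ must be read as the non-degenerate stationary point produced by the ECM/LLA recursion. I would accordingly analyze the convex surrogate that the local linear approximation generates: since $pen_a$ is strictly concave on each half-line, it is majorized by its tangent, and maximizing the resulting minorant of $Q$ amounts to solving a weighted-$\ell_1$-penalized Gaussian likelihood $\ell(\bOmega)-\sum_{i<j}w_{ij}|\omega_{ij}|$ with nonnegative weights $w_{ij}=pen_a'(|\omega_{ij}^{(t)}|)$. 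This surrogate is concave, so a containment argument centred at $\bOmega_0$ is now legitimate (the surrogate penalty is finite at $\bOmega_0$), and Theorem~1 of \citet{zou2008one}, applicable precisely because of the concavity in Lemma~\ref{lemma:concave}, supplies the ascent property $Q(\bOmega^{(t+1)})>Q(\bOmega^{(t)})$ that transfers a rate bound for the surrogate maximizer to the MAP.

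First I would set $\bDelta=\bOmega-\bOmega_0$ and expand $\ell$ to second order. The quadratic part of $\log\det$ equals $-\tfrac{n}{4}\tr(\bSigma_0\bDelta\bSigma_0\bDelta)$ up to a higher-order remainder, and the eigenvalue bounds from $\bOmega_0\in\mathcal{U}(\varepsilon_0,s)$ (Assumption~\ref{ass:2}) give $\tr(\bSigma_0\bDelta\bSigma_0\bDelta)\geq\eig_1(\bSigma_0)^2\|\bDelta\|_F^2\geq c_1\|\bDelta\|_F^2$, a strictly negative-definite contribution of order $n\|\bDelta\|_F^2$. The random linear term $\tfrac{n}{2}\tr\{(\bmS-\bSigma_0)\bDelta\}$ I would control through the Gaussian concentration $\max_{i,j}|S_{ij}-\Sigma_{0,ij}|=O_P(\sqrt{\log p/n})$ together with a Cauchy--Schwarz bound over the effective support, obtaining a bound of order $n\epsilon_n\|\bDelta\|_F$; on the sphere $\|\bDelta\|_F=M\epsilon_n$ this is dominated by the quadratic $c_1 n M^2\epsilon_n^2$ once $M$ is large.

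The heart of the argument is the weight/penalty analysis, split over the signal set $\mathcal{S}=\{(i,j):i<j,\ \omega_{0,ij}\neq0\}$, of cardinality $s$, and its complement. A direct computation gives $pen_a'(\omega)=2a\{\omega(\omega^2+a)\log(1+a/\omega^2)\}^{-1}$, which under Assumption~\ref{ass4} (so that $a\to0$) and the entrywise bounds implied by $\mathcal{U}(\varepsilon_0,s)$ converges to $2/\omega=O(1)$ on $\mathcal{S}$; hence the signal weights are uniformly bounded and the signal penalty difference is at most $C\|\bDelta_{\mathcal{S}}\|_1\leq C\sqrt{s}\,\|\bDelta\|_F$, an additive term of order $\sqrt{s}\,\epsilon_n$ that is negligible against $n\epsilon_n^2=(p+s)\log p$. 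On the null coordinates the corresponding difference is $-w_{ij}|\Delta_{ij}|\leq0$, which only aids negativity; the crucial point is that the weights there must dominate the noise level $\sqrt{\log p/n}$ so as to absorb the many small cross terms in the linear piece, and it is exactly Assumption~\ref{ass4} on the smallness of $a$ that makes $pen_a'(|\omega_{ij}^{(t)}|)$ large enough at near-zero coordinates while keeping the effective support of the surrogate maximizer of size $p+s$, thereby legitimizing the support-restricted Cauchy--Schwarz step above. Assembling the three bounds, the surrogate objective difference on the sphere is at most $-c_1 n M^2\epsilon_n^2+O(n M\epsilon_n^2)+o(n\epsilon_n^2)$, strictly negative for all large $M$ with probability tending to one.

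By concavity of the surrogate objective, negativity on the sphere forces its maximizer into the ball $\{\|\bDelta\|_F\leq M\epsilon_n\}$; the ascent property then places $\hat{\bOmega}^{\mathrm{MAP}}$ in the same ball, giving $\|\hat{\bOmega}^{\mathrm{MAP}}-\bOmega_0\|_2=O_P(\epsilon_n)$. I expect the main obstacle to be the non-convexity of $pen_a$ combined with its pole at the origin, which renders the global MAP ill-defined and blocks any direct convex containment argument; the delicate work is therefore twofold: invoking Lemma~\ref{lemma:concave} to pass to a tractable concave surrogate carrying the ascent guarantee, and verifying through the smallness of $a$ in Assumption~\ref{ass4} that the null coordinates neither inflate the effective dimension nor the rate --- the same role that Assumption~\ref{ass4} plays in the posterior-mass calculations underlying Theorem~\ref{thm:main-post-conv}.
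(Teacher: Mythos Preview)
Your backbone---the Frobenius-sphere containment argument, the second-order expansion of $\log\det$, and the concentration bound on $\max_{i,j}|S_{ij}-\Sigma_{0,ij}|$---matches the paper exactly; the paper labels the three pieces I, II, III and bounds I and II essentially as you do. The divergence is in III. The paper does \emph{not} pass to an LLA surrogate: it works with $Q$ directly, splitting the penalty difference over the true-zero index set and its complement (beware that the paper's $\mathcal{S}$ is the \emph{zero} set, opposite to your convention). On the true zeros it simply notes $p_n(\delta_{ij})-p_n(0)>0$ because $pen_a(\theta)\to-\infty$ at the origin, so this piece only helps and is discarded. On the true nonzeros it Taylor-expands $p_n$ about $\omega_{ij,0}\neq0$ and bounds the derivatives explicitly: $|pen_a'(\omega)|<2/|\omega|$ via the elementary inequality $(1+x)\log(1+x)>x$, and $|pen_a''(\omega)|\leq C_3/\omega^2$ with $C_3$ free of $a$, obtained by maximizing the numerator of $pen_a''$ in closed form. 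After the $2/n$ scaling, the nonzero contribution is $O(n^{-1}\sqrt{s}\,\|\bDelta\|_2)+O(n^{-1}\|\bDelta\|_2^2)=o(\epsilon_n^2)$, negligible against II. The paper therefore needs neither the surrogate nor the smallness of $a$ in Assumption~\ref{ass4} for term III; your assertion that Assumption~\ref{ass4} is what controls the null coordinates in the MAP proof is not how the paper argues.

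Your concern that $Q(\bOmega_0)=+\infty$ renders the boundary comparison vacuous is well taken, and the paper's argument indeed delivers only the standard ``a local maximizer exists in the ball'' conclusion rather than confinement of any specific algorithmic output. Your LLA route is designed to repair this, but the transfer step has a gap. The ascent property $Q(\bOmega^{(t+1)})>Q(\bOmega^{(t)})$ does not confine iterates to a Frobenius ball, precisely because $Q=+\infty$ on the set where some off-diagonal vanishes, so ascent imposes no metric control. More seriously, your weight analysis is circular: establishing that the surrogate maximizer lies in the ball requires $w_{ij}=pen_a'(|\omega_{ij}^{(t)}|)$ to be $O(1)$ on the signal set and large on the null set, which already presupposes $\bOmega^{(t)}$ close to $\bOmega_0$. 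A rigorous version would need a good warm start together with an inductive argument that all iterates remain in the ball, or a one-step LLA analysis from an initial $\epsilon_n$-consistent estimator in the style of \citet{zou2008one}; neither is supplied here.
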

	
	The above result guarantees that the MAP estimator also converges to the true precision matrix $\bOmega_0$ at the same rate as the posterior convergence rate in the Frobenius norm. By triangle inequality, Theorem~\ref{thm:main-post-conv} and Theorem~\ref{thm:MAP} together imply that $\|\bOmega - \hat{\bOmega}^{\mathrm{MAP}}\|_2 = O_P(\epsilon_n),$ so that the posterior probability of an $\epsilon_n$-neighborhood around the MAP estimator with respect to the Frobenius norm converges to zero. This pleasing correspondence between the fully Bayesian and MAP estimates under the same prior--penalty dual is far from guaranteed, in the face of possible contradictions pointed out by \citet{castillo2015bayesian} for the lasso in linear models.


	\section{Numerical Experiments}
	\label{sec:Simulation_study_1}
	We compare the MAP and MCMC estimates under the horseshoe-based methods (GHS, GHS-LIKE-MCMC and GHS-LIKE-ECM) with two frequentist approaches: GLASSO, GSCAD and one Bayesian approach: the Bayesian GLASSO (BGL). We consider two problem dimensions: $(n,p)= (120, 100)$ and $(120, 200)$. For each dimension,  we perform simulations under four different structures of the true precision matrix $\bOmega_0$ as in \citet{li2019graphical} and \citet{friedman2008glasso}. These are: Random, Hubs, Cliques positive and Cliques negative, as detailed below.
	\begin{enumerate}
		\item \emph{Random.} The off-diagonal entries of $\bOmega_0$ are non-zero with probability $0.01$ when $p=100$ and $0.002$ when $p=200$. The non-zero entries are then sampled uniformly from $(-1,-0.2)$.
		
		\item \emph{Hubs.} The rows/columns are partitioned into $K$ disjoint groups $G_1, \ldots, G_K$. The off-diagonal entries $\omega_{ij}^0$ are set to 0.25 if $i\neq j$ and $i,j\in G_k$ for $k = 1,\dots,K$. In our simulations we consider $p/10$ groups with equal number of elements in each group. 
		
		\item \emph{Cliques positive and Cliques negative.} Same as Hubs, except for setting all $\omega_{ij}^0$, $i\neq j$ and $i,j\in G_k$, we select 3 members within each group, $g_k \subset G_k$, and set $\omega_{ij}^0 = 0.75$, $i\neq j$ and $i,j\in g_k$ for `Cliques positive' and set $\omega_{ij}^0 = -0.45$, $i\neq j$ and $i,j\in g_k$ for `Cliques negative.' 
	\end{enumerate}
	For each setting of $(n,p)$ and $\bOmega_0$, we generate 50 data sets and estimate the precision matrices by the methods stated above. All three horseshoe based methods are implemented in MATLAB, GSCAD is as implemented by \citet{wang2012bayesian} and GLASSO is implemented in 
	R package `glasso' \citep{R:glasso}.  Starting points for GHS-LIKE-ECM are randomly chosen in order to avoid getting stuck in a local minimum (see details in Supplementary Section \ref{sec:simulation_study_2}) and its global shrinkage parameter is chosen as in Supplementary Section~\ref{global_scale_param_computation}. Tuning parameters for GLASSO and GSCAD are chosen by 5-fold cross validation. The middle $50\%$ posterior credible intervals are used for variable selection for the Bayesian approaches. We provide resuts on: Stein's loss ($={\tr(\hat{\bOmega}\bSigma_0) - \log\det (\hat{\bOmega}\bSigma_0) - p}$), Frobenius norm ($\text{F norm}=\norm{\hat\bOmega - \bOmega_0}_2$), true and false positive rates for detecting non-zero off-diagonal entries (resp., TPR and FPR), the Matthews Correlation Coefficient (MCC), and average CPU time. Note that for the fully Bayesian estimate, our theory concerns posterior concentration properties, and connections with convergence in Frobenius norm is established in \citet{banerjee2015bayesian}. However, for the sake of completeness and comparisons with point estimation approaches, we provide variable selection results for all approaches as well, in addition to Stein's loss (an empirical measure of Kullback--Leibler divergence) and F norm that focus more directly on the entire distribution.
	
	It can be clearly seen from Tables \ref{simulation_p_100_n_120_part_1}-- \ref{simulation_p_200_n_120_part_2} that the horseshoe based methods generally perform the best. GHS has the smallest Stein's loss in all settings expect in Hubs when $(n,p) = (120,100)$. This corroborates the finding of \citet{li2019graphical} that GHS results in improved Kullback--Leibler risk properties (of which Stein's loss is an empirical measure) when compared to prior densities that are bounded above at the origin, e.g., BGL, and it is apparent from both tables that BGL has the worst Stein's loss. For GHS-LIKE-ECM and MCMC, the measures of Stein's loss are generally close to that of GHS, and much better compared to the other competing methods. A similar pattern emerges in the results for F norm, with the horseshoe-based methods once again outperforming the competitors and performing similarly among themselves. It is worth noting, however, that the fully Bayesian approaches (GHS-LIKE--MCMC and GHS) generally result in the best statistical performance, at the expense of a considerably longer computing time, making the trade-off between fully Bayesian and penalized likelihood approaches apparent.

		\begin{table}[!t]
		\centering
		\caption{Mean (sd) Stein's loss, Frobenius norm, true positive rates and false positive rates, Matthews Correlation Coefficient of precision matrix estimates over 50 data sets generated by multivariate normal distributions with precision matrix $\bOmega_0$, where $n=120$ and $p=100$. The precision matrix is estimated by frequentist graphical lasso with penalized diagonal elements (GL1) and with unpenalized diagonal elements (GL2), graphical SCAD (GSCAD), Bayesian graphical lasso (BGL), the graphical horseshoe (GHS),  graphical horseshoe-like ECM (ECM) and graphical horseshoe-like MCMC (MCMC). The best performer in each row is shown in bold. Average CPU time is in seconds.
		}
		\label{simulation_p_100_n_120_part_1}
		\begin{footnotesize}
				\begin{tabular}{l rrrrrrr }
					\hline
					{ }& \multicolumn{7}{c}{Random}  \\
					{ }& \multicolumn{7}{c}{35 nonzero pairs out of 4950}  \\
					{ }& \multicolumn{7}{c}{nonzero elements $\sim -\mathrm{Unif}(0.2,1)$} \\
					& GL1 & GL2 & GSCAD & BGL & GHS & ECM & MCMC  \\
					\hline\\
					Stein's loss & 5.245 & 6.785 & 5.21 & 42.997 & \textbf{2.176} & 3.758 & 2.626\\
					& (0.254) & (0.464) & (0.242) & (0.898) & (0.278) & (0.282) & (0.317)\\
					
					F norm & 3.348 & 4.084 & 3.333 & 3.952 & \textbf{1.194} & 2.224 & 2.164\\
					& (0.115) & (0.143) & (0.117) & (0.139) & (0.144) & (0.108) & (0.16)\\
					
					TPR & 0.951 & 0.882 & \textbf{0.998} & 0.979 & 0.819 & 0.948 & 0.856\\
					& (0.03) & (0.038) & (0.009) & (0.023) & (0.041) & (0.032) & (0.036)\\
					
					FPR & 0.101 & 0.045 & 0.994 & 0.166 & \textbf{0.0005} &  0.071 & 0.003\\
					& (0.013) & (0.007) & (0.005) & (0.0007) & (0.0003) & (0.005) & (0.001)\\
						
					MCC & 0.232 & 0.321 & 0.005 & 0.181 & \textbf{0.869} & 0.275 & 0.776\\
					& (0.018) & (0.024) & (0.001) & (0.007) & (0.031) & (0.016) & (0.034) \\
					
					Avg CPU time & 4.988 & 4.719 & 53.977 & 550.422 & 252.84 & 5.94 & 538.929 \\

					\hline
					
					{ }& \multicolumn{7}{c}{Cliques negative}\\
					{ }& \multicolumn{7}{c}{30 nonzero pairs out of 4950} \\
					{ }& \multicolumn{7}{c}{nonzero elements = -0.45} \\
					& GL1 & GL2 & GSCAD & BGL & GHS & ECM & MCMC  \\
					\hline\\
					Stein's loss & 4.607 & 7.134 & 4.567 & 42.618 & \textbf{1.862} & 3.417 & 2.334\\
					& (0.223) & (0.529) & (0.231) & (0.896) & (0.263) & (0.251) & (0.307)\\
					
					F norm 	& 2.823 & 3.851 & 2.813 & 3.814 & \textbf{1.969} & 2.107 & 2.132\\
					& (0.117) & (0.138)  & (0.112) & (0.165) & (0.212) & (0.124) & (0.199) \\
					
					TPR & \textbf{1} & \textbf{1} & \textbf{1} & \textbf{1} & 0.983 & \textbf{1} & 0.988 \\
					& (0) & (0) & (0) & (0) & (.024) & (0) & (0.02)\\
					
					FPR & 0.1 & 0.028 &  0.983 & 0.158 & \textbf{0.0004} & 0.073 & 0.002  \\
					& (0.01) & (0.006) & (0.013) & (0.007) & (0.0003) & (0.005) & (0.001)\\
					
					MCC & 0.232 & 0.42 & 0.01 & 0.177 & \textbf{0.936} & 0.268 & 0.843 \\
					& (0.014) & (0.036) & (0.003) & (0.005) & (0.024) & (0.009) & (0.037) \\
					
					Avg CPU time & 2.962 & 3.2648 & 24.792 & 550.768 & 253.04 & 5.282 & 540.928 \\	   
					\hline
			\end{tabular}
		\end{footnotesize}
	\end{table}
    	\begin{table}[!t]
		\centering
		\caption{Mean (sd) Stein's loss, Frobenius norm, true positive rates and false positive rates, Matthews Correlation Coefficient of precision matrix estimates over 50 data sets generated by multivariate normal distributions with precision matrix $\bOmega_0$, where $n=120$ and $p=100$. The precision matrix is estimated by frequentist graphical lasso with penalized diagonal elements (GL1) and with unpenalized diagonal elements (GL2), graphical SCAD (GSCAD), Bayesian graphical lasso (BGL), the graphical horseshoe (GHS),  graphical horseshoe-like ECM (ECM) and graphical horseshoe-like MCMC (MCMC). The best performer in each row is shown in bold. Average CPU time is in seconds.
		}
		\label{simulation_p_100_n_120_part_2}
		\begin{footnotesize}
				\begin{tabular}{l rrrrrrr }
					\hline
					{ }&  \multicolumn{7}{c}{Hubs} \\
					{ }&\multicolumn{7}{c}{90	nonzero pairs out of 4950} \\
					{ }& \multicolumn{7}{c}{nonzero elements = 0.25} \\
					& GL1 & GL2 & GSCAD & BGL & GHS & ECM & MCMC  \\
					\hline\\
					Stein's loss & 5.255 & 6.328 & 5.213 & 43.042 & 5.101 &\textbf{4.22} & 5.310 \\
				    & (0.263)  & (0.414)  & (0.261) & (0.802) & (0.455) & (0.369) & (0.485)\\
					
					F norm & 3.018 & 3.432 & 3.003 & 4.295 & 2.544 & \textbf{2.415} & 2.687\\
					& (0.091) & (0.112) & (0.093) & (0.156) & (0.126) & (0.103) & (0.141) \\
					
					TPR & 0.995 & 0.986 & \textbf{0.998} & 0.995 & 0.872 & 0.985 & 0.754 \\		
					& (0.007) & (0.017) & (0.002) & (0.008) & (0.04) & (0.014) & (0.004)\\
					
					FPR & 0.101 & 0.045 &  0.983 & 0.186 & \textbf{0.003} & 0.062 & \textbf{0.003}\\
					& (0.016) & (0.008) & (0.012) & (0.007) & (0.001) & (0.005) & (0.001)\\
					
					MCC & 0.373 & 0.523 & 0.016 & 0.27 & \textbf{0.85} & 0.458 & 0.775 \\
					& (0.027) & (0.039) & (0.006) & (0.006) & (0.027) & (0.015) & (0.033) \\
					
					Avg CPU time & 1.739 & 1.76 & 48.54 & 549.196 & 252.94 & 5.811 & 537.604\\
					
					\hline
					
					{ }& \multicolumn{7}{c}{Cliques positive} \\
					{ }& \multicolumn{7}{c}{30 nonzero pairs out of 4950} \\
					{ }&  \multicolumn{7}{c}{nonzero elements = 0.75} \\
					& GL1 & GL2 & GSCAD & BGL & GHS & ECM & MCMC \\
					\hline\\
					Stein's loss & 6.010 & 7.48 & 5.98 &44.163 & \textbf{1.781} & 3.753 & 2.425\\
					& (0.212) & (0.45) &(0.21) & (0.790) & (0.232) & (0.275) & (0.323)\\
					
					F norm 	& 4.96 & 5.7 & 4.95 & 4.916 & \textbf{1.888} & 2.411 & 2.170\\
					& (0.1) & (0.13) &(0.107) & (0.103) & (0.184) & (0.142) & (0.198)\\
					
					TPR & \textbf{1} &\textbf{1} & \textbf{1} & \textbf{1} & \textbf{1}& \textbf{1} & \textbf{1}\\
					& (0) & (0) & (0) & (0) & (0) & (0) & (0)\\
					
					FPR & 0.11 & 0.042 & 0.972 & 0.177&  \textbf{0.0008} & 0.068 & 0.003  \\
					& (0.013) & (0.0011) &  (0.014) & (0.006)  & (0.005) & (0.006) & (0.001)\\
					
					MCC & 0.22 & 0.353 & 0.013 & 0.166 & \textbf{0.94} & 0.277 & 0.814\\
					& (0.013) & (0.041) & (0.003) & (0.004) & (0.031) & (0.012) & (0.035) \\
					
					Avg CPU time & 1.997 & 2.157 & 83.852 & 553.743 & 252.46 & 5.903 & 539.046\\	   
					\hline
			\end{tabular}
		\end{footnotesize}
	\end{table}
  
    \begin{table}[!th]
		\centering
		\caption{Mean (sd) Stein's loss, Frobenius norm, true positive rates and false positive rates, Matthews Correlation Coefficient of precision matrix estimates over 50 data sets generated by multivariate normal distributions with precision matrix $\bOmega_0$, where $n=120$ and $p=200$. The precision matrix is estimated by frequentist graphical lasso with penalized diagonal elements (GL1) and with unpenalized diagonal elements (GL2), graphical SCAD (GSCAD), Bayesian graphical lasso (BGL), the graphical horseshoe (GHS),  graphical horseshoe-like ECM (ECM) and graphical horseshoe-like MCMC (MCMC). The best performer in each row is shown in bold. Average CPU time is in seconds.
		}
		
		\label{simulation_p_200_n_120_part_1}
		\begin{footnotesize}
				\begin{tabular}{l rrrrrrr }
					\hline
					{ }& \multicolumn{7}{c}{Random} \\
					{ }& \multicolumn{7}{c}{29 nonzero pairs out of 19900} \\
					{ }& \multicolumn{7}{c}{nonzero elements $\sim -\mathrm{Unif}(0.2,1)$}  \\
					& GL1 & GL2 & GSCAD & BGL & GHS & ECM & MCMC  \\
					\hline\\
					Stein's loss & 10.06 & 15.578 & 9.975 & 117.092 & \textbf{3.073} & 11.109 & 5.632\\
					& (0.4) & (1.12) & (0.4) & (1.563) & (0.305) & (0.562) & (0.523)\\
					
					F norm & 4.469 & 5.929 & 4.44 & 6.803 & \textbf{2.468} & 3.917 & 3.313\\
					& (0.151) & (0.176) & (0.156) & (0.162) & (0.137) & (0.108) & (0.178)\\
					
					TPR & 0.944 & 0.845 & \textbf{0.999} & 0.982 & 0.848 & 0.97 & 0.877\\		
					& (0.036) & (0.036) & (0.005) & (0.024) & (0.038) & (0.028) & (0.041)\\
					
					FPR & 0.052 & 0.163 & 0.984 & 0.103 & \textbf{0.0001} &  0.066 & 0.002\\
					& (0.007) & (0.002) & (0.011) & (0.003) & (0.00007) & (0.003) & (0)\\
					
					MCC & 0.152 & 0.242 & 0.004 & 0.11 & \textbf{0.882} & 0.138 & 0.599 \\
					& (0.011) & (0.015) & (0.002) & (0.004) & (0.029) & (0.005) & (0.035) \\
					
					Avg CPU time & 38.759 & 43.486 & 510.703 & 4484.22 & 1866.47 & 80.939 & 2260.3\\
					
					\hline
					
					{ }& \multicolumn{7}{c}{Cliques negative} \\
					{ }& \multicolumn{7}{c}{60 nonzero pairs out of 19900} \\
					{ }& \multicolumn{7}{c}{nonzero elements = -0.45}\\
					& GL1 & GL2 & GSCAD & BGL & GHS & ECM & MCMC  \\
					\hline
					\\
					Stein's loss & 11.604 & 18.088 & 11.541 & 125.138 & \textbf{3.985} & 12.467 & 6.179\\
					& (0.401) & (0.993) & (0.396) & (1.714) & (0.403) & (0.626) & (0.403) \\
					
					F norm 	& 4.443 & 6.024 & 4.439 & 6.299 &\textbf{ 2.861} & 3.8 & 3.331\\
					& (0.088) & (0.143) & (0.076) & (0.168) & (0.209) & (0.126) & (0.2) \\
					
					TPR & \textbf{1} & \textbf{1} & \textbf{1} & \textbf{1} & .975 & \textbf{1} & 0.991\\
					& (0) & (0) & (0) & (0) & (.173) & (0) & (0.012) \\
					
					FPR & 0.066 & 0.016 & 0.998 & 0.099 & \textbf{0.0002} & 0.084 & 0.002\\
					& (0.004) & (0.002) & (0.005) & (0.002) & (0.0001) & (0.003) & (0)\\
					
					MCC & 0.202 & 0.395 & 0.006 & 0.164 & \textbf{0.944} & 0.178 & 0.784 \\
					& (0.006) & (0.027) & (0.001) & (0.002) & (0.16) & (0.003) & (0.028) \\
					
					Avg CPU time & 32.936 & 36.49 & 548.26 & 4492.67 & 1876.96 & 70.683 & 2249.3\\	   
					\hline
			\end{tabular}
		\end{footnotesize}
	\end{table}
	\begin{table}[!th]
		\centering
		\caption{Mean (sd) Stein's loss, Frobenius norm, true positive rates and false positive rates, Matthews Correlation Coefficient of precision matrix estimates over 50 data sets generated by multivariate normal distributions with precision matrix $\bOmega_0$, where $n=120$ and $p=200$. The precision matrix is estimated by frequentist graphical lasso with penalized diagonal elements (GL1) and with unpenalized diagonal elements (GL2), graphical SCAD (GSCAD), Bayesian graphical lasso (BGL), the graphical horseshoe (GHS),  graphical horseshoe-like ECM (ECM) and graphical horseshoe-like MCMC (MCMC). The best performer in each row is shown in bold. Average CPU time is in seconds.
		}
		
		\label{simulation_p_200_n_120_part_2}
		\begin{footnotesize}
				\begin{tabular}{l rrrrrrr }
					\hline
					{ }&  \multicolumn{7}{c}{Hubs} \\
					{ }& \multicolumn{7}{c}{180	nonzero pairs out of 19900} \\
					{ }&  \multicolumn{7}{c}{nonzero elements = 0.25} \\
					& GL1 & GL2 & GSCAD & BGL & GHS & ECM & MCMC  \\
					\hline\\
					Stein's loss & 12.407 & 15.243 &12.331 & 123 & \textbf{11.692} & 12.825 & 12.741 \\
				    & (0.491) & (0.819) & (0.465) & (1.31) & (0.781) & (0.623) & (0.922)\\
					
					F norm & 4.594 & 5.3 & 4.583 & 7.129 & \textbf{3.763} & 4.209 & 4.179\\
					& (0.01) & (0.152) & (0.084) & (0.16) & (0.132) & (0.107) & (0.174)\\
					
					TPR & 0.99 & 0.976 & \textbf{1} & 0.991 & 0.779 & 0.986 &  0.737\\		
					& (0.007) & (0.137) & (0) & (0.006) & (0.034) & (0.009) & (0.033) \\
					
					FPR & 0.065 & 0.024 & 0.999 & 0.119 & \textbf{0.001} & 0.066 & 0.003\\
					& (0.005) & (0.006) & (0.001) & (0.003) & (0.0003) & (0.003) & (0)\\
					
					MCC & 0.336 & 0.515 & 0.01 & 0.248 & \textbf{0.82} & 0.332 & 0.722\\
					& (0.015) & (0.043) & (0.003) & (0.003) & (0.024) & (0.006) & (0.03) \\
					
					Avg CPU time & 17.847 & 19.917 & 517.33 & 4499.30 & 1870.57 & 74.808 & 2523.1\\
					
					\hline
					
					{ }&  \multicolumn{7}{c}{Cliques positive} \\
					{ }& \multicolumn{7}{c}{60 nonzero pairs out of 19900} \\
					{ }& \multicolumn{7}{c}{nonzero elements = 0.75} \\
					& GL1 & GL2 & GSCAD & BGL & GHS & ECM & MCMC  \\
					\hline
					\\
					Stein's loss & 14.523 & 17.262 & 14.477 & 126.487 & \textbf{3.797} & 13.512 & 6.717\\
					& (0.339) & (0.692) & (0.333) & (1.41) & (0.35) & (0.522) & (0.479) \\
					
					F norm 	& 7.59 & 8.553 & 7.596 & 7.936 & \textbf{2.733} & 4.248 & 3.535\\
					& (0.1) & (0.115) & (0.091) & (0.109) & (0.181) & (0.142) & (0.191) \\
					
					TPR & \textbf{1} & \textbf{1} & \textbf{1} & \textbf{1} & \textbf{1} & \textbf{1} & \textbf{1}  \\
					& (0) & (0) & (0) & (0) & (0) & (0) & (0) \\
					
					FPR & 0.065 & 0.024 & 0.991 & 0.115 & \textbf{0.0004} & 0.08 & 0.003\\
					& (0.005) & (0.004) & (0.007) & (0.002)  & (0.0001) & (0.002) & (0)\\
					
					MCC & 0.205 & 0.335 & 0.01 & 0.15 & \textbf{0.959} & 0.184 & 0.735 \\
					& (0.007) & (0.028) & (0.002) & (0.002) & (0.19) & (0.003) & (0.024) \\
					
					Avg CPU time & 23.768 & 25.3 & 880.46 & 4497.97 & 1872.55 & 80.652 & 2262.6\\	   
					\hline
			\end{tabular}
		\end{footnotesize}
	\end{table}
	Coming next to variable selection results, one may expect the penalized likelihood approaches to really shine; since these methods produce exact zeros, unlike the Bayesian approaches that necessitate some form of post-processing. Nevertheless, the Bayesian approaches offer the advantage of controlling the trade-off between TPR and FPR, by varying the width of the credible interval, for example. With our chosen mechanism (i.e., a variable is considered not to be selected if the middle 50\% credible interval includes zero), the GHS-LIKE-MCMC and GHS have the smallest TPR. Nevertheless, the penalized methods also have higher FPR in general (except for GHS-LKE-ECM), which results in lower MCC overall. In particular, the GSCAD estimate, which is not guaranteed to be positive definite in finite samples \citep{fan2016overview}, seems not to work well in general. Additional numerical results investigating the choice of starting values for the GHS-LIKE-ECM algorithm is given in Supplementary Section~\ref{sec:simulation_study_2}.
	
\section{Protein--Protein Interaction Network in B-cell Lymphoma}
	\label{sec:real_life_example}
	We analyze Reverse Phase Protein Array (RPPA) data of 33 patients with lymphoid neoplasm ``Diffuse Large B-cell Lymphoma'' to infer the protein interaction network. The data set consists of protein expressions for 67 genes across 12 pathways for all patients. 
	\begin{table}[!th]
		\centering
		\caption{Percentage of zeros ($\%$ Sparsity) and number of non-zero entries (NNZ) in the lower triangle of the precision matrix estimate of RPPA data for the competing approaches.} 
		\begin{tabular}{l  rrrrrrc} 
			\hline
			& MCMC & ECM & GHS & BGL & GL1 & GL2 & GSCAD \\ [0.5ex] 
			\hline
			$\%$ Sparsity & 95.79 & 88.6 & 91.59 & 73.72 & 69.88 & 73.67 & $9.06\times 10^{-4}$ \\ 
			NNZ & 93 & 252 &  186 & 581 & 666 & 582 & 2209\\ 
			\hline
		\end{tabular}
		\label{tab:real_data_nnz_and_sparsity}
	\end{table}
	As in simulations, we use $50\%$ posterior credible intervals for variable selection in GHS, BGL and GHS-LIKE-MCMC. The estimated sparsity (\% of zero elements) and number of non zeros in the lower triangle of the estimates are given in Table \ref{tab:real_data_nnz_and_sparsity}. We note that the GHS-LIKE-MCMC gives the sparsest estimate, almost 4\% sparser than the GHS. This is consistent with prior studies that found robust gene networks are typically sparse \citep{leclerc2008survival}.  As in the simulations, GSCAD performs the worst. To compare with a prior analysis of the same data set, we use the PRECISE framework of \citet{ha2018personalized}. This method can infer directed edges, but we ignore the directionality since we are interested in interactions and not causation. The proportions of edges in the estimates that `agree' and `do not agree' with the edges inferred using PRECISE framework are presented in Table \ref{tab:prop_AE_NE}. Protein networks realized from the estimates are presented in Figure \ref{graph_est_all_methods}. It can be seen that the GHS-LIKE-MCMC has the sparsest estimate among the methods that allow for interaction across all proteins, unlike the PRECISE framework that ignores interactions among proteins in different pathways, which may not be biologically justifiable.
	\begin{table}[!h]
		\centering
		\caption{Proportion of edges that `agree' (AE) and `do not agree' (NE) with the edges inferred using the PRECISE framework.} 
		\footnotesize{
		\begin{tabular}{ l  rrrrrrr} 
			\hline
			& MCMC & ECM & GHS & BGL & GL1 & GL2 & GSCAD \\ [0.5ex] 
			\hline
			AE & 0.238 & 0.412 & 0.325 & 0.575 & 0.638 & 0.6 & 1 \\ 
			NE & 0.034 & 0.101 &  0.074 & 0.247 & 0.284 & 0.247 & 0.984\\ 
			\hline
		\end{tabular}}
		\label{tab:prop_AE_NE}
	\end{table}
	\begin{figure}[!h]
		\centering 
		\begin{subfigure}{0.5\textwidth}
			\includegraphics[scale=0.6]{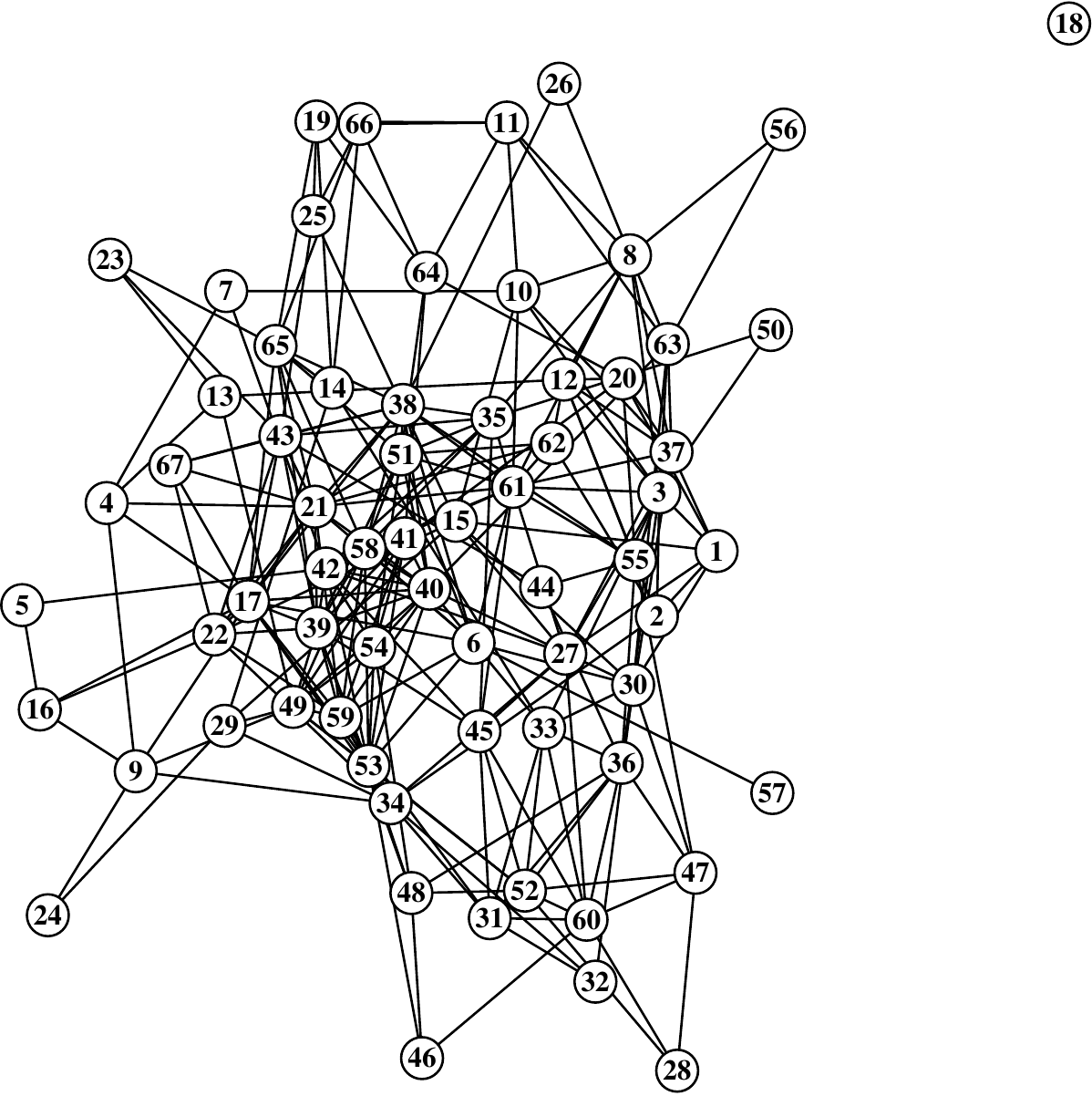}
			\caption{}
		\end{subfigure}\hfil 
		\begin{subfigure}{0.5\textwidth}
			\includegraphics[scale=0.6]{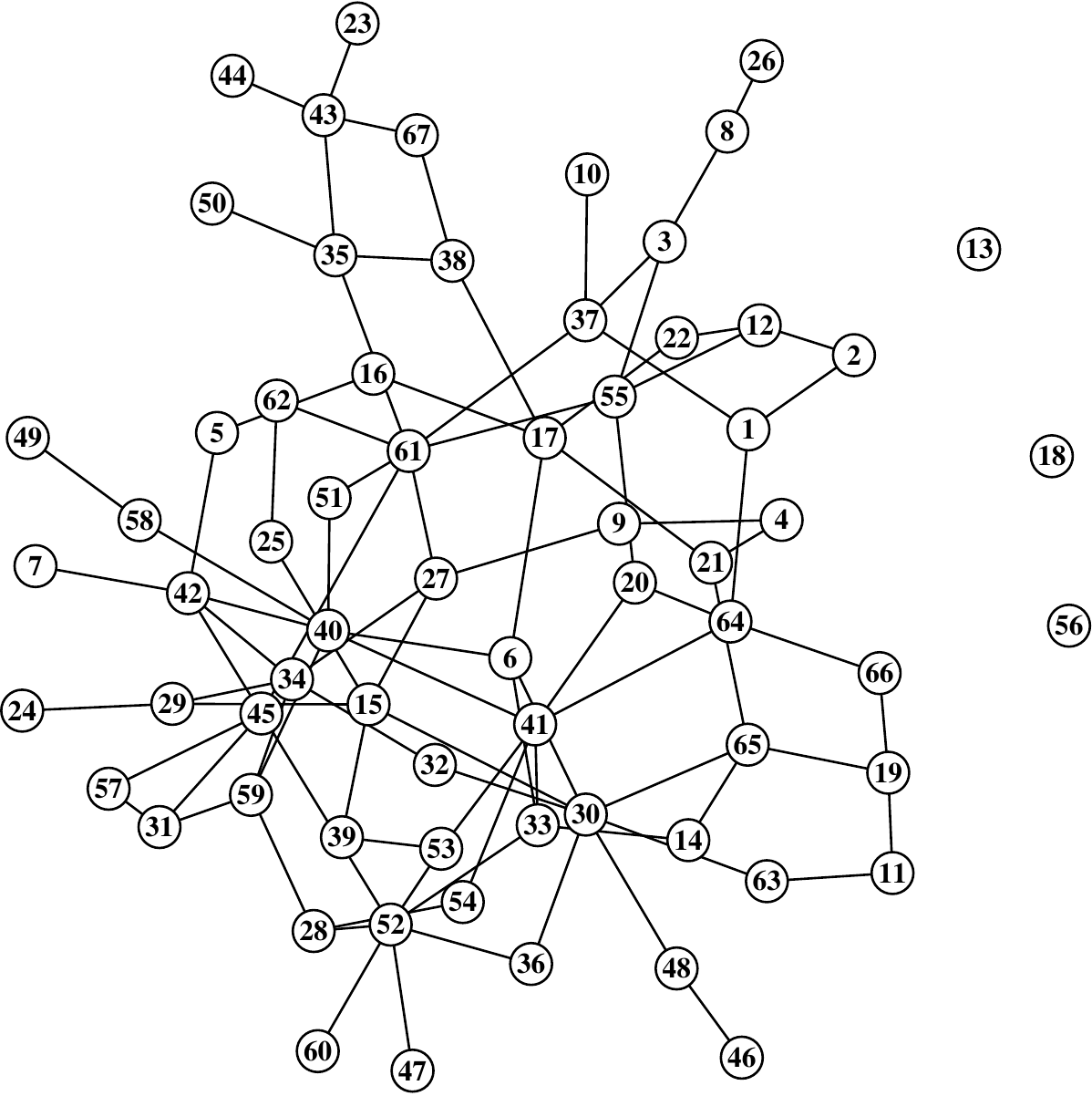}
			\caption{}
		\end{subfigure}\hfil 
		\medskip
		\medskip
		\begin{subfigure}{0.5\textwidth}
			\includegraphics[scale=0.6]{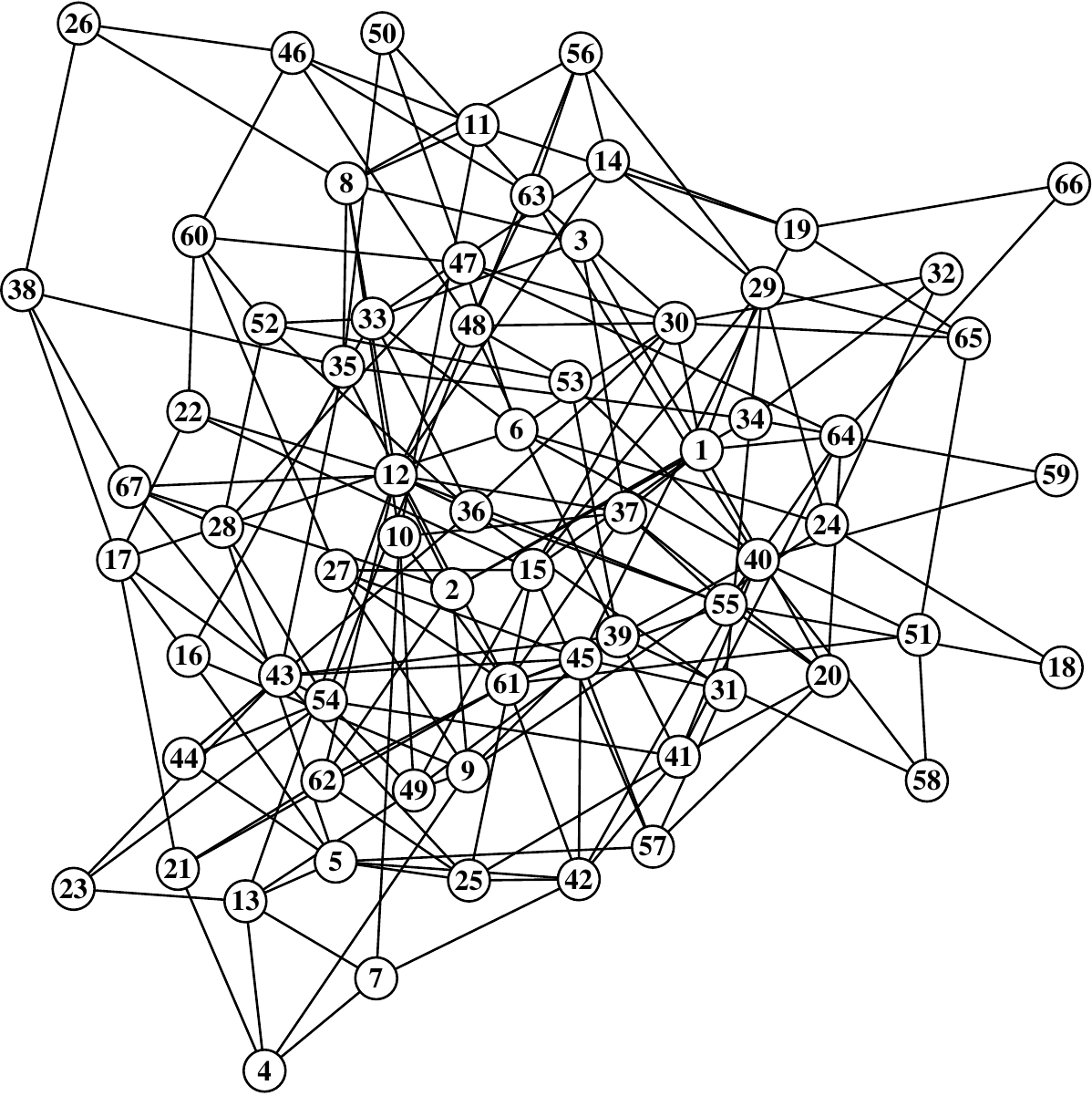}
			\caption{}
		\end{subfigure}\hfil 
		\begin{subfigure}{0.5\textwidth}
			\includegraphics[scale=0.6]{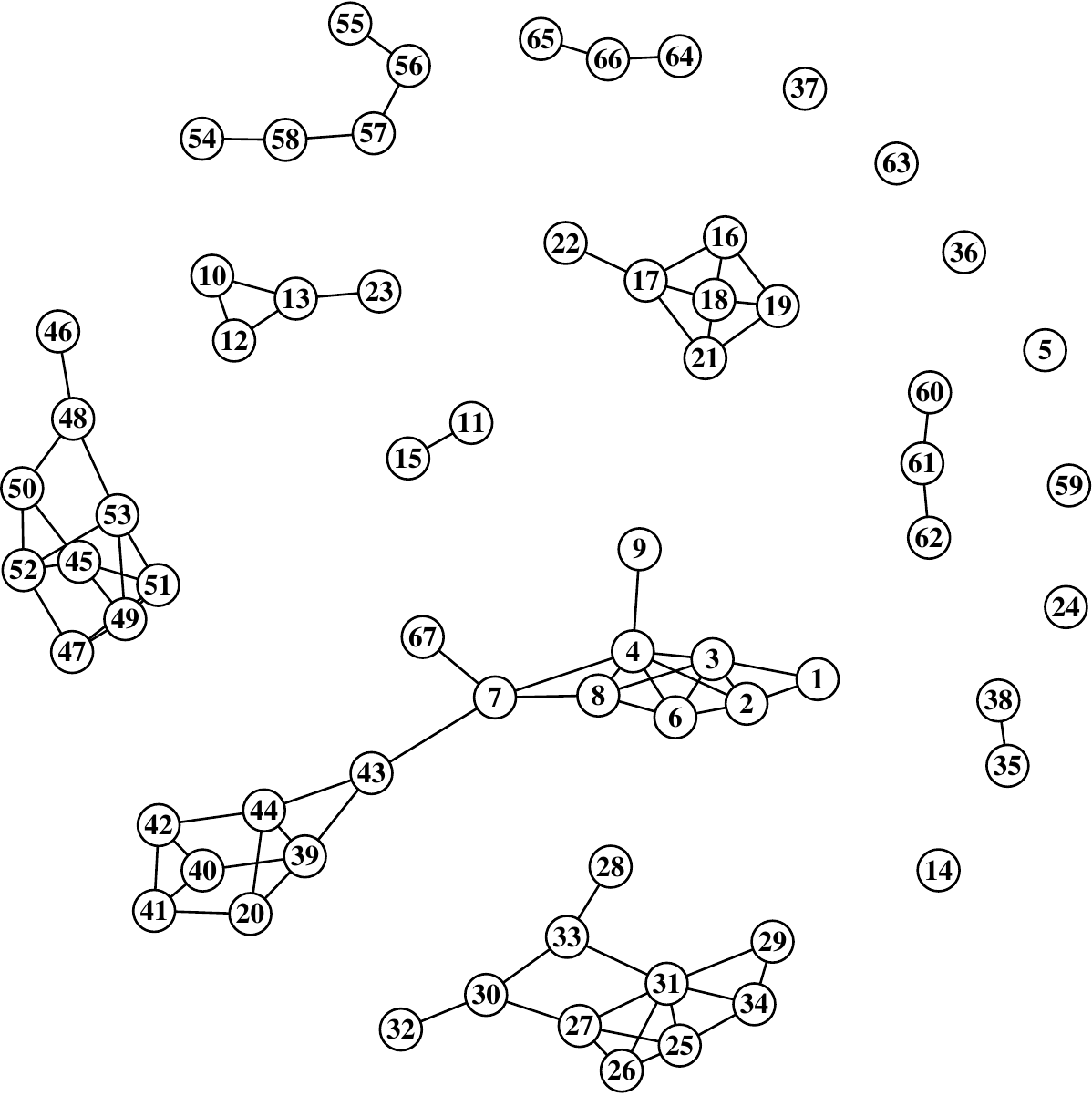}
			\caption{}
		\end{subfigure}\hfil 
		\caption{(a), (b), (c) and (d) correspond to RPPA networks for GHS-LIKE-ECM, GHS-LIKE-MCMC, GHS and PRECISE. The nodes are numbered from 1 to 67, which are proteins. The map between node numbers and protein names is given in the Supplementary Table  
			\ref{nodes_num_and_protein_names}.}
		\label{graph_est_all_methods}
	\end{figure}

	\section{Concluding Remarks}
	\label{sec:discussion}
	Our main contribution in this paper is twofold: first, we propose a fully analytical prior--penalty dual termed the \textit{graphical horseshoe-like} for inference in graphical models, and second, we provide the first ever optimality results for both the frequentist point estimate as well as the fully Bayesian posterior. Consequently, we also establish the first Bayesian optimality results for the graphical horseshoe prior of \citet{li2019graphical}. Our simulation studies clearly establish that the family of horseshoe based priors perform the best among state-of-the-art competitors across a wide range of data generating mechanisms, and suggest a potential trade-off between computational burden and statistical performance vis-\`a-vis penalized likelihood and fully Bayesian procedures. Our analysis of the RPPA data establishes the proposed approach as an effective regularizer of a gene interaction network; useful for identifying the key interactions in the disease etiology of cancer.
	
	Although we focus on the estimation of $\bOmega$, two other important aspects of network inference are edge selection and the associated uncertainty quantification. Here we use posterior credible intervals for edge selection, but it might be interesting to incorporate other methods that have been proposed for variable selection with shrinkage priors, such as 2-means \citep{bhattacharya2015dirichlet} or shrinkage factor thresholding \citep{tang2018bayesian}, with appropriate modifications. On a related note, it will be interesting to establish the Bayes risk and the oracle under $0-1$ loss and we conjecture that global-local shrinkage priors will attain such oracular risk with suitable assumptions on the prior tails and the global shrinkage parameter. Finally, it will be worth investigating whether one can extend the methods for generalized linear models, e.g. graphical models with exponential families as node-conditional distributions \citep{yang2012graphical}. It has been shown that while restricting the response distribution to natural exponential families with quadratic variance functions, shrinkage estimators enjoy certain optimality properties \citep{xie2016optimal}, and it remains to be settled whether similar properties hold true for graphical models as well.

	\section*{Appendix}
	\setcounter{section}{0}
	\renewcommand{\thesection}{A.\arabic{section}}
	\renewcommand{\thesubsection}{A.\arabic{section}.\arabic{subsection}}

	\section{Proof of Theorem~\ref{thm:main-post-conv}}
	\label{app-main-post-conv}
	We use the general theory of posterior convergence rate as outlined in Theorem 2.1 of \cite{ghosal2000}. We also refer to several auxiliary lemmas from Supplementary Section~\ref{sec:app-aux} throughout the proof. We need to show the following:
	\begin{itemize}
		\item[(i)] the prior concentration rate of Kullback--Leibler $\epsilon_n^2$-neighborhoods is at least $\exp(-cn\epsilon_n^2)$ for some constant $c > 0,$
		\item[(ii)] for a suitably chosen sieve of densities $\mathcal{P}_n$, the $\epsilon_n$-metric entropy of $\mathcal{P}_n$ is bounded by a constant multiple of $n\epsilon_n^2,$
		\item[(iii)] the probability of the complement of the above sieve is exponentially small, that is, $\Pi(\mathcal{P}_n^c) \leq \exp(-c'n\epsilon_n^2),$ for some constant $c'> 0$.
	\end{itemize}
	
	The above three parts together give the posterior convergence rate $\epsilon_n$ with respect to the Hellinger distance on the space of densities of the precision matrix. Owing to the intrinsic relationship between the Hellinger distance and the Frobenius distance for precision matrices as given by Lemma A.1 of \cite{banerjee2015bayesian}, we get the desired posterior convergence rate. 
	
	\noindent\textbf{(i) \underline{Prior concentration.}}\\
	We first define $\mathcal{B}(p_{\Ommat_0}, \eps_n)$, the $\eps_n^2$-neighborhoods of the true density in the Kullback--Leibler sense. For $K(p_1,p_2) = \int p_1\log(p_1/p_2),\; V(p_1,p_2) = \int p_1 \log^2(p_1/p_2),$ this is defined as $\mathcal{B}(p_{\Ommat_0}, \eps_n) = \{p_{\Ommat}: K(p_{\Ommat_0},p_{\Ommat}) \leq \eps_n^2, V(p_{\Ommat_0},p_{\Ommat}) \leq \eps_n^2\}.$ For $\bOmega_0 \in \mathcal{U}(\varepsilon_0,s),\,\bOmega \in \mathcal{M}_p^+(L),$ let $d_1,\ldots,d_p$ denote the eigenvalues of $\bOmega_0^{-1/2}\bOmega\bOmega^{-1/2}_0$. Then, using Lemma~\ref{lemma:KL}, we have,
	\begin{eqnarray}
		\label{eqn:KL-expressions}
		K(p_{\Ommat_0},p_{\Ommat}) &=& -\dfrac{1}{2}\sum_{i=1}^{p}\log d_i - \dfrac{1}{2}\sum_{i=1}^{n}(1 - d_i),\nonumber \\
		V(p_{\Ommat_0},p_{\Ommat}) &=& \dfrac{1}{2}\sum_{i=1}^{n}(1 - d_i)^2 + K(p_{\Ommat_0},p_{\Ommat})^2.
	\end{eqnarray}
	Note that $\sum_{i=1}^{n}(1 - d_i)^2  = \|\bmI_p - \bOmega_0^{-1/2}\bOmega\bOmega^{-1/2}_0\|_2^2,$ so that, when  $\|\bmI_p - \bOmega_0^{-1/2}\bOmega\bOmega^{-1/2}_0\|_2^2$ is small, we have, $\max_{1 \leq i \leq p}|1-d_i| < 1.$ This gives, using (\ref{eqn:KL-expressions}), 
	\begin{equation*}
		K(p_{\Ommat_0},p_{\Ommat}) = -\dfrac{1}{2}\sum_{i=1}^{p}\log d_i - \dfrac{1}{2}\sum_{i=1}^{n}(1 - d_i) \lesssim \sum_{i=1}^n(1-d_i)^2,\, V(p_{\Ommat_0},p_{\Ommat}) \lesssim \sum_{i=1}^n(1-d_i)^2.\nonumber
	\end{equation*}
	Observe that,
	\begin{eqnarray}
		\sum_{i=1}^{n}(1 - d_i)^2 &=& \|\bmI_p - \bOmega_0^{-1/2}\bOmega\bOmega^{-1/2}_0\|_2^2 = \|\bOmega_0^{-1/2}(\bOmega - \bOmega_0)\bOmega^{-1/2}_0\|_2^2 \nonumber \\
		&\leq & \|\bOmega_0^{-1}\|_2^2\|\bOmega - \bOmega_0\|_2^2 \leq \varepsilon_0^{-2}\|\bOmega - \bOmega_0\|_2^2.\nonumber
	\end{eqnarray}
	Hence, for a sufficiently small constant $c_1>0$, we have,
	\begin{equation}
		\Pi\left(\mathcal{B}(p_0, \eps_n)\right) \geq \Pi \left\lbrace \|\bOmega - \bOmega_0\|_2 \leq c_1\eps_n\right\rbrace 
		\geq \Pi \left\lbrace \|\bOmega - \bOmega_0\|_\infty \leq c_1\eps_n/p\right\rbrace. \nonumber
	\end{equation}
	The proposed prior on $\bOmega$ has a bounded spectral norm. However, such a constraint can only increase the prior concentration, since $\bOmega_0 \in \mathcal{U}(\varepsilon_0,s),\, \varepsilon_0 < L$.  Hence, we may pretend component-wise independence of the elements of $\bOmega$, so that the above expression can be simplified as products of marginal prior probabilities. We have, 
	\begin{eqnarray}
		\Pi\left(\|\bOmega - \bOmega_0\|_{\infty} \leq c_1\eps_n/p \right) &\gtrsim & (c_1\eps_n/p)^{(p+s)}\prod_{\{(i,j): \omega_{ij,0} = 0\}} \pi(|\omega_{ij}| \leq c_1\eps_n/p) \nonumber \\
		&\geq& (c_1\eps_n/p)^{(p+s)}\min_{\{(i,j): \omega_{ij,0} = 0\}}\{\pi(|\omega_{ij}| \leq c_1\eps_n/p)\}^{{\binom{p}{2}} - s }.\nonumber
	\end{eqnarray}
	Note that, from equation (\ref{eqn:lemma-prior-prob-1}) in Lemma~\ref{lemma:prior-prob}, we have, for all $1\leq i, j \leq p$, $$\{\pi(|\omega_{ij}| \leq c_1\eps_n/p)\}^{\binom{p}{2} - s } \geq \{1 - p^{-b'_1}\}^{\binom{p}{2} - s} \rightarrow 1.$$
	Thus, $\Pi\left(\|\bOmega - \bOmega_0\|_{\infty} \leq c_1\eps_n/p \right) \gtrsim (c_1\eps_n/p)^{(p+s)}.$ The prior concentration rate condition thus gives, $(p+s)(\log p + \log (1/\eps_n)) \asymp n\eps_n^2,$ so as to yield $\eps_n = n^{-1/2}(p+s)^{1/2}(\log n)^{1/2}.$

	\noindent\textbf{(ii) \underline{Choosing the sieve and controlling metric entropy.}}\\
	We now carefully choose a sieve in the space of prior densities to control its Hellinger metric entropy. Consider the sieve $\mathcal{P}_n$ such that the maximum number of elements of $\bOmega$ exceeding $\delta_n = \eps_n/p^\nu, \nu > 0$ is at most $\bar{r}_n$, and the absolute values of the entries of $\bOmega$ are at most $B$. Formally, the sieve is thus given by,
	$$\mathcal{P}_n = \{\bOmega \in \mathcal{M}_p^+(L): \sum_{j,k}\Ind(|\omega_{jk}| > \delta_n) \leq \bar{r}_n, \|\bOmega\|_\infty \leq B\},$$
	where $\delta_n = \eps_n/p^\nu$ and some sufficiently large $B > 0.$ We shall choose $B$ in such a way that the metric entropy condition is satisfied. 
	Note that, for $\bOmega_1,\bOmega_2 \in \mathcal{M}_p^+(L),\,\|\bOmega_1 - \bOmega_2\|_2^2 \leq p^2\|\bOmega_1 - \bOmega_2\|_\infty^2,$ so that, if $\|\bOmega_1 - \bOmega_2\|_\infty^2 \leq \eps_n^2/p^{2\nu}$, where $\nu$ is such that $B \leq p^{\nu - 1}$, we have, $\|\bOmega_1 - \bOmega_2\|_2^2 \leq \eps_n^2/p^{2(\nu-1)}.$ The $\eps_n/p^\nu$-metric entropy w.r.t. the $L_{\infty}$-norm is given by 
	\begin{eqnarray}
		\log \left\{\left(\dfrac{Bp^\nu}{\eps_n}\right)^p \sum_{j=1}^{\bar{r}_n}\left(\dfrac{Bp^\nu}{\eps_n}\right)^j {\binom{\binom{p}{2}}{j}} \right\}  &=& \log \left\{\left(\dfrac{Bp^\nu}{\eps_n}\right)^p\right\} + \log \left\lbrace \sum_{j=1}^{\bar{r}_n}\left(\dfrac{Bp^\nu}{\eps_n}\right)^j {\binom{\binom{p}{2}}{j}}\right\rbrace \nonumber \\	
		&\leq & \log \left\{\left(\dfrac{Bp^\nu}{\eps_n}\right)^p\right\} + \log \left\lbrace \bar{r}_n \left(\dfrac{Bp^\nu}{\eps_n}\right)^{\bar{r}_n}{\binom{p+\binom{p}{2}}{\bar{r}_n }} \right\rbrace \nonumber \\
		&\lesssim & (\bar{r}_n+p)(\log p + \log B + \log(1/\eps_n)).\nonumber
	\end{eqnarray}
	
	Choosing $\bar{r}_n \sim k_1 n\eps_n^2/\log n,\, k_1 > 0,$ and $B \sim k_2n\eps_n^2,\, k_2 > 0$, the above metric entropy is bounded by a constant multiple of $n\eps_n^2$. Since $\|\bOmega_1\|_{(2,2)} \leq p\|\bOmega_1\|_{\infty} \leq pB \leq  p^{\nu}$, and $h^2(p_1,p_2) \leq p^2\|\bOmega_1\|_{(2,2)}^2\|\bOmega_1 - \bOmega_2\|_\infty^2,$ the $\eps_n$-metric entropy with respect to the Hellinger distance is also bounded by a constant multiple of $n\eps_n^2$. Thus, the rate $\eps_n$ obtained via the prior concentration rate calculation satisfies the metric entropy condition as well. 
	
	\noindent\textbf{(iii) \underline{Controlling probability of the complement of the sieve.}}\\
	The task of controlling the probability of the complement of the sieve can further be divided into two sub-parts. Note that,
	$$\Pi(\mathcal{P}_n^c) \leq \Pi(N \geq \bar{r}_n + 1) + \Pi(\|\bOmega\|_\infty > B).$$
	We will calculate the probabilities in the right-hand side of the above display under an unconstrained case, and then take care of the truncation used in the prior for $\bOmega$, given by $\bOmega \in \mathcal{M}_p^+(L)$, by finding a suitable lower bound for the event $\{0 < L^{-1} < \|\bOmega\|_{(2,2)} < L < \infty\}.$ Let us denote the prior under the unconstrained case by $\Pi^*.$
	
	Define $N = \#\{(i,j): |\omega_{ij}| > \delta_n\}.$ Note that, $N \sim Bin(p_n^*, \nu_n)$, $p_n^* = \binom{p}{2}, \nu_n = \Pr(|\omega_{ij}| > \delta_n).$ Using results on bounding the Binomial CDF as in \cite{song2017nearly}, we have, 
	\begin{eqnarray}
	\Pi^*(N \geq \bar{r}_n+1)  &\leq &  1 - \Phi\left\{(2p_n^*H[\nu_n, \bar{r}_n/p_n^*])^{1/2} \right\} \nonumber \\
	&\leq &  (2\pi)^{-1/2}(2p_n^*H[\nu_n, \bar{r}_n/p_n^*])^{-1/2}\exp\{-p_n^*H[\nu_n, \bar{r}_n/p_n^*]\}, \nonumber
	\end{eqnarray}
 where $$p_n^*H[\nu_n, \bar{r}_n/p_n^*] = \bar{r}_n \log\left(\dfrac{\bar{r}_n}{p_n^* \nu_n}\right) + (p_n^* - \bar{r}_n) \log\left(\dfrac{p_n^* - \bar{r}_n}{p_n^* - p_n^* \nu_n}\right).$$
	It suffices to prove that $p_n^*H[\nu_n, \bar{r}_n/p_n^*] \geq O(n\eps_n^2).$ We have,
	$$p_n^*H[\nu_n, \bar{r}_n/p_n^*] \approx  \bar{r}_n \log\left(\dfrac{\bar{r}_n}{p_n \nu_n}\right) + (p_n^2 - \bar{r}_n) \log\left(\dfrac{p_n^2 - \bar{r}_n}{p_n^2 - p_n^2 \nu_n}\right).$$
	For the first term on the RHS above, we have,
	$\bar{r}_n \log\{\bar{r}_n/(p_n \nu_n)\} \geq \bar{r}_n \log \bar{r}_n + b_1' \bar{r}_n \log p_n,$
	since $\nu_n \leq p_n^{-b_1'}$ vide (\ref{eqn:lemma-prior-prob-1}) in Lemma~\ref{lemma:prior-prob}. Note that $\bar{r}_n \log p \sim \bar{r}_n \log n \asymp n\eps_n^2,$ so as to get $\bar{r}_n \log\{\bar{r}_n/(p_n \nu_n)\} \geq n\eps_n^2.$
	For the second term, we have,
	$(p_n^2 - \bar{r}_n) \log\{(p_n^2 - \bar{r}_n)/(p_n^2 - p_n^2 \nu_n)\} \asymp \bar{r}_n\left(1 - \bar{r}_n/p_n^2\right)  = o(n\eps_n^2).$
	Hence, we get, $p_n^*H[\nu_n, \bar{r}_n/p_n^*] \geq O(n\eps_n^2),$ which implies,
	\begin{equation}
		\label{eqn:sieve-compl-1}
		\Pi^*(N \geq \bar{r}_n+1) \leq \exp\{-C'n\eps_n^2\},
	\end{equation}
	for some $C' > 0.$ 
	From (\ref{eqn:lemma-prior-prob-2}) in Lemma~\ref{lemma:prior-prob}, for the choice of $B \sim k_2n\eps_n^2$ as outlined in the metric entropy condition above, we have,
	\begin{equation}
		\label{eqn:sieve-compl-2}
		\Pi^*(\|\bOmega\|_\infty > B) \leq 2p^2 \exp(-k_2n\eps_n^2).
	\end{equation}
	Combining equations~(\ref{eqn:sieve-compl-1}) and (\ref{eqn:sieve-compl-2}), we get, for a suitable constant $c_3 > 0$,
	\begin{equation}
		\label{eqn:sieve-compl-final}
		\Pi^*(N \geq \bar{r}_n+1) + \Pi^*(\|\bOmega\|_\infty > B) \lesssim \exp(-c_3n\eps_n^2).
	\end{equation}
	Combining (\ref{eqn:sieve-compl-final}) and (\ref{eqn:constrined-omega-final}), we get, for a suitable constant $c_4 > 0$,
	\begin{eqnarray}
		\Pi(\mathcal{P}_n^c)  = \dfrac{\Pi^*(\mathcal{P}_n^c)}{\Pi(\bOmega \in \mathcal{M}_p^+(L))} &\lesssim & \exp(-c_3n\eps_n^2)L^{-p}\exp(C_1p n^{-1/2}) \nonumber \\
		&=& \exp(-c_3n\eps_n^2 - p \log L + C_2p n^{-1/2}) \nonumber \\
		&\lesssim & \exp(-c_4n\eps_n^2). \nonumber
	\end{eqnarray}
	Hence, the complement of the chosen sieve has exponentially small prior probability. Thus, $\eps_n = n^{-1/2}(p+s)^{1/2}(\log n)^{1/2}$ is the posterior convergence rate and the result is established.
	
	
	\section{Proof of Theorem \ref{thm:MAP}}
	\label{app-map}
	Consider the MAP estimator of the precision matrix corresponding to the graphical horseshoe-like prior $\hat{\bOmega}^{\mathrm{MAP}}$ as outlined in Section~\ref{subsec:MAP-theory}. Define $\bDelta = (\!(\delta_{ij})\!)= \bOmega - \bOmega_0$ such that $\|\bDelta\|_2 = M\epsilon_n$, $M > 0$ is a large constant. Here, $\bOmega_0 = (\!(\omega_{ij,0})\!)$ is the true precision matrix. The true covariance matrix is $\bSigma_0 = (\!(\sigma_{ij,0})\!)$, and the natural estimator of the covariance is $\bmS = (\!(s_{ij})\!)$. Consider $Q(\bOmega)$ as defined in (\ref{eq:q-omega}). If we can show that for some small $\varepsilon > 0$, $$\mathbb{P}\left(\sup_{\|\bDelta\|_2 = M\epsilon_n} Q(\bOmega_0 + \bDelta) < Q(\bOmega_0)\right) \geq 1 - \varepsilon,$$ 
	then there exists a local maximizer $\hat{\bOmega}$ such that $\|\hat{\bOmega} - \bOmega_0\|_2 = O_P(\epsilon_n).$ We have,
	\begin{eqnarray}
		Q(\bOmega) &=& l(\bOmega) - \sum_{i < j}pen(\omega_{ij}) = \dfrac{n}{2}\log \det (\bOmega) - \dfrac{n}{2}\tr(\bmS\bOmega) -  \sum_{i < j}pen(\omega_{ij}) \nonumber \\
		&=& \dfrac{n}{2}\left\{\log \det (\bOmega) - \tr(\bmS\bOmega) -  \dfrac{2}{n}\sum_{i < j}pen(\omega_{ij}) \right\}  = \dfrac{n}{2}h(\bOmega),\; \mathrm{say}. \nonumber
	\end{eqnarray}
	Let us denote as $(2/n)\,pen(\omega_{ij})$ as $p_n(\omega_{ij})$. This gives,
	\begin{eqnarray}
		h(\bOmega_0 + \bDelta) - h(\bOmega) &=& \log \det (\bOmega_0 + \bDelta) - \tr(\bmS(\bOmega_0 + \bDelta)) - \log \det (\bOmega_0) + \tr(\bmS\bOmega_0) \nonumber \\
		&& \hspace{0.5in} - \sum_{i < j}\left\{p_n(\omega_{ij,0} + \delta_{ij}) - p_n(\omega_{ij,0})\right\}.
		\label{eqn:Step1}	
	\end{eqnarray}
	By Taylor's series expansion of logarithm of the determinant of a matrix, we have,
	\begin{eqnarray}
		&&\log \det (\bOmega_0 + \bDelta) - \log \det (\bOmega_0) \nonumber  \\
		&=&  \mathrm{tr}(\bSigma_0 \bDelta) - \mathrm{vec}(\bDelta)^T\left[ \int_{0}^{1}(1 - \nu)(\bOmega_0 + \nu \bDelta)^{-1} \otimes (\bOmega_0 + \nu \bDelta)^{-1}\, d\nu\right]\mathrm{vec}(\bDelta).\nonumber
	\end{eqnarray}
	Plugging the above in (\ref{eqn:Step1}), we have the expression for $h(\bOmega_0 + \bDelta) - h(\bOmega)$ as
	\begin{eqnarray}
		&& \tr[(\bSigma_0 - \bmS)\bDelta] - \mathrm{vec}(\bDelta)^T\left[ \int_{0}^{1}(1 - \nu)(\bOmega_0 + \nu \bDelta)^{-1} \otimes (\bOmega_0 + \nu \bDelta)^{-1}\, d\nu\right]\mathrm{vec}(\bDelta) \nonumber \\
		&& \hspace{0.5in} - \sum_{i < j}\left\{p_n(\omega_{ij,0} + \delta_{ij}) - p_n(\omega_{ij,0})\right\} \nonumber \\
		&=& \mathrm{I} + \mathrm{II} + \mathrm{III},\, \mathrm{say}. 
		\label{eqn:Step2}	
	\end{eqnarray}
	We shall now separately bound the three terms I, II, and III. For bounding I, we have,
	\begin{eqnarray}
		\left|\tr[(\bSigma_0 - \bmS)\bDelta]\right| &\leq & \left| \sum_{i \neq j}(\sigma_{ij,0} - s_{ij})\delta_{ij} \right| + \left| \sum_{i}(\sigma_{ii,0} - s_{ii})\delta_{ii}\right|.
		\label{eqn:Step3}
	\end{eqnarray}
	Using Boole's inequality and Lemma \ref{lemma:lemma1}, we have, with probability tending to one,	
	$$\max_{i \neq j}|s_{ij} - \sigma_{ij,0}| \leq C_1\left(\dfrac{\log p}{n}\right)^{1/2}.$$
	Hence, the first term in the RHS of display (\ref{eqn:Step3}) is bounded by $C_1\left(\log p/n\right)^{1/2}\|\bDelta^{-}\|_1.$ By Cauchy-Schwarz inequality and Lemma~\ref{lemma:lemma1}, we have, with probability tending to one,
	\begin{eqnarray}
		\left| \sum_{i}(\sigma_{ii,0} - s_{ii})\delta_{ii}\right| & \leq & \left\{\sum_{i}(\sigma_{ii,0} - s_{ii})^2\right\}^{1/2}\|\bDelta^+\|_2 \leq  p^{1/2}\max_{1 \leq i \leq p}|s_{ii} - \sigma_{ii,0}|\|\bDelta^+\|_2 \nonumber \\
		& \leq & C_2\left(\dfrac{p \log p}{n}\right)^{1/2}\|\bDelta^+\|_2 \leq C_2\left(\dfrac{(p+s) \log p}{n}\right)^{1/2}\|\bDelta^+\|_2. \nonumber
	\end{eqnarray}
	Thus, combining the bounds above, with probability approaching one, a bound for expression I is, 
	\begin{equation}
		\mathrm{I} \leq C_1\left(\dfrac{\log p}{n}\right)^{1/2}\|\bDelta^{-}\|_1 + C_2\left(\dfrac{(p+s) \log p}{n}\right)^{1/2}\|\bDelta^+\|_2.
		\label{eqn:bound-I}
	\end{equation}
	
	Now we proceed to find suitable bounds for expression II. Note that II is upper bounded by the negative of the minimum of $\mathrm{vec}(\bDelta)^T\left[ \int_{0}^{1}(1 - \nu)(\bOmega_0 + \nu \bDelta)^{-1} \otimes (\bOmega_0 + \nu \bDelta)^{-1}\, d\nu\right]\mathrm{vec}(\bDelta).$ Using the result that $\min_{\|\bx\|_2 = 1}\bx^T \bmA \bx = \mathrm{eig}_1(\bmA)$, we have,
	\begin{eqnarray}
		&& \min\left\{\mathrm{vec}(\bDelta)^T\left[ \int_{0}^{1}(1 - \nu)(\bOmega_0 + \nu \bDelta)^{-1} \otimes (\bOmega_0 + \nu \bDelta)^{-1}\, d\nu\right]\mathrm{vec}(\bDelta)\right\} \nonumber \\
		&=& \|\bDelta\|_2^2 \, \mathrm{eig}_1\left[ \int_{0}^{1}(1 - \nu)(\bOmega_0 + \nu \bDelta)^{-1} \otimes (\bOmega_0 + \nu \bDelta)^{-1}\, d\nu\right] \nonumber \\
		& \geq & \|\bDelta\|_2^2\, \int_{0}^{1}(1 - \nu)\mathrm{eig}_1^2(\bOmega_0 + \nu \bDelta)^{-1}d\nu \geq \dfrac{1}{2}\|\bDelta\|_2^2 \min_{0 \leq \nu \leq 1}\mathrm{eig}_1^2(\bOmega_0 + \nu \bDelta)^{-1} \nonumber \\
		& \geq & \dfrac{1}{2}\min\left\{\mathrm{eig}_1^2(\bOmega_0 + \bDelta)^{-1}: \|\bDelta\|_2 \leq M\epsilon_n \right\}\nonumber.
	\end{eqnarray}
	Note that, $\mathrm{eig}_1^2(\bOmega_0 + \bDelta)^{-1} = \mathrm{eig}_p^{-2}(\bOmega_0 + \bDelta) \geq (\|\bOmega_0\|_{(2,2)} + \|\bDelta\|_{(2,2)})^{-2} \geq \varepsilon_0^2/2,$ with probability tending to one. The last inequality follows from the fact that $\|\bDelta\|_{(2,2)} \leq \|\bDelta\|_2 = o(1).$ Hence, with probability tending to one, we have the bound for expression II as
	\begin{equation}
		\mathrm{II} \leq - \dfrac{1}{4}\varepsilon_0^2 \|\bDelta\|_2^2.
		\label{eqn:bound-II}
	\end{equation}
	
	Finally, we proceed to find suitable bounds for expression III. Let us denote the set $\mathcal{S} = \{(i,j): \omega_{ij,0} = 0, i < j\}$. This set comprises of the indices in the uppper triangle of the true precision matrix that are exactly equal to zero. The complement of $\mathcal{S}$ consists of the indices with non-zero entries in the uppper triangle of the same. We can partition expression III (without the negative sign) as 
	\begin{align}
		\sum_{i < j}\left\{p_n(\omega_{ij,0} + \delta_{ij}) - p_n(\omega_{ij,0})\right\} &= \sum_{(i,j) \in \mathcal{S}}\left\{p_n(\omega_{ij,0} + \delta_{ij}) - p_n(\omega_{ij,0})\right\} + \sum_{(i,j) \in \mathcal{S}^c}\left\{p_n(\omega_{ij,0} + \delta_{ij}) - p_n(\omega_{ij,0})\right\} \nonumber\\
		&= \sum_{(i,j) \in \mathcal{S}}\left\{p_n(\delta_{ij}) - p_n(0)\right\}  + \sum_{(i,j) \in \mathcal{S}^c}\left\{p_n(\omega_{ij,0} + \delta_{ij}) - p_n(\omega_{ij,0})\right\} \nonumber \\
		& >  \dfrac{M'}{n} + \sum_{(i,j) \in \mathcal{S}^c}\left\{p_n(\omega_{ij,0} + \delta_{ij}) - p_n(\omega_{ij,0})\right\}. \nonumber
	\end{align}
	The last inequality follows from the fact that $pen(\theta) \rightarrow -\infty$ as $|\theta| \rightarrow 0$, and hence the first term in the above expression is larger than $M'/n$ for some large $M' > 0$. This implies: 
	$$\sum_{i < j}\left\{p_n(\omega_{ij,0} + \delta_{ij}) - p_n(\omega_{ij,0})\right\} > \sum_{(i,j) \in \mathcal{S}^c}\left\{p_n(\omega_{ij,0} + \delta_{ij}) - p_n(\omega_{ij,0})\right\}.$$
	By Taylor's series expansion of $p_n(\omega_{ij} + \delta_{ij})$ around $\omega_{ij,0} (\neq 0)$, we have,
	$$p_n(\omega_{ij} + \delta_{ij}) - p_n(\omega_{ij,0}) = \delta_{ij}p_n'(\omega_{ij,0}) + \dfrac{1}{2}\delta_{ij}^2 p_n''(\omega_{ij,0})(1 + o(1)).$$
	Since $-x \leq |x|$, we can write,
	\begin{eqnarray}	
		&& -\sum_{(i,j) \in \mathcal{S}^c}\left\{p_n(\omega_{ij,0} + \delta_{ij}) - p_n(\omega_{ij,0})\right\} \nonumber \\
		& \leq & \max\left\{|p_n'(\omega_{ij,0})| \right\} \sum_{(i,j) \in \mathcal{S}^c}|\delta_{ij}| + \dfrac{1}{2}\max\left\{|p_n''(\omega_{ij,0})| \right\} \sum_{(i,j) \in \mathcal{S}^c}\delta_{ij}^2(1 + o(1)) \nonumber \\
		& \leq & \max\left\{|p_n'(\omega_{ij,0})| \right\} \|\bDelta^{-1}\|_1 + \dfrac{1}{2}\max\left\{|p_n''(\omega_{ij,0})| \right\} \|\bDelta^-\|_2^2(1 + o(1)) \nonumber \\
		& \leq & s^{1/2}\max\left\{|p_n'(\omega_{ij,0})| \right\} \|\bDelta\|_2 + \dfrac{1}{2}\max\left\{|p_n''(\omega_{ij,0})| \right\} \|\bDelta\|_2^2(1 + o(1)).
		\label{eqn:Step4}
	\end{eqnarray}
	Now, note that, 
	$$|p_n'(\omega_{ij,0})| = \dfrac{2}{n}|pen'(\omega_{ij,0})| = \dfrac{2}{n}\dfrac{2a/|\omega_{ij,0}|^3}{(1+ \dfrac{a}{\omega_{ij,0}^2})\log\left(1 + \dfrac{a}{\omega_{ij,0}^2}\right)}.$$
	Since $(1+x)\log(1+x) > x$ for $x > -1, x \neq 0$, we have,
	$|p_n'(\omega_{ij,0})| <  4/(n|\omega_{ij,0}|).$
	We now arrive at a suitable bound for the double derivative of the penalty. Note that, for $\theta \neq 0$,
	\begin{eqnarray}
		pen''(\theta) &=& -\dfrac{2a\left\{(a + 3\theta^2)\log\left(1 + \dfrac{a}{\theta^2}\right) - 2a\right \}}{\theta^6\left(1 + \dfrac{a}{\theta^2}\right)^2\log^2\left(1 + \dfrac{a}{\theta^2}\right)} \nonumber \\
		&\leq&	-\dfrac{2a\left\{(a + 3\theta_0^2)\log\left(1 + \dfrac{a}{\theta_0^2}\right) - 2a\right \}}{\theta^6\left(1 + \dfrac{a}{\theta^2}\right)^2\log^2\left(1 + \dfrac{a}{\theta^2}\right)}, \nonumber
	\end{eqnarray}
	where $\theta_0 = \arg\max_\theta  \left\{-(a+3\theta^2)\log(1 + {a}/{\theta^2}) \right\} = (ak)^{1/2}, \; k = \{\exp(z_0)-1\}^{-1},\, z_0 \approx 1.0356.$ Hence,
	\begin{eqnarray}
		|pen''(\theta)| & \leq & \dfrac{2a\left|(a + 3\theta_0^2)\log\left(1 + \dfrac{a}{\theta_0^2}\right) - 2a\right |}{\theta^6\left(1 + \dfrac{a}{\theta^2}\right)^2\log^2\left(1 + \dfrac{a}{\theta^2}\right)} \nonumber \\
		&\leq & \dfrac{2a\left|(a + 3\theta_0^2)\log\left(1 + \dfrac{a}{\theta_0^2}\right) - 2a\right |}{\theta^6 \dfrac{a^2}{\theta^4}} = \dfrac{2\left|(a + 3\theta_0^2)\log\left(1 + \dfrac{a}{\theta_0^2}\right) - 2a\right |}{a\theta^2} \nonumber \\
		& = & \dfrac{2\left|(a + 3ak)\log\left(1 + \dfrac{a}{ak}\right) - 2a\right |}{a\theta^2} \approx \dfrac{C_3}{\theta^2}\nonumber,
	\end{eqnarray}
	where $C_3 > 0$ is a constant not depending on $n$ or $a$. This gives,
	$$|p_n''(\omega_{ij,0})| = \dfrac{2}{n}|pen''(\omega_{ij,0})| < \dfrac{2C_3}{n\min_{(i,j) \in \mathcal{S}^c}\omega_{ij,0}^2}.$$
	Thus, expression III can be bounded as follows:
	\begin{equation}
		\mathrm{III} \leq s^{1/2}\|\bDelta\|_2\dfrac{4}{n\min_{(i,j) \in \mathcal{S}^c}|\omega_{ij,0}|} + \dfrac{C_3}{n\min_{(i,j) \in \mathcal{S}^c}\omega_{ij,0}^2}\|\bDelta\|_2^2(1+o(1)).
		\label{eqn:bound-III}
	\end{equation}
	Combining Equations (\ref{eqn:bound-I}), (\ref{eqn:bound-II}), and (\ref{eqn:bound-III}), we have, with probability tending to one,
	\begin{eqnarray}
		&& Q(\bOmega_0 + \bDelta) - Q(\bOmega_0) \nonumber \\
		& \leq &  C_1\left(\dfrac{\log p}{n}\right)^{1/2}\|\bDelta^{-}\|_1 + C_2\left(\dfrac{(p+s) \log p}{n}\right)^{1/2}\|\bDelta^+\|_2 - \dfrac{1}{4}\varepsilon_0^2 \|\bDelta\|_2^2 \nonumber \\
		&& + \,s^{1/2}\|\bDelta\|_2\dfrac{4}{n\min_{(i,j) \in \mathcal{S}^c}|\omega_{ij,0}|} + \dfrac{C_3}{n\min_{(i,j) \in \mathcal{S}^c}\omega_{ij,0}^2}\|\bDelta\|_2^2(1+o(1)) \nonumber \\
		& \leq &  C_1\left(\dfrac{(p+s)\log p}{n}\right)^{1/2}\|\bDelta\|_2 + C_2\left(\dfrac{(p+s)\log p}{n}\right)^{1/2}\|\bDelta\|_2 - \dfrac{1}{4}\varepsilon_0^2 \|\bDelta\|_2^2 \nonumber \\
		&& + \,s^{1/2}\|\bDelta\|_2\dfrac{4}{n\min_{(i,j) \in \mathcal{S}^c}|\omega_{ij,0}|} + \dfrac{C_3}{n\min_{(i,j) \in \mathcal{S}^c}\omega_{ij,0}^2}\|\bDelta\|_2^2 (1+o(1)) \nonumber \\
		&\leq & \|\bDelta\|_2^2\left\{ C_1\left(\dfrac{(p+s)\log p}{n}\right)^{1/2}\|\bDelta\|_2^{-1} + C_2\left(\dfrac{(p+s)\log p}{n}\right)^{1/2}\|\bDelta\|_2^{-1} - \dfrac{1}{4}\varepsilon_0^2 \right. \nonumber \\
		&& \left. + \,(p+s)^{1/2}\|\bDelta\|_2^{-1} \dfrac{4}{n\min_{(i,j) \in \mathcal{S}^c}|\omega_{ij,0}|} + \dfrac{C_3}{n\min_{(i,j) \in \mathcal{S}^c}\omega_{ij,0}^2}(1+o(1)) \right\}  \nonumber \\
		&=& \|\bDelta\|_2^2\left\{\dfrac{C_1}{M} + \dfrac{C_2}{M} - \dfrac{1}{4}\varepsilon_0^2 + \dfrac{4}{(n \log p)^{1/2}\min_{(i,j) \in \mathcal{S}^c}|\omega_{ij,0}|} + \dfrac{C_3}{n\min_{(i,j) \in \mathcal{S}^c}\omega_{ij,0}^2}(1+o(1))\right\} < 0, \nonumber
		\label{eqn:finalStep}
	\end{eqnarray}
	for $M$ sufficiently large, and owing to the fact that the last two terms inside the bracket in the above display are $o(1)$ as $\min_{(i,j) \in \mathcal{S}^c}|\omega_{ij,0}|$ are bounded away from zero. This completes the proof.
	

\bigskip
\begin{center}
{\large\bf SUPPLEMENTARY MATERIAL}
\end{center}
Supplementary Material available online includes a summary of notations, additional
details on the proposed prior, auxiliary lemmas used in the
main theorems, and additional results on simulated and proteomics data. It also contains a computer code archive along with an instructional README file.


\bibliographystyle{apalike}
\bibliography{ref, hs-review}

\clearpage\pagebreak\newpage
\begin{center}
	{\LARGE{\bf Supplementary Material to\\
	{\it Precision Matrix Estimation under the Horseshoe-like Prior--Penalty Dual}\\
	{ by \vspace{2mm}}\\
	{\it K. Sagar, S. Banerjee, J. Datta and A. Bhadra}
	}
	}
\end{center}

\setcounter{equation}{0}
\setcounter{page}{1}
\setcounter{table}{0}
\setcounter{section}{0}
\setcounter{subsection}{0}
\setcounter{figure}{0}
\renewcommand{\theequation}{S.\arabic{equation}}
\renewcommand{\thesection}{S.\arabic{section}}
\renewcommand{\thesubsection}{S.\arabic{subsection}}
\renewcommand{\thepage}{S.\arabic{page}}
\renewcommand{\thetable}{S.\arabic{table}}
\renewcommand{\thefigure}{S.\arabic{figure}}

	
	\section{Notations and Preliminaries}
	\label{sec:notations}
	For positive real-valued sequences $\{a_n\}$ and $\{b_n\}$, $a_n = O(b_n)$ means that $a_n/b_n$ is bounded, and $a_n = o(b_n)$ means that $a_n/b_n \rightarrow 0$ as $n \rightarrow \infty$;  $a_n \lesssim b_n$ implies that $a_n = O(b_n)$, and $a_n \asymp b_n$ means that both $a_n = O(b_n)$ and $b_n = O(a_n)$ hold. For a sequence of random variables $\{X_n\}$, $X_n = O_P(\eps_n)$ means that $\P(|X_n| \leq M\eps_n) \rightarrow 1$ for some constant $M > 0.$
	
	Vectors are represented in bold lowercase English or Greek letters, with corresponding components denoted by non-bold letters, for example, $\bx = (x_1,\ldots,x_p)^T.$ For a vector $\bx \in \RR^p$, the $L_r$-norm, for $0 < r < \infty$,  is defined as $\|\bx\|_r = \left(\sum_{i=1}^p |x_j|^{r}\right)^{1/r}$, and the $L_{\infty}$-norm is defined as $\|\bx\|_{\infty} = \max_{1\leq j \leq p}|x_j|.$ The zero-vector is denoted by $\zerovec$. Matrices are represented in bold uppercase English or Greek letters, for example, $\bmA = (\!(a_{ij})\!),$ where $a_{ij}$ denotes the $(i,j)$th entry of $\bmA$. We denote the identity matrix by $\bmI_p$. For a symmetric matrix $\bmA$, $\eig_1(\bmA) \leq \ldots \leq \eig_p(\bmA)$ denote the ordered eigenvalues of $\bmA$, and its trace and determinant are denoted by $\tr(\bmA)$ and $\det \bmA$ respectively. The $L_r$ and $L_{\infty}$-norms on $p \times p$ matrices are respectively defined as $\|\bmA\|_r = \left(\sum_{i=1}^p \sum_{j=1}^p |a_{ij}|^r \right)^{1/r},\, 0 < r < \infty$, and $\|\bmA\|_{\infty} = \max_{1\leq i, j \leq p}|a_{ij}|$. In particular, the $L_2$-norm, or the Frobenius norm can be expressed as $\|\bmA\|_2 = \left\{\tr(\bmA^T \bmA)\right\}^{1/2}$. The $L_r$-operator norm of $\bmA$ is defined as $\|\bmA\|_{(r,r)} = \sup\{\|\bmA\bx\|_r:\|\bx\|_r = 1\}.$ This gives the $L_1$-operator norm as $\|\bmA\|_{(1,1)} = \max_{1\leq j \leq p}\sum_{i=1}^{p}|a_{ij}|$, and the $L_2$-operator norm as $\|\bmA\|_{(2,2)} = [\max_{1 \leq i \leq p}\{\eig_i(\bmA^T \bmA)\}]^{1/2}$, so that, for symmetric matrices, $\|\bmA\|_{(2,2)} = \max_{1 \leq i \leq p}|\eig_i(\bmA)|$. For a symmetric $p$-dimensional matrix $\bmA$, we have, $\|\bmA\|_\infty \leq \|\bmA\|_{(2,2)} \leq \|\bmA\|_2 \leq p\|\bmA\|_\infty$, and $\|\bmA\|_{(2,2)} \leq \|\bmA\|_{(1,1)}$. For a positive definite matrix $\bmA$, $\bmA^{1/2}$ denotes its unique positive square root. The diagonal matrix with the same diagonal as a matrix $\bmA$ is denoted by $\bmA^+$, and $\bmA^-$ denotes the matrix $\bmA - \bmA^+$. The linear space of $p \times p$ symmetric matrices is denoted by $\mathcal{M}_p,$ and $\mathcal{M}_p^+ \subset \mathcal{M}_p$ is the cone of symmetric positive definite matrices of dimension $p\times p$. 
	
	The indicator function is denoted by $\Ind.$ We denote the cardinality of a finite set $\mathcal{S}$ by $\#\mathcal{S}$. The Hellinger distance between two probability densities $f$ and $g$ is defined as $h(f,g) = \|f^{1/2} - g^{1/2}\|_2.$
	
	\section{The marginal graphical horseshoe-like prior and implications for estimation algorithms}
	\label{supp-ghs-prior-details}
	
	The graphical horseshoe-like prior on the individual off-diagonal elements $\omega_{ij}$ has a nice Gaussian scale mixture representation as outlined in Section~\ref{prior_specification}. However, the marginal prior on these elements are not horseshoe-like, owing to the positive definite constraint on the precision matrix $\bOmega$. In this section, we argue that the hierarchical representation based on the scale-mixtures induces the proposed marginal prior on $\bOmega$ and all the related marginal and conditional distributions are proper. Alongside this, we also argue that the intractable normalizing constant in the marginal prior of $\bOmega$ does not affect the conditional expectation calculations for executing the expectation conditional maximization steps in our computations.
	
	The marginal prior on $\bOmega$ given the global scale parameter $a$ can be written as,
	\begin{equation}
		\label{supp:marginal-prior-Omega}
		\pi(\bOmega \mid a) = C(a)^{-1}\prod_{i < j}\pi(\omega_{ij}\mid a)\Ind_{\mathcal{M}_p^+}(\bOmega),
	\end{equation}
	where $C(a)$ is the normalizing constant depending on $a$. Using the Gaussian scale-mixture representation, we have a hierarchical representation of the above prior as,
	\begin{equation}
		\label{supp:marginal-prior-1}
		\pi(\bOmega \mid \nu, a) = C(\nu,a)^{-1}\prod_{i < j} \pi(\omega_{ij} \mid \nu_{i,j},a) \Ind_{\mathcal{M}_p^+}(\bOmega),
	\end{equation}
	where $C(\nu,a)$ is an intractable constant depending on $\nu$ and $a$. The prior on $\nu$ is,
	\begin{equation}
		\label{supp:marginal-prior-2}
		\pi(\nu) \propto C(\nu,a) \prod_{i < j}\pi(\nu_{ij}) = C_2(a)^{-1}C(\nu,a)\prod_{i < j}\pi(\nu_{ij}),
	\end{equation}
	where $C_2(a)$ is a constant such that
	\begin{equation*}
		C_2(a) = \int C(\nu,a)\prod_{i<j} \pi(\nu_{ij})\,d\nu.
	\end{equation*}
	The constant $C(a)$ in \eqref{supp:marginal-prior-Omega} is finite because,
	\begin{equation*}
		C(a) = \int \prod_{i < j}\pi(\omega_{ij}\mid a)\Ind_{\mathcal{M}_p^+}(\bOmega)\, d(\omega_{ij})_{i \leq j} 
		< 2K \int \prod_{i < j}\pi(\omega_{ij}\mid a)\, d(\omega_{ij})_{i < j} < \infty,
	\end{equation*}
	where $K < \infty$ is such that $|\omega_{ii}| < K, (i = 1,\ldots,p)$, since $\bOmega$ is restricted to be positive definite and hence the diagonal elements are finite.
	Also, 
	\begin{equation*}
		C(\nu,a) = \int \prod_{i < j} \pi(\omega_{ij} \mid \nu_{i,j},a) \Ind_{\mathcal{M}_p^+}(\bOmega)\,d(\omega_{ij})_{i \leq j}
		< 2K \int \prod_{i < j}\pi(\omega_{ij} \mid \nu_{i,j},a)\, d(\omega_{ij})_{i < j} < \infty. 
	\end{equation*}
	The induced marginal prior on $\bOmega$ based on the hierarchical representation as in (\ref{supp:marginal-prior-1}) and (\ref{supp:marginal-prior-2}) is,
	\begin{equation*}
		\pi^*(\bOmega \mid a) = C_2(a)^{-1}\prod_{i < j}\pi(\omega_{ij}\mid a)\Ind_{\mathcal{M}_p^+}(\bOmega).
	\end{equation*}
	Since $\int \pi(\bOmega \mid a)\,d\bOmega = \int \pi^*(\bOmega \mid a)\, d\bOmega = 1$, it immediately implies that $C(a) = C_2(a).$ Thus, the intractable constant $C(\nu,a)$ in (\ref{supp:marginal-prior-1}) and (\ref{supp:marginal-prior-2}) cancels out in the hierarchical representation, so as to arrive at the induced marginal prior (\ref{supp:marginal-prior-Omega}). The above results also establish that the priors $\pi(\bOmega \mid a), \pi(\bOmega \mid \nu, a),$ and $\pi(\nu)$ are proper.
	
	We now show that it suffices to consider the component-wise scale-mixture representation of the horseshoe-like prior to find the conditional expectation of the latent parameters $\nu_{ij}$ in the expectation step (see equation~\ref{e_step}) of the expectation conditional maximization algorithm. 
	The conditional distribution of $\nu$ given $\bOmega$ and $a$ can be written as,  
	\begin{eqnarray}
		\pi(\nu \mid \bOmega, a) &=& \dfrac{\pi(\bOmega, \nu \mid a)}{\pi(\bOmega \mid a)}  = \dfrac{\pi(\bOmega \mid  \nu, a)\pi(\nu)}{\pi(\bOmega \mid a)}\nonumber \\
		& = & \dfrac{\prod_{i<j} \pi(\omega_{ij} \mid \nu_{ij},a) \prod_{i < j}\pi(\nu_{ij})\Ind_{\mathcal{M}_p^+}(\bOmega)}{\prod_{i<j} \pi(\omega_{ij}\mid a) \Ind_{\mathcal{M}_p^+}(\bOmega)}. \nonumber
	\end{eqnarray}
	This gives,
	\begin{equation*}
		\pi(\nu_{ij} \mid \bOmega, a) = \dfrac{\pi(\omega_{ij} \mid \nu_{ij},a) \pi(\nu_{ij})}{\pi(\omega_{ij}\mid a)} \Ind_{\mathcal{M}_p^+}(\bOmega).
	\end{equation*}
	Thus, the expectation step (\ref{e_step}) holds given that the conditional maximization step produces positive definite estimates of $\bOmega$ in each iteration.

	\section{Auxiliary Lemmas}\label{sec:app-aux}
	\begin{lemma}
		\label{lemma:KL}
		Let $p_k$ denote the density of a $\mathcal{N}_d(\zerovec,\bSigma_k)$ random variable, $k = 1,2.$ Denote the corresponding precision matrices by $\bOmega_k = \bSigma_k^{-1},k=1,2.$ Then,
		\begin{eqnarray}
			\mathbb{E}_{p_1}\left\lbrace \log \dfrac{p_1}{p_2}(\bX)\right \rbrace  &=& \dfrac{1}{2}\left\lbrace \log \det \bOmega_1 - \log \det \bOmega_2 + \tr(\bOmega^{-1}_1\bOmega_2) - d\right\rbrace, \nonumber \\
			\Var_{p_1}\left\lbrace \log \dfrac{p_1}{p_2}(\bX)\right \rbrace &=& \dfrac{1}{2}\, \tr\{(\bOmega_1^{-1/2}\bOmega_2\bOmega_1^{-1/2} - \bmI_d)^2\}. \nonumber
		\end{eqnarray}
	\end{lemma}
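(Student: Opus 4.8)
The plan is to compute both quantities directly from the explicit centered-Gaussian log-density ratio. First I would write, using $p_k(\bx) = (2\pi)^{-d/2}(\det\bOmega_k)^{1/2}\exp(-\tfrac{1}{2}\bx^T\bOmega_k\bx)$,
\[
\log\frac{p_1}{p_2}(\bx) = \frac{1}{2}\left(\log\det\bOmega_1 - \log\det\bOmega_2\right) - \frac{1}{2}\bx^T(\bOmega_1 - \bOmega_2)\bx,
\]
so that the randomness enters only through the single quadratic form $\bX^T(\bOmega_1 - \bOmega_2)\bX$. The deterministic log-determinant prefactor contributes only to the mean and is irrelevant for the variance.

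For the mean, I would invoke the identity $\mathbb{E}_{p_1}[\bX^T\bmA\bX] = \tr(\bmA\bOmega_1^{-1})$, valid for any matrix $\bmA$ since $\bX$ has mean $\zerovec$ and covariance $\bOmega_1^{-1}$. Taking $\bmA = \bOmega_1 - \bOmega_2$ gives $\mathbb{E}_{p_1}[\bX^T(\bOmega_1 - \bOmega_2)\bX] = d - \tr(\bOmega_1^{-1}\bOmega_2)$. Substituting into the display and collecting terms yields the claimed expression for $\mathbb{E}_{p_1}\{\log(p_1/p_2)(\bX)\}$.

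For the variance, I would first whiten: writing $\bX = \bOmega_1^{-1/2}\bZ$ with $\bZ \sim \mathcal{N}_d(\zerovec, \bmI_d)$, the quadratic form becomes $\bZ^T\bm{B}\bZ$ with the symmetric matrix $\bm{B} = \bmI_d - \bOmega_1^{-1/2}\bOmega_2\bOmega_1^{-1/2}$. I would then apply the standard variance formula for a Gaussian quadratic form, $\Var(\bZ^T\bm{B}\bZ) = 2\tr(\bm{B}^2)$ for symmetric $\bm{B}$, giving $\Var_{p_1}[\bX^T(\bOmega_1 - \bOmega_2)\bX] = 2\tr\{(\bOmega_1^{-1/2}\bOmega_2\bOmega_1^{-1/2} - \bmI_d)^2\}$. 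Multiplying by the factor $\tfrac{1}{4}$ arising from the squared $-\tfrac12$ prefactor delivers the stated result.

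There is no serious obstacle here; the computation is routine once the log-ratio is expanded. The only point requiring mild care is the variance of the Gaussian quadratic form: I would either cite the standard result or re-derive $\Var(\bZ^T\bm{B}\bZ) = 2\tr(\bm{B}^2)$ from the fourth-moment (Isserlis/Wick) identity for standard normals, noting that the symmetry of $\bm{B}$ permits replacing $\tr(\bm{B}^T\bm{B})$ by $\tr(\bm{B}^2)$. The whitening step is precisely what produces the symmetric congruence $\bOmega_1^{-1/2}\bOmega_2\bOmega_1^{-1/2}$ appearing in the lemma.
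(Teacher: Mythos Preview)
Your proposal is correct and follows essentially the same approach as the paper: both compute the log-density ratio as a constant plus a quadratic form in $\bX$, then apply the standard Gaussian quadratic-form identities $\mathbb{E}(\bX^T\Lambdamat\bX)=\tr(\Lambdamat\bSigma)$ and $\Var(\bX^T\Lambdamat\bX)=2\tr(\Lambdamat\bSigma\Lambdamat\bSigma)$. The only cosmetic difference is that you whiten first via $\bX=\bOmega_1^{-1/2}\bZ$ to reduce to the identity-covariance case, whereas the paper applies the general-covariance formula directly and then simplifies $\tr\{(\bOmega_2-\bOmega_1)\bOmega_1^{-1}(\bOmega_2-\bOmega_1)\bOmega_1^{-1}\}$ to $\tr\{(\bOmega_1^{-1/2}\bOmega_2\bOmega_1^{-1/2}-\bmI_d)^2\}$; these are equivalent computations.
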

	
	\begin{proof}
		Let us define $\bmA = \bOmega_1^{-1/2}\bOmega_2\bOmega^{-1/2}.$ Note that, for a random variable $\bm{Z} \sim \mathcal{N}_d(\zerovec,\bSigma)$, we have,
		$$\mathbb{E}(\Zmat^T\Lambdamat \Zmat) = \tr(\Lambdamat \bSigma),\; \Var(\Zmat^T \Lambdamat \Zmat) = 2\,\tr(\Lambdamat \bSigma \Lambdamat \bSigma).$$
		Then, for $\bX \sim \mathcal{N}_d(\zerovec,\bSigma_1)$,
		\begin{eqnarray}
			\mathbb{E}_{p_1}\left\lbrace \log \dfrac{p_1}{p_2}(\bX)\right \rbrace  &=& \dfrac{1}{2} \left\lbrace \log \det \bOmega_1 - \log \det \bOmega_2 + \mathbb{E}_{p_1}(\bX^T(\bOmega_2 - \bOmega_1)\bX)\right\rbrace \nonumber \\
			&=& \dfrac{1}{2} \left\lbrace \log \det \bOmega_1 - \log \det \bOmega_2 + \tr[(\bOmega_2 - \bOmega_1)\bSigma_1] \right\rbrace \nonumber \\
			&=& \dfrac{1}{2}\left\lbrace \log \det \bOmega_1 - \log \det \bOmega_2 + \tr(\bOmega^{-1}_1\bOmega_2) - d\right\rbrace. \nonumber
		\end{eqnarray}
		Also, 
		\begin{eqnarray}
			\Var_{p_1}\left\lbrace \log \dfrac{p_1}{p_2}(\bX)\right \rbrace &=& \mathbb{E}_{p_1}\left[ \log \dfrac{p_1}{p_2}(\bX) - \mathbb{E}_{p_1}\left\lbrace \log \dfrac{p_1}{p_2}(\bX)\right \rbrace \right]^2 \nonumber \\
			&=& \dfrac{1}{4}\mathbb{E}_{p_1}\{\bX^T(\bOmega_2 - \bOmega_1)\bX - \mathbb{E}_{p_1}(\bX^T(\bOmega_2 - \bOmega_1)\bX) \}^2 \nonumber \\
			&=& \dfrac{1}{4}\Var_{p_1}\{\bX^T(\bOmega_2 - \bOmega_1)\bX\} \nonumber \\
			&=& \dfrac{1}{4}2\,\tr\{(\bOmega_2-\bOmega_1)\bOmega_1^{-1}(\bOmega_2-\bOmega_1)\bOmega_1^{-1}\} \nonumber \\
			&=& \dfrac{1}{2}\,\tr\{(\bOmega_1^{-1/2}\bOmega_2\bOmega_1^{-1/2} - \bmI_d)^2\}. \nonumber
		\end{eqnarray}
	\end{proof}
	
	\begin{lemma}
		\label{lemma:prior-prob}
		Consider the horseshoe-like prior $\pi(\theta \mid a).$ Then, for the global shrinkage parameter $a$ satisfying the condition $a < p^{-2b_1}/n$ for some constant $b_1 > 0, \nu > 0$, we have,
		\begin{equation}
			\label{eqn:lemma-prior-prob-1}
			1 - \int_{-\eps_n/p^\nu}^{\eps_n/p^\nu}\pi(\theta \mid a)\,d\theta \leq p^{-b'_1},
		\end{equation}
		for some constants $\nu, b'_1 > 0$.
		Additionally, for some sufficiently large constant $B \sim b_2n\eps_n^2$, if the global scale parameter satisfies the condition $a/B^2 < p^{-2b_3}$ for some constant $b_3 > 0$, we have,
		\begin{equation}
			\label{eqn:lemma-prior-prob-2}
			-\log\left( \int_{|\theta| > B}\pi(\theta \mid a)\,d\theta \right) \gtrsim B.
		\end{equation}
	\end{lemma}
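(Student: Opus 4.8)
The plan is to exploit the scale structure of the horseshoe-like density together with the single elementary inequality $\log(1+x)\le x$ for $x\ge 0$, which linearizes the density in its tail and is tight precisely when $a/\theta^2$ is small. Writing $\theta=a^{1/2}U$ transforms $\pi(\theta\mid a)=\log(1+a/\theta^2)/(2\pi a^{1/2})$ into the fixed density $g(u)=\log(1+1/u^2)/(2\pi)$ for $U$, which behaves like $1/(2\pi u^2)$ in the tail; hence $\pi(\theta\mid a)\sim a^{1/2}/(2\pi\theta^2)$ and every exceedance probability reduces to a rescaled tail of $U$. This picture makes clear that all the estimates come from bounding $\int_c^\infty \theta^{-2}\,d\theta$ after the linearization.

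For \eqref{eqn:lemma-prior-prob-1}, the left-hand side is $\Pr(|\theta|>\delta_n)$ with $\delta_n=\eps_n/p^\nu$. I would first check that $a/\theta^2\le a/\delta_n^2\to 0$ on the range of integration: using $a<p^{-2b_1}/n$ and $\eps_n=n^{-1/2}(p+s)^{1/2}(\log n)^{1/2}\ge n^{-1/2}p^{1/2}$ gives $a/\delta_n^2\lesssim p^{2\nu-2b_1-1}/\log n$, which vanishes as soon as $\nu<b_1+1/2$. The inequality $\log(1+x)\le x$ then yields
$$
\Pr(|\theta|>\delta_n)\le \frac{a^{1/2}}{\pi\,\delta_n}=\frac{a^{1/2}p^\nu}{\pi\,\eps_n}\lesssim p^{\nu-b_1-1/2},
$$
so \eqref{eqn:lemma-prior-prob-1} holds with $b_1'=b_1+1/2-\nu>0$ for any admissible $\nu\in(0,\,b_1+1/2)$.

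For \eqref{eqn:lemma-prior-prob-2} the same linearization on $[B,\infty)$ gives the clean bound $\int_{|\theta|>B}\pi(\theta\mid a)\,d\theta\le a^{1/2}/(\pi B)$, whence
$$
-\log\!\left(\int_{|\theta|>B}\pi(\theta\mid a)\,d\theta\right)\ge \tfrac{1}{2}\log(1/a)+\log B-\log\pi .
$$
The role of the hypothesis $a/B^2<p^{-2b_3}$ is exactly to make this usable: it gives $\tfrac12\log(1/a)\ge b_3\log p-\log B$, so the $\log B$ terms cancel and the right-hand side is bounded below by $b_3\log p-\log\pi$.

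The step I expect to be the main obstacle is the final bound in \eqref{eqn:lemma-prior-prob-2}. Because the horseshoe-like prior has the heavy, polynomially decaying tail $\pi(\theta\mid a)\sim a^{1/2}/(2\pi\theta^2)$ — the very feature that makes it attractive for signal recovery — its exceedance probability decays only polynomially, and the negative logarithm of a polynomial tail is merely of logarithmic order. The tail mass alone therefore cannot produce a bound of order $B$ with $B\sim b_2 n\eps_n^2$ polynomial in $p$; the growth must instead be supplied by the $\tfrac12\log(1/a)$ term, so the decisive work lies in tracking the precise dependence on $a$ and in verifying that the imposed condition on the global scale parameter is strong enough to absorb the factor $\log B$ inside the logarithm and leave a lower bound of the claimed order. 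Managing this interplay between the heavy-tailed prior and the small global scale $a$, rather than the routine integration, is where I would concentrate the effort.
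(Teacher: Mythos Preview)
Your approach coincides with the paper's in every essential respect. Both parts rest solely on the inequality $\log(1+x)\le x$ applied pointwise to $\pi(\theta\mid a)=\log(1+a/\theta^2)/(2\pi a^{1/2})$, followed by integrating the resulting $a^{1/2}/(2\pi\theta^2)$ tail. For \eqref{eqn:lemma-prior-prob-1} the paper obtains precisely your bound of order $a^{1/2}p^\nu/\eps_n$ and concludes from $a^{1/2}<n^{-1/2}p^{-b_1}$ with $b_1'\le b_1-\nu$; your sharper $b_1'=b_1+\tfrac12-\nu$ simply retains the extra factor $p^{1/2}$ hidden in $\eps_n$.

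For \eqref{eqn:lemma-prior-prob-2} the paper proceeds exactly as you do: from $\int_{|\theta|>B}\pi(\theta\mid a)\,d\theta\lesssim a^{1/2}/B<p^{-b_3}$ it writes $p^{-b_3}=\exp(-b_3\log p)$ and then asserts $\exp(-b_3\log p)\lesssim\exp(-b_2n\eps_n^2)$ to finish. Your hesitation at this final assertion is warranted. Since $n\eps_n^2=(p+s)\log n\asymp(p+s)\log p$, the comparison $b_3\log p\gtrsim n\eps_n^2$ would require $b_3$ of order $p+s$ rather than a fixed constant, and the paper supplies no device beyond what you have already written. You have therefore reproduced the paper's argument in full and correctly isolated the point at which the heavy polynomial tail of the prior limits the negative log-exceedance probability to order $\log p$; under the stated hypothesis on $a$ with constant $b_3$, the claimed order-$B$ lower bound does not follow from this computation alone.
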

	
	\begin{proof}
		We have,
		\begin{eqnarray}
			1 - \int_{-\eps_n/p^\nu}^{\eps_n/p^\nu}\pi(\theta \mid a)\,d\theta &=& \int_{|\theta| > \eps_n/p^\nu}\pi(\theta \mid a)\,d\theta \nonumber \\
			&=& \int_{|\theta| > \eps_n/p^\nu} \dfrac{1}{2\pi a^{1/2}}\log \left(1 + \dfrac{a}{\theta^2}\right)\,d\theta \nonumber \\
			&=& 2 \int_{\eps_n/p^\nu}^{\infty} \dfrac{1}{2\pi a^{1/2} }\log \left(1 + \dfrac{a}{\theta^2}\right)\,d\theta \nonumber \\
			& \leq & \int_{\eps_n/p^\nu}^{\infty} \dfrac{1}{\pi a^{1/2} } \dfrac{a}{\theta^2}\,d\theta = \dfrac{2}{\pi}\dfrac{a^{1/2}p^\nu}{\eps_n}. \nonumber
		\end{eqnarray}
		Note that, for $a^{1/2} < n^{-1/2}p^{-b_1}$, the right hand side of the display above is bounded by $p^{-b_1'}$, for $0< b_1' \leq b_1 - \nu.$ This proves the first part of the lemma. For the second part, note that,
		\begin{equation*}
			\int_{|\theta| > B}\pi(\theta \mid a)\,d\theta \leq  \dfrac{2}{\pi}\dfrac{a^{1/2}}{B}.
		\end{equation*}
		Hence, for the condition $a^{1/2}/B < p^{-b_3},$ we have,
		\begin{equation*}
			\int_{|\theta| > B}\pi(\theta \mid a)\,d\theta \lesssim  p^{-b_3} = \exp(-b_3\log p) \lesssim  \exp(-b_2n\eps_n^2),
		\end{equation*}
		which implies that, for $B \sim b_2n\eps_n^2,$
		$$-\log\left( \int_{|\theta| > B}\pi(\theta \mid a)\,d\theta \right) \gtrsim B.$$ 
		This completes the proof.
	\end{proof}
	
	\begin{corollary}
		\label{corr:GHS-prior-prob}
		The above lemma holds true under the same conditions on the global shrinkgae parameter for the horseshoe prior as well. 
	\end{corollary}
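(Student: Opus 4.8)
The plan is to exploit the fact that, although the horseshoe prior of \citet{carvalho2010horseshoe} has no elementary closed form, its marginal density is sandwiched between explicit logarithmic bounds of exactly the same functional shape as the horseshoe-like density appearing in Lemma~\ref{lemma:prior-prob}. Writing $\tau$ for the global scale of the horseshoe and aligning the two parametrizations via $a \leftrightarrow \tau^2$ (so $a^{1/2}$ and $\tau$ coincide), the tight bounds of \citet{carvalho2010horseshoe}, together with the location--scale relation $\pi_{\mathrm{HS}}(\theta \mid \tau) = \tau^{-1}\pi_{\mathrm{HS}}(\theta/\tau \mid 1)$, give
$$
\frac{K}{2\tau}\log\left(1 + \frac{4\tau^2}{\theta^2}\right) < \pi_{\mathrm{HS}}(\theta \mid \tau) < \frac{K}{\tau}\log\left(1 + \frac{2\tau^2}{\theta^2}\right), \qquad K = (2\pi^3)^{-1/2}.
$$
The upper bound here is, up to the multiplicative constant $K$ and the constant inside the logarithm, identical in form to the horseshoe-like density $\pi(\theta \mid a) = (2\pi a^{1/2})^{-1}\log(1 + a/\theta^2)$. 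Since both parts of Lemma~\ref{lemma:prior-prob} estimate a tail probability from above, only the upper density bound is needed, and the two inequalities will follow for the horseshoe by rerunning the same calculations.

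Concretely, I would first apply $\log(1+x) \le x$ to the upper bound above to obtain $\pi_{\mathrm{HS}}(\theta \mid \tau) \lesssim \tau/\theta^2$, the exact analogue of the bound $\pi(\theta \mid a) \lesssim a^{1/2}/\theta^2$ used in the proof of Lemma~\ref{lemma:prior-prob}. Integrating over $|\theta| > \eps_n/p^\nu$ then yields a bound of order $\tau p^\nu/\eps_n$, which under the condition $\tau < n^{-1/2}p^{-b_1}$ (the same condition on the global scale, now phrased in terms of $\tau$) is at most $p^{-b_1'}$ for $0 < b_1' \le b_1 - \nu$; this reproduces (\ref{eqn:lemma-prior-prob-1}). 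For the second part, integrating the same upper bound over $|\theta| > B$ gives a bound of order $\tau/B$, so that $\tau/B < p^{-b_3}$ (equivalently $a/B^2 < p^{-2b_3}$) together with the choice $B \sim b_2 n \eps_n^2$ delivers $-\log \int_{|\theta| > B}\pi_{\mathrm{HS}}(\theta \mid \tau)\,d\theta \gtrsim B$, reproducing (\ref{eqn:lemma-prior-prob-2}). The only changes relative to the horseshoe-like computation are the numerical constants, which are harmless because they are absorbed into the implicit constants in $\lesssim$ and $\gtrsim$.

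The main obstacle, and the only genuinely new ingredient, is justifying the density sandwich itself: unlike the horseshoe-like prior, the horseshoe density cannot be manipulated directly, so the argument rests entirely on importing the tight bounds of \citet{carvalho2010horseshoe} and verifying the scaling relation that transfers the unit-scale bounds to arbitrary $\tau$. This scaling step is also what pins down that the global scale $\tau$ of the horseshoe plays precisely the role of $a^{1/2}$ in the horseshoe-like prior, so that the phrase ``the same conditions on the global shrinkage parameter'' translates correctly between the two priors and the stated conditions carry over unchanged.
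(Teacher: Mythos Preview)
Your proposal is correct and takes essentially the same approach as the paper: the paper's proof also invokes the upper bound $p_{\mathrm{HS}}(\theta\mid a) < 2 a^{-1/2}(2\pi)^{-3/2}\log(1+2a/\theta^2)$ from \citet{carvalho2010horseshoe} (which coincides with your bound after identifying $a=\tau^2$), and then simply retraces the steps of Lemma~\ref{lemma:prior-prob} to obtain $\int_{|\theta|>t}p_{\mathrm{HS}}(\theta\mid a)\,d\theta \lesssim a^{1/2}/t$. Your write-up is more explicit about the scaling relation and the role of the constants, but the argument is the same.
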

	
	\begin{proof}
		Note that the prior density of the horseshoe prior satisfies
		\begin{equation}
			p_{HS}(\theta \mid a) < \dfrac{2}{a^{1/2}(2\pi)^{3/2}}\log\left(1 + \dfrac{2a}{\theta^2}\right),
		\end{equation}
		which implies that, retracing the steps in the proof of Lemma~\ref{lemma:prior-prob} above,
		\begin{equation}
			\int_{|\theta| > t}p_{HS}(\theta \mid a)\,d\theta \lesssim \dfrac{a^{1/2} }{t}. 
		\end{equation}
		The result thus follows immediately.
	\end{proof}
	
	We now present the Gershgorin Circle Theorem \citep{brualdi1994regions}, that will be required in the proof of our main result on posterior convergence rate. The actual theorem holds for complex matrices, but we only need the result for real matrices.
	
	\begin{theorem}[Gershgorin Circle Theorem for real matrices]
		\label{thm:Gershgorin}
		Let $\bmA = (\!(a_{ij})\!)$ be a $p$-dimensional real-valued matrix with real eigenvalues. Define $R_i = \sum_{j\neq i}|a_{ij}|,\, i = 1,\ldots,p,$ the row sums of the absolute entries of $\bmA$ excluding the diagonal element. Then, each eigenvalue of $\bmA$ is in at least one of the disks 
		$$\mathcal{D}_i(\bmA) = \{z: | z - a_{ii}| \leq R_i\},\; 1 \leq i \leq p.$$
		Equivalently, the $p$ eigenvalues of $\bmA$ are contained in the region in the real plane determined by
		$$\mathcal{D}(\bmA) = \cup_{i=1}^{p}\mathcal{D}_i(\bmA).$$
	\end{theorem}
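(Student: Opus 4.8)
The plan is to use the standard eigenvector-component argument. Let $\lambda$ be any (real) eigenvalue of $\bmA$ with a corresponding eigenvector $\bx = (x_1,\ldots,x_p)^T \neq \zerovec$, so that $\bmA \bx = \lambda \bx$. First I would select the coordinate of largest magnitude: let $i \in \{1,\ldots,p\}$ be an index with $|x_i| = \max_{1 \leq k \leq p}|x_k|$. Since $\bx \neq \zerovec$, we have $|x_i| > 0$; this is the only point in the argument requiring a moment's care, and it is guaranteed precisely because the eigenvector is nonzero.

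Next I would read off the $i$-th row of the identity $\bmA \bx = \lambda \bx$, which gives $\sum_{j=1}^p a_{ij} x_j = \lambda x_i$, and rearrange to isolate the diagonal term:
$$(\lambda - a_{ii}) x_i = \sum_{j \neq i} a_{ij} x_j.$$
Taking absolute values and applying the triangle inequality, followed by the bound $|x_j| \leq |x_i|$ valid for every $j$ by the choice of $i$, yields
$$|\lambda - a_{ii}|\,|x_i| \leq \sum_{j\neq i}|a_{ij}|\,|x_j| \leq \left(\sum_{j \neq i}|a_{ij}|\right)|x_i| = R_i\,|x_i|.$$
Dividing through by $|x_i| > 0$ gives $|\lambda - a_{ii}| \leq R_i$, i.e., $\lambda \in \mathcal{D}_i(\bmA)$, so $\lambda$ lies in at least one Gershgorin disk.

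Since $\lambda$ was an arbitrary eigenvalue and $\mathcal{D}_i(\bmA) \subseteq \mathcal{D}(\bmA) = \cup_{i=1}^p \mathcal{D}_i(\bmA)$, every eigenvalue lies in $\mathcal{D}(\bmA)$, which establishes the equivalent global statement. I do not anticipate a genuine obstacle here: the argument is entirely elementary and self-contained, relying only on the eigenvalue equation and the triangle inequality. The only thing to be careful about is the choice of the maximal-magnitude coordinate, which simultaneously makes the final division legitimate and ensures the term-by-term inequality $|x_j| \leq |x_i|$; working with a non-extremal coordinate would fail to produce the required bound.
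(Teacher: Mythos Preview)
Your proposal is correct and follows essentially the same argument as the paper: pick an eigenvector, select the coordinate of maximal modulus, read off that row of the eigenvalue equation, and apply the triangle inequality together with $|x_j|\le |x_i|$ to obtain $|\lambda-a_{ii}|\le R_i$. The only cosmetic difference is notation (the paper uses $m$ for the extremal index) and that you make the division by $|x_i|>0$ explicit.
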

	
	\begin{proof}
		The eigenvalue equation for $\bmA$ is given by $\bmA \bx = \lambda \bx$, where $\lambda$ is an eigenvalue of $\bmA$ and $\bx = (x_1,\ldots,x_p)^T \in \RR^p$ is the corresponding non-zero eigenvector. Let us consider $1 \leq m \leq p$ such that $|x_m| = \|\bx\|_\infty.$ Then, the above eigenvalue equation implies that, $\sum_{j=1}^{p}a_{mj}x_j = \lambda x_m.$ Rearranging the terms, we get,
		$\sum_{j \neq m}a_{mj}x_j = (\lambda - a_{mm})x_m,$ which implies that,
		$$|\lambda - a_{mm}||x_m| = \left| \sum_{j \neq m}a_{mj}x_j \right| \leq \sum_{j \neq m}|a_{mj}||x_j| \leq |x_m| \sum_{j \neq m}|a_{mj}|.$$
		Hence, for any eigenvalue $\lambda$ of $\bmA$, we have, $|\lambda - a_{mm}| \leq \sum_{j \neq m}|a_{mj}|.$ Thus, each of the $p$ eigenvalues of $\bmA$ must lie in at least one of the disks $\mathcal{D}_i(\bmA)$ as defined in the theorem above. This completes the proof. 
	\end{proof}
	
	\begin{lemma}
		\label{lemma:prior-constraint}
		For the graphical horseshoe-like prior \eqref{prior}, under the assumption that the global scale parameter satisfies the condition $a^{1/2} < L^{-1}n^{-1/2}p^{-(2+u)},\, u > 0,$ the prior probability owing to the constraint $\bOmega \in \mathcal{M}_p^+(L)$ has the lower bound
		\begin{equation}
			\label{eqn:constrined-omega-final}
			\Pi(\bOmega \in \mathcal{M}_p^+(L)) \gtrsim L^p \exp\left(-C_1 n^{-1/2}p\right),
		\end{equation}
		for some suitable constant $C_1 > 0.$
	\end{lemma}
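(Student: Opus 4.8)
The plan is to bound the prior mass $\Pi(\bOmega \in \mathcal{M}_p^+(L))$ from below --- this is the normalizing integral $\int_{\mathcal{M}_p^+(L)}\prod_{i<j}\pi(\omega_{ij}\mid a)\,d\bOmega$, in which the diagonal entries carry flat priors --- by restricting the integral to a convenient diagonally dominant sub-event on which membership in $\mathcal{M}_p^+(L)$ is automatic. Fixing a threshold $\delta \asymp L^{-1}$, I would take
\[
E = \Bigl\{ \bOmega : \omega_{ii} \in [L^{-1}+\delta,\ L-\delta]\ \text{for all } i,\quad |\omega_{ij}| \leq \delta/(p-1)\ \text{for all } i<j \Bigr\}.
\]
On $E$ each off-diagonal row sum obeys $R_i = \sum_{j \neq i}|\omega_{ij}| \leq \delta$, so the Gershgorin Circle Theorem (Theorem~\ref{thm:Gershgorin}) confines every eigenvalue of $\bOmega$ to within $\delta$ of some diagonal entry, hence to $[L^{-1},L]$. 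Thus $E \subseteq \mathcal{M}_p^+(L)$, and it suffices to lower bound the mass of $E$.

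Since the horseshoe-like factors attach only to the off-diagonal entries while the diagonal prior is flat, the integral over $E$ factorizes. The diagonal coordinates contribute the Lebesgue volume $(L - L^{-1} - 2\delta)^p$ of their product interval, which is of order $L^p$ up to a deficit discussed below, and the off-diagonal coordinates contribute $\prod_{i<j}\int_{|\theta| \leq \delta/(p-1)}\pi(\theta\mid a)\,d\theta$. Each factor is handled by the tail estimate derived inside Lemma~\ref{lemma:prior-prob}, namely $\int_{|\theta|>t}\pi(\theta\mid a)\,d\theta \leq (2/\pi)a^{1/2}/t$, which gives $\int_{|\theta|\leq \delta/(p-1)}\pi(\theta\mid a)\,d\theta \geq 1 - (2/\pi)a^{1/2}(p-1)/\delta$. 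Multiplying over the $\binom{p}{2}$ off-diagonal entries and using $\log(1-x) \geq -2x$ for small $x$ produces the lower bound $\exp\{-C\,a^{1/2}p^3/\delta\}$. With $\delta \asymp L^{-1}$ and the hypothesis $a^{1/2} < L^{-1}n^{-1/2}p^{-(2+u)}$, one checks $a^{1/2}p^3/\delta \lesssim n^{-1/2}p^{1-u} = o(n^{-1/2}p)$, so the off-diagonal product is at least $\exp(-C_1 n^{-1/2}p)$. Combining the two factors yields $\Pi(\bOmega \in \mathcal{M}_p^+(L)) \gtrsim L^p \exp(-C_1 n^{-1/2}p)$, as claimed.

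The main obstacle is the coupling between the admissible diagonal range and the admissible off-diagonal magnitude, both dictated by the eigenvalue window $[L^{-1},L]$ through the single Gershgorin radius $\delta$. Shrinking $\delta$ sharpens the eigenvalue containment and keeps the diagonal volume near $L^p$, but it simultaneously forces $a$ to be smaller; the choice $\delta \asymp L^{-1}$ is precisely what reconciles the Gershgorin requirement with the $L^{-1}p^{-(2+u)}$ scaling assumed for $a^{1/2}$, the surplus $p^{-u}$ supplying the slack that absorbs the $O(p^3)$ entry count into an exponent of order $n^{-1/2}p$. A secondary technical point is the extraction of the clean factor $L^p$ from $(L - L^{-1} - 2\delta)^p = L^p(1 - L^{-2} - 2\delta/L)^p$: the deficit $(1 - cL^{-2})^p$ is negligible relative to $\exp(-C_1 n^{-1/2}p)$ once $L^{-2} = O(n^{-1/2})$, which is the regime in which the stated bound is meant to be read.
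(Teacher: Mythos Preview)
Your argument is correct and follows essentially the same route as the paper: both restrict to a Gershgorin-based diagonally dominant sub-event with off-diagonal threshold of order $(Lp)^{-1}$, factor the mass into a diagonal Lebesgue volume and an off-diagonal product, and bound the latter via the tail estimate $\int_{|\theta|>t}\pi(\theta\mid a)\,d\theta \lesssim a^{1/2}/t$ combined with the hypothesis on $a^{1/2}$. Your closing remark on the deficit $(1-cL^{-2})^p$ in extracting $L^p$ is a legitimate caveat that the paper glosses over by simply writing $\sim L^p$; in the downstream application this extra $c^p$ factor is harmless because $p\log c$ is dominated by $n\eps_n^2 \asymp (p+s)\log p$.
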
 
	
	\begin{proof}
		We shall use the Gershgorin Circle theorem presented in Theorem~\ref{thm:Gershgorin}. Each of the eigenvalues of $\bOmega$, given by $\eig_1(\bOmega)\leq \cdots \leq \eig_p(\bOmega)$, lies in the interval $\cup_{j=1}^p\left[ \omega_{jj} \mp \sum_{k = 1, k \neq j}^{p}|\omega_{kj}|\right].$ This implies,
		$$\Pi(\bOmega \in \mathcal{M}_p^+(L)) \geq \Pi\left(\min_j(\omega_{jj} - \sum_{k = 1, k \neq j}^{p}|\omega_{kj}|) > 0, \bOmega \in \mathcal{M}_p^+(L) \right).$$ 
		For the constraint that $\min_j(\omega_{jj} - \sum_{k = 1, k \neq j}^{p}|\omega_{kj}|) > 0$, $$\eig_p(\bOmega) = \|\bOmega\|_{(2,2)} \leq \|\bOmega\|_{(1,1)} = \max_j(\omega_{jj} + \sum_{k = 1, k \neq j}^{p}|\omega_{kj}|) \leq 2\max_j \omega_{jj},$$ and,
		$$\eig_1(\bOmega) \geq \min_j(\omega_{jj} - \sum_{k = 1, k \neq j}^{p}|\omega_{kj}|).$$
		Thus,
		\begin{eqnarray}
			\label{eqn:constrained-omega-1}
			&&\Pi(\bOmega \in \mathcal{M}_p^+(L))\nonumber \\
			& \geq & \Pi(L^{-1} \leq \min_j(\omega_{jj} - \sum_{k = 1, k \neq j}^{p}|\omega_{kj}|) \leq 2 \max_j \omega_{jj} \leq L) \nonumber \\
			&\geq & \Pi(L^{-1} \leq \min_j(\omega_{jj} - L^{-1}) \leq 2\max_j \omega_{jj} \leq L \mid \max_{k \neq j}|\omega_{kj}| < (Lp)^{-1})\Pi(\max_{k \neq j}|\omega_{kj}| < (Lp)^{-1}) \nonumber \\
			&=& \Pi(L^{-1} \leq \min_j(\omega_{jj} - L^{-1}) \leq 2\max_j \omega_{jj} \leq L)\Pi(\max_{k \neq j}|\omega_{kj}| < (Lp)^{-1}).
		\end{eqnarray}
		Note that,
		\begin{eqnarray}
			\label{eqn:constrained-omega-2}
			\Pi(L^{-1} \leq \min_j(\omega_{jj} - L^{-1}) \leq 2\max_j \omega_{jj} \leq L)&\geq & \Pi(2L^{-1} \leq \omega_{jj} \leq L/2,\; 1 \leq j \leq p) \nonumber \\
			&=& \prod_{j=1}^{p}\Pi(2L^{-1} \leq \omega_{jj} \leq L/2) \sim L^p.
		\end{eqnarray}
		Also, from (\ref{eqn:lemma-prior-prob-1}) in Lemma~\ref{lemma:prior-prob}, we get,
		\begin{eqnarray}
			\label{eqn:constrained-omega-3}
			\Pi(\max_{k \neq j}|\omega_{kj}| < (Lp)^{-1}) &=& \prod_{k \neq j}\left\lbrace 1 - \Pi(|\omega_{kj}| > (Lp)^{-1}) \right\rbrace \nonumber \\
			&\geq & (1 - C_0 a^{1/2} Lp)^{p^2} \geq \exp\left( -C_1 a^{1/2} Lp^3\right) \nonumber \\
			&\geq & \exp\left(-C_1 n^{-1/2}p\right). 
		\end{eqnarray}
		The last inequality follows from the fact that $a^{1/2} < L^{-1}n^{-1/2}p^{-(2+u)},\, u > 0.$
		Therefore, combining (\ref{eqn:constrained-omega-1}), (\ref{eqn:constrained-omega-2}) and (\ref{eqn:constrained-omega-3}), we get, $\Pi(\bOmega \in \mathcal{M}_p^+(L)) \gtrsim L^p \exp\left(-C_1n^{-1/2}p\right),$ thus completing the proof.
	\end{proof}
	
	\begin{corollary}
		\label{corr:GHS-prior-constraint}
		The above lemma holds true for the graphical horseshoe prior as well under the same conditions on the global shrinkage parameter.
	\end{corollary}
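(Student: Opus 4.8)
The plan is to observe that the proof of Lemma~\ref{lemma:prior-constraint} isolates exactly two prior-specific ingredients, and that both carry over to the horseshoe prior (\ref{hs_prior}) without modification. The Gershgorin-based decomposition used there — bounding the extreme eigenvalues of $\bOmega$ by the diagonally dominant quantities $\min_j(\omega_{jj} - \sum_{k \neq j}|\omega_{kj}|)$ and $2\max_j \omega_{jj}$ via Theorem~\ref{thm:Gershgorin} — is purely structural and makes no reference to the distribution placed on the entries. Consequently the factorization
\[
\Pi(\bOmega \in \mathcal{M}_p^+(L)) \gtrsim \Pi\bigl(2L^{-1} \leq \omega_{jj} \leq L/2,\; 1 \leq j \leq p\bigr)\,\Pi\bigl(\max_{k \neq j}|\omega_{kj}| < (Lp)^{-1}\bigr)
\]
would remain valid verbatim for the horseshoe prior.

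For the first factor, the diagonal entries $\omega_{ii}$ are assigned the same non-informative prior $\omega_{ii} \propto 1$ under both specifications, so the bound $\prod_{j=1}^p \Pi(2L^{-1} \leq \omega_{jj} \leq L/2) \sim L^p$ is unchanged. The only quantity that genuinely depends on the choice between the horseshoe-like and the horseshoe is the off-diagonal tail probability $\Pi(|\omega_{kj}| > (Lp)^{-1})$ governing the second factor, and this is precisely where Corollary~\ref{corr:GHS-prior-prob} enters. That corollary establishes that the horseshoe density satisfies the same polynomial tail estimate $\int_{|\theta| > t} p_{HS}(\theta \mid a)\,d\theta \lesssim a^{1/2}/t$ as the horseshoe-like, so that $\Pi(|\omega_{kj}| > (Lp)^{-1}) \leq C_0\, a^{1/2} Lp$ holds with the same order.

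I would then substitute this into the chain
\[
\Pi\bigl(\max_{k \neq j}|\omega_{kj}| < (Lp)^{-1}\bigr) \geq (1 - C_0\, a^{1/2} Lp)^{p^2} \geq \exp(-C_1\, a^{1/2} L p^3) \geq \exp(-C_1 n^{-1/2}p),
\]
where the final inequality uses the hypothesis $a^{1/2} < L^{-1}n^{-1/2}p^{-(2+u)}$, $u > 0$, exactly as in equation (\ref{eqn:constrained-omega-3}). Combining the two factors reproduces $\Pi(\bOmega \in \mathcal{M}_p^+(L)) \gtrsim L^p \exp(-C_1 n^{-1/2}p)$, which is the claimed bound.

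I do not anticipate a genuine obstacle here, since the corollary follows by inspection of the earlier proof rather than by any new argument. The one point worth checking carefully is that the implicit multiplicative constant in the horseshoe tail bound does not inflate the exponent relative to the horseshoe-like case; this is exactly what Corollary~\ref{corr:GHS-prior-prob} guarantees, by matching the two tail orders up to a constant, so that the same $C_1$ (possibly rescaled) and the same hypothesis on $a^{1/2}$ suffice.
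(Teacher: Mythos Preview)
Your proposal is correct and matches the paper's approach exactly: the paper's proof consists of the single observation that the argument of Lemma~\ref{lemma:prior-constraint} carries over unchanged, with the off-diagonal tail bound supplied by Corollary~\ref{corr:GHS-prior-prob}. You have simply unpacked that observation in more detail, identifying the same two prior-specific ingredients (diagonal prior unchanged, off-diagonal tail handled by Corollary~\ref{corr:GHS-prior-prob}) and confirming that the Gershgorin decomposition is prior-agnostic.
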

	
	\begin{proof}
		The proof of this result is exactly similar to that of Lemma~\ref{lemma:prior-constraint}. The lower bound on the off-diagonal entries follows immediately from Corollary~\ref{corr:GHS-prior-prob}. The rest of the arguments remain intact.
	\end{proof}
	
	\begin{lemma}[Lemma A.3 in \citet{bickel2008regularized}]
		Let $\Zmat_i \stackrel{iid}{\sim} \mathcal{N}_p(\zerovec, \bSigma),\; \mathrm{eig}_p(\bSigma) \leq \varepsilon_0 < \infty.$ Then, if $\bSigma = (\!(\sigma_{ij})\!)$,
		$$\Pr\left[\left|\sum_{i=1}^{n}Z_{ij}Z_{ik} - \sigma_{jk} \right| \geq nt \right] \leq c_1\exp(-c_2nt^2),\, |t| \leq \delta,$$
		where $c_1,c_2$ and $\delta$ depend on $\varepsilon_0$ only.
		\label{lemma:lemma1}
	\end{lemma}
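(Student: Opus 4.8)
The plan is to recognize this as a standard Bernstein-type concentration bound for the (un-normalized) empirical covariance of two jointly Gaussian coordinates, and to prove it by a Chernoff argument after controlling the moment generating function of a single summand near the origin. Fix the indices $j,k$ and set $W_i = Z_{ij}Z_{ik} - \sigma_{jk}$, so the $W_i$ are i.i.d.\ with mean zero and the statistic of interest is the centered sum $\sum_{i=1}^n W_i = \sum_{i=1}^n Z_{ij}Z_{ik} - n\sigma_{jk}$. By the exponential Markov inequality, for any $s>0$,
\[
\Pr\Bigl(\sum_{i=1}^n W_i \ge nt\Bigr) \le \exp(-snt)\,\bigl(\mathbb{E}\,e^{sW_1}\bigr)^n,
\]
with an identical bound for the lower tail after replacing $W_i$ by $-W_i$. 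Thus the whole problem reduces to showing $\log\mathbb{E}\,e^{sW_1}\le Cs^2$ for $|s|\le s_0$, with $C$ and $s_0$ depending only on $\varepsilon_0$.

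The key structural step is to write $Z_{1j}Z_{1k}$ as a quadratic form in a standard Gaussian. Let $\mathbf{R}$ be the $2\times 2$ principal submatrix of $\bSigma$ indexed by $\{j,k\}$, so that $(Z_{1j},Z_{1k})^T = \mathbf{R}^{1/2}\mathbf{g}$ for $\mathbf{g}\sim\mathcal{N}_2(\zerovec,\bmI_2)$. By Cauchy interlacing the eigenvalues of $\mathbf{R}$ lie in $(0,\varepsilon_0]$. Symmetrizing the scalar $Z_{1j}Z_{1k}=\mathbf{g}^T\mathbf{R}^{1/2}\mathbf{e}_1\mathbf{e}_2^T\mathbf{R}^{1/2}\mathbf{g}$ gives $Z_{1j}Z_{1k}=\mathbf{g}^T\mathbf{M}\mathbf{g}$ with $\mathbf{M}=\tfrac12\mathbf{R}^{1/2}(\mathbf{e}_1\mathbf{e}_2^T+\mathbf{e}_2\mathbf{e}_1^T)\mathbf{R}^{1/2}$, whose operator norm is at most $\tfrac12\|\mathbf{R}\|\le\varepsilon_0/2$; hence its eigenvalues satisfy $|\mu_l|\le\varepsilon_0/2$. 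Diagonalizing $\mathbf{M}$ and using rotational invariance of $\mathbf{g}$, we get $W_1=\sum_{l=1}^2\mu_l(g_l^2-1)$ for i.i.d.\ standard normals $g_l$ (note $\tr\mathbf{M}=\sigma_{jk}$), so the MGF factorizes into centered chi-square MGFs.

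Now bound the log-MGF. Since $\mathbb{E}\,e^{s\mu_l(g_l^2-1)}=(1-2s\mu_l)^{-1/2}e^{-s\mu_l}$ whenever $2s\mu_l<1$, and using the elementary estimate $-\tfrac12\log(1-u)-\tfrac{u}{2}\le u^2$ valid for $|u|\le\tfrac12$, we obtain, for $|s|\le s_0:=1/(2\varepsilon_0)$ (which forces $|2s\mu_l|\le\tfrac12$),
\[
\log\mathbb{E}\,e^{sW_1}=\sum_{l=1}^2\Bigl\{-\tfrac12\log(1-2s\mu_l)-s\mu_l\Bigr\}\le\sum_{l=1}^2 4s^2\mu_l^2\le 2\varepsilon_0^2 s^2=:Cs^2 .
\]
The degenerate case $j=k$ is even simpler: there $W_1=\sigma_{jj}(g^2-1)$ with $\sigma_{jj}\le\varepsilon_0$, yielding the same quadratic bound.

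Finally, insert this into the Chernoff bound to get $\Pr(\sum_i W_i\ge nt)\le\exp(-n(st-Cs^2))$ and take $s=t/(2C)$, giving $\exp(-nt^2/(4C))$ provided $s\le s_0$, i.e.\ $t\le 2Cs_0=:\delta$. Combining the two tails yields the claim with $c_1=2$, $c_2=1/(4C)=1/(8\varepsilon_0^2)$, and $\delta=2\varepsilon_0$, all functions of $\varepsilon_0$ alone. The one point requiring care is the uniform-in-$(j,k)$ eigenvalue control via interlacing, since this is precisely what makes the constants depend only on $\varepsilon_0$; and the restriction $|t|\le\delta$ is intrinsic rather than an artifact, reflecting that the optimizing $s$ must remain inside the region where the chi-square MGF is finite, so only a Gaussian (not exponential) tail is available in this range.
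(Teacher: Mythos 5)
You should note at the outset that the paper contains no proof of this statement at all: it is imported verbatim as Lemma A.3 of \citet{bickel2008regularized} in the supplementary ``Auxiliary Lemmas'' section, so the only comparison available is with the standard argument behind the cited result. Your blind derivation is correct and self-contained, and it follows the same family of ideas: a Chernoff bound plus control of the log-MGF of the centered product near the origin. Where the usual proof (as in Bickel and Levina) reduces $Z_{1j}Z_{1k}$ to chi-squares via the polarization identity $Z_{1j}Z_{1k}=\tfrac14\{(Z_{1j}+Z_{1k})^2-(Z_{1j}-Z_{1k})^2\}$ and then invokes Bernstein-type chi-square tail bounds, you instead diagonalize the symmetric quadratic form $\mathbf{M}=\tfrac12\mathbf{R}^{1/2}(\mathbf{e}_1\mathbf{e}_2^T+\mathbf{e}_2\mathbf{e}_1^T)\mathbf{R}^{1/2}$ directly; the two decompositions are equivalent (your $\mu_1,\mu_2$ are, up to rotation, the $\pm\tfrac14$-weighted variances of $Z_{1j}\pm Z_{1k}$), and your version has the merit of making the dependence of $c_1,c_2,\delta$ on $\varepsilon_0$ fully explicit, with the interlacing step correctly supplying the uniformity over $(j,k)$ that the lemma needs. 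One small slip: for $j=k$ you assert ``the same quadratic bound,'' but there the single eigenvalue is $\sigma_{jj}\le\varepsilon_0$ rather than $\le\varepsilon_0/2$, so the admissible range shrinks to $|s|\le 1/(4\varepsilon_0)$ and the log-MGF bound degrades to $4\varepsilon_0^2 s^2$; the conclusion survives with $c_2=1/(16\varepsilon_0^2)$ and the same $\delta=2\varepsilon_0$, but your stated constant $c_2=1/(8\varepsilon_0^2)$ is valid only off the diagonal. Finally, you correctly read the displayed event with the centering $n\sigma_{jk}$ (the paper's typeset version drops the factor $n$), and your closing remark that the restriction $t\le\delta$ is intrinsic---the optimizer $s$ must stay where the chi-square MGF exists, so only a Gaussian-regime tail is available---is exactly right; the meaningful range is of course $0<t\le\delta$, the bound being vacuous for $t\le 0$.
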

	
	\section{Proof of Corollary \ref{cor-post-conv}}\label{app-cor-post-conv}
	The proof of this result is exactly similar to that of Theorem~\ref{thm:main-post-conv}. The proof of the latter relies on Lemma~\ref{lemma:prior-prob} and Lemma~\ref{lemma:prior-constraint} that are specific to the graphical horseshoe-like prior, and the corollaries given by Corollary~\ref{corr:GHS-prior-prob} and Corollary~\ref{corr:GHS-prior-constraint} are respectively their counterparts corresponding to the graphical horseshoe prior. The utilization of the general lemma on Kullback--Leibler distance computations as outlined in Lemma~\ref{lemma:KL} remains identical in the present case.
	
	\section{Proof of Lemma \ref{lemma:concave}}\label{app-lemma-concave}
	We will prove concavity by proving the second derivative is negative. By direct calculations:
	\begin{equation}
		\frac{d^2}{dx^2}(pen_{a}(x)) = \frac{d^2}{dx^2}(-\log \log ( 1 + \frac{a}{x^2})) = - \frac{2a \left((a+3x^2)\log(1+a/x^2)-2a \right)}{x^2(a+x^2)^2(\log^2(1+a/x^2))}. \label{eq:dd2}
	\end{equation}
	Since the denominator of the RHS in \eqref{eq:dd2} is always positive, we can investigate the sign of the double derivative of the above penalty function by considering only the numerator, and furthermore as $a > 0$, we need the following to hold to prove concavity:
	\beq
	(a+3x^2)\log(1+a/x^2)-2a \ge 0. \label{eq:dd3}
	\eeq
	Substituting $\log(1+a/x^2)$ by $z$, so that $x^2 = a/(\exp(z) - 1)$, we have $z \ge 0$, and the RHS of \eqref{eq:dd3} is given by,
	\begin{align*}
		(a+3x^2)\log(1+a/x^2)-2a & = \left(a + \frac{3a}{\exp(z) - 1}\right)z - 2a \\
		& = a \left(\frac{3z + (z-2)(\exp(z) - 1)}{\exp(z) - 1}\right) \\
		& > a \left(\frac{z(1+z)}{\exp(z) - 1}\right) > 0, \quad \mathrm{since} \exp(z) > 1 + z. 
	\end{align*} 
	This proves the (strict) concavity of the graphical horseshoe-like penalty function. 
	

	\section{Estimating the Global Scale Parameter}
	\label{global_scale_param_computation}
	We use the technique of \citet{piironen2017sparsity} to tune the global parameter for GHS-LIKE-ECM via an estimate of the \emph{effective model size}.
	Consider a linear regression model $y_i = \thetavec^T \bx_i + \epsilon_i,\,\epsilon_i \sim \mathcal{N}(0,\sigma^2)$ for $i = 1,\ldots,n$, where $\thetavec = \{\theta_j\}$ and $\bx_i$ are $p$-dimensional vectors. Consider the global-local shrinkage priors of the form $\theta_j \sim \mathcal{N}(0, \lambda_j^2 a),\, \lambda_j \sim \pi(\lambda_j)$. Then, assuming the design matrix to be orthogonal, the shrinkage estimates of the elements of $\thetavec$ can be written as, $\bar{\theta}_j = (1-\kappa_j)\hat{\theta}_j$. In this context, $\kappa_j$ is called shrinkage coefficient, which takes the form $\left(1+n\sigma^{-2}\tau^2\lambda_j^2\right)^{-1}$ and $\hat{\theta}_j$ is the ordinary least squares (OLS) estimate. For the horseshoe-like prior, $\pi(\theta_j \mid u_j, a)  \sim \mathcal{N}\left(0,a/(2u_j)\right),\text{and } \pi(u_j)  = (1-\exp(-u_j))/(2\pi^{1/2}{u_j}^{3/2})$. Hence, the shrinkage coefficient is $\left(1+n\sigma^{-2} a (2u_j)^{-1}\right)^{-1}$.
	
	\citet{piironen2017sparsity} define the effective model size as $m_{\text{eff}}= \sum_{j= 1}^{p}(1-\kappa_j)$. In order to compute the global scale parameter $a$ or to decide a prior for it, they set $\mathbb{E}(m_\text{eff}) = p_0$, which is the expected number of non-zero elements in $\thetavec$. In the context of our problem, we need to find an expression for $\mathbb{E}(\kappa_j)$ in order to solve for $a$ in  $\mathbb{E}(m_\text{eff}) = p_0$. Using the standard Jacobian technique we get the density of $\kappa_j$ as,
	\begin{equation*}
		\pi(\kappa_j) = \dfrac{1}{2\pi^{1/2}}\kappa_j^{-3/2}(1-\kappa_j)^{-1/2}\left(\dfrac{na}{2\sigma^2}\right)^{-1/2}\left\{1- \exp\left(-\frac{\kappa_j}{1-\kappa_j}\frac{na}{2\sigma^2}\right)\right\}.
	\end{equation*}
	After some trivial variable transforms, $\mathbb{E}(\kappa_j \mid a)$ can be written as,
	\begin{equation*}
		\mathbb{E}(\kappa_j \mid a) =  \frac{2\sigma}{(2\pi n a)^{1/2}}\int_{0}^{\pi/2}\left\{1- \exp\left(-\frac{na}{2\sigma^2}\tan^2\eta\right)\right\}d\eta. 
	\end{equation*}
	As a closed form solution of the above integral is not available, we set $na(2\sigma^2)^{-1} = m^2$ and approximate $\exp\left(-na \tan^2\eta/(2\sigma^2)\right)$ by a polynomial of order 16 using Taylor's series about $\eta=0$. After integrating the approximated polynomial with respect to $\eta$, we get a polynomial of order 31 in $m$ to solve for the value of global scale parameter $a$. Thus, $\mathbb{E}(m_\text{eff}) = p_0$ gives,
	\begin{equation*}
		1 - \sum_{r = 0}^{31} c_r m^r = \frac{p_0}{p},
	\end{equation*}
	where $c_r,\, r = 0,\ldots,31$ are the coefficients of the polynomial which are obtained after Taylor's expansion followed by integration. In all our simulations and real data applications, we fix $p_0/p = 2/(p-1)$ and assume that $\sigma^2 =1$. For a given value of $(n,p)$, we can get a value for the global scale parameter $a = 2m^2/n$ by solving the above equation for $m$. For our simulations and real data application, we got 15 pairs of complex conjugates while solving the above equation (in \texttt{R}, using function \texttt{polyroot()}) and only one real positive value for $m$. The values of global scale parameter hence obtained for simulations and real data experiment are as follows:
	\begin{enumerate}
		\itemsep0em
		\item \textbf{Table~\ref{simulation_p_100_n_120_part_1}, \ref{simulation_p_100_n_120_part_2}}: $(n,p) = (120,100)$, estimate of the global scale parameter $a = 0.0143$.
		\item \textbf{Table~\ref{simulation_p_200_n_120_part_1}, \ref{simulation_p_200_n_120_part_2}}: $(n,p) = (120,200)$, estimate of the global scale parameter $a = 0.0169$.
		\item \textbf{Proteomics Data:} $(n,p) = (33, 67)$, estimate of the global scale parameter $a = 0.0519$.
	\end{enumerate}
	\section{Diagnostics: Choice of Starting Values for the ECM Algorithm and Trace Plots for the ECM and MCMC Algorithms}
	\label{sec:simulation_study_2}
	Since the likelihood surface under the GHS-Like prior is likely highly multi-modal, and the ECM algorithm is only guaranteed to find a local mode, we provide additional numerical results investigating the effect of starting values on the estimates. Given a true precision matrix $\bOmega_0$ and $(n,p) = (50,100)$ we generate 50 data sets, and perform estimation with 1, 10, 20 and 50 randomly chosen starting points. The accuracy measures of these estimates are represented as 1*, 10*, 20* and 50* in the Table \ref{multi_start_point_supp_table_n_50_p_100}. In general, we observe that the estimates from 50 different starting points perform the best in terms of Stein's loss, Frobenius norm, and TPR; while being slightly worse in terms of FPR and MCC. 
	
	In the presence of a highly multimodal likelihood surface, it is safe to believe that true signal which might be missed for any given starting point. Hence averaging across different starting values leads to an improvement in terms of most metrics and this what we choose to follow in our examples. Nevertheless, it is reassuring to see the final results are not too sensitive to the choice of starting values.

	\begin{table}[!h]
		\centering
		\caption{Mean (sd) Stein's loss, Frobenius norm, true positive rates and false positive rates, Matthews Correlation Coefficient of precision matrix estimates for GHS-LIKE-ECM over 50 data sets with $p=100$ and $n=50$. The best performer in each row is shown in bold. Average CPU time is in seconds.
		}
		\label{multi_start_point_supp_table_n_50_p_100}
		\begin{footnotesize}
			\noindent\makebox[\textwidth]{%
				\begin{tabular}{|l| rrrr | rrrr|}
					\hline
					& \multicolumn{4}{c|}{Random} & \multicolumn{4}{c|}{Hubs} \\
					nonzero pairs & \multicolumn{4}{c|}{35/4950} & \multicolumn{4}{c|}{90/4950} \\
					nonzero elements & \multicolumn{4}{c|}{$\sim -\mathrm{Unif}(0.2,1)$} & \multicolumn{4}{c|}{0.25} \\
					$p=100, n=50$ & 1*   & 10*   & 20*   & 50*   &  1*   & 10*   & 20*   & 50*     \\
					\hline
					
					Stein's loss & 9.624 & 8.503 & 8.196 & \textbf{8.302}
					& 12.563 & 10.494 & 10.408 & \textbf{10.268}  \\
					& (0.915) & (0.801) & (0.749) & (0.749) 
					& (0.83) & (0.885) & (0.831) & (0.81)\\
					
					F norm & 3.674 & 3.344 & 3.286 & \textbf{3.279} 
					& 4.166 & 3.672 & 3.645 & \textbf{3.616}  \\
					& (0.237) & (0.191) & (0.191) & (0.178)
					& (0.197) & (0.188) & (0.171) & (0.174) \\
					
					TPR & 0.703 & 0.816 & 0.814 & \textbf{0.825}  
					& 0.551 &  0.756 & 0.766 & \textbf{0.772}  \\		
					& (0.044) & (0.042) & (0.046) & (0.039) 
					& (0.049) & (0.053) & (0.054) & (0.053)  \\
					
					FPR & \textbf{0.021} & 0.054 & 0.058 & 0.063 
					& \textbf{0.015} & 0.038 & 0.041 & 0.044\\
					& (0.002) & (0.004) & (0.004) & (0.005) 
					& (0.002) & (0.004) & (0.004) & (0.004)  \\
					
					MCC & \textbf{0.329} & 0.271 & 0.26 & 0.253 
					& \textbf{0.464} & 0.435 & 0.426 & 0.419  \\
					& (0.024) & (0.014) & (0.013) & (0.011)  
					& (0.034) & (0.027) & (0.025) & (0.023) \\

					Avg CPU time & 6.735 & $\cdots$ & $\cdots$ & $\cdots$
					& 5.378 &$\cdots$&$\cdots$& $\cdots$ \\
					
					\hline
					
					& \multicolumn{4}{c|}{Cliques negative} & \multicolumn{4}{c|}{Cliques positive} \\
					nonzero pairs & \multicolumn{4}{c|}{30/4950} & \multicolumn{4}{c|}{30/4950} \\
					nonzero elements & \multicolumn{4}{c|}{-0.45} & \multicolumn{4}{c|}{0.75} \\
					$p=100, n=50$ & 1*   & 10*   & 20*   & 50*   &  1*   & 10*   & 20*   & 50*     \\
					\hline
					Stein's loss & 9.149 & 7.417 & 7.276 & \textbf{7.289}
					& 13.896 & 9.156 & 8.719 & \textbf{8.65}  \\
					& (0.774) & (0.673) & (0.673) & (0.67) 
					& (1.032) & (0.83) & (0.822) & (0.848)\\
					
					F norm & 3.746 & 3.231 & 3.18 & \textbf{3.174} 
					& 5.453 & 4.178 & 3.995 & \textbf{3.974}  \\
					& (0.265) & (0.263) & (0.215) & (0.255)
					& (0.245) & (0.241) & (0.268) & (0.261) \\
					
					TPR & 0.911 & 0.995 & 0.999 & \textbf{1}  
					& 0.741 &  \textbf{0.997} & \textbf{0.997} & \textbf{0.997}  \\		
					& (0.029) & (0.012) & (0.005) & (0) 
					& (0.048) & (0.001) & (0.01) & (0.01)  \\
					
					FPR & \textbf{0.021} & 0.053 & 0.058 & 0.064 
					& \textbf{0.023} & 0.051 & 0.055 & 0.059\\
					& (0.002) & (0.004) & (0.005) & (0.005) 
					& (0.002) & (0.003) & (0.004) & (0.005)  \\
					
					MCC & \textbf{0.433} & 0.312 & 0.3 & 0.287 
					& \textbf{0.344} & 0.318 & 0.308 & 0.298 \\
					& (0.016) & (0.013) & (0.012) & (0.011)  
					& (0.024) & (0.009) & (0.01) & (0.01) \\
					
					Avg CPU time & 5.08 & $\cdots$ & $\cdots$ & $\cdots$
					& 5.088 &$\cdots$&$\cdots$& $\cdots$ \\
					\hline
			\end{tabular}}
		\end{footnotesize}
	\end{table}
	
    Further, Figure~\ref{sample_trace_plot} shows a sample trace plot of log-likelihood when the precision matrix was estimated for a representative data set using GHS-LIKE-ECM and GHS-LIKE-MCMC. It is apparent that convergence to a local maximum (for ECM) and to the stationary distribution (for MCMC) occur relatively quickly. Similar behavior was observed in all other settings. 

	\begin{figure}[!htb]
	    \centering
	    \includegraphics[scale =0.4]{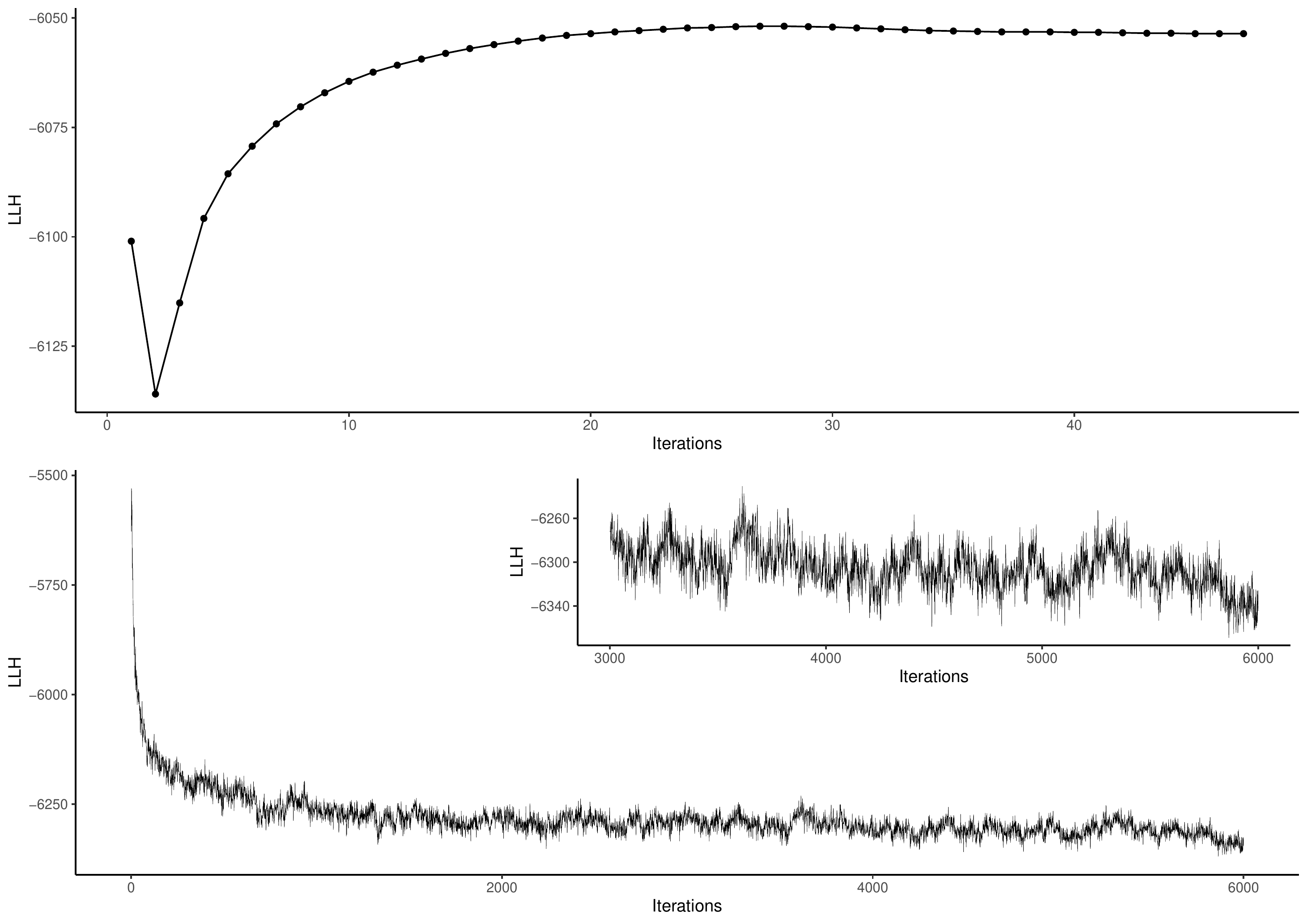}
	    \caption{Top and bottom panels show the plot of log-likelihood (LLH) vs. Iterations when the precision matrix was estimated for a representative data set using GHS-LIKE-ECM and GHS-LIKE-MCMC procedures respectively, for `Hubs' structure when $n=120,\, p=100$. The inset plot in the bottom panel shows the zoomed-in version of the plot after the burn-in period.}
	    \label{sample_trace_plot}
	\end{figure}
	\section{Additional Details on the Proteomics Data}
	\label{app-real}
	Table~\ref{nodes_num_and_protein_names} provides the map between the node numbers and protein names in Figure~\ref{graph_est_all_methods}.
	\begin{table}[!ht]
		\caption{Map between node numbers and protein names in Figure~\ref{graph_est_all_methods}.}
		\label{nodes_num_and_protein_names}
		\begin{footnotesize}
		\noindent\makebox[\textwidth]{
		\begin{tabular}{|cc|cc|cc|cc|cc|cc|cc|}
			\hline
			1  & BAK1    & 11 & MYH11                                                     & 21 & PCNA    & 31 & TP53                                                        & 41 & ATK1S1                                                  & 51 & MAPK14   & 61 & MTOR   \\ 
			2  & BAX     & 12 & \begin{tabular}[c]{@{}c@{}}RAB11A, \\ RAB11B\end{tabular} & 22 & FOXM1   & 32 & RAD50                                                       & 42 & TSC2                                                    & 52 & RPS6KA1  & 62 & RPS6   \\ 
			3  & BID     & 13 & CTNNB1                                                    & 23 & CDH1    & 33 & RAD51                                                       & 43 & INPP4B                                                  & 53 & YBX1     & 63 & RB1    \\ 
			4  & BCL2L11 & 14 & GADPH                                                     & 24 & CLDN7   & 34 & XRCC1                                                       & 44 & PTEN                                                    & 54 & EGFR     & 64 & ESR1   \\ 
			5  & CASP7   & 15 & RBM15                                                     & 25 & TP53BP1 & 35 & FN1                                                         & 45 & ARAF                                                    & 55 & ERBB2    & 65 & PGR    \\ 
			6  & BAD     & 16 & CDK1                                                      & 26 & ATM     & 36 & CDH2                                                        & 46 & JUN                                                     & 56 & ERBB3    & 66 & AR     \\ 
			7  & BCL2    & 17 & CCNB1                                                     & 27 & CHEK1   & 37 & COL6A1                                                      & 47 & RAF1                                                    & 57 & SHC1     & 67 & GATA3  \\ 
			8  & BCL2L1  & 18 & CCNE1                                                     & 28 & CHEK2   & 38 & SERPINE1                                                    & 48 & MAPK8                                                   & 58 & SRC      &    &        \\ 
			9  & BIRC2   & 19 & CCNE2                                                     & 29 & XRCC5   & 39 & \begin{tabular}[c]{@{}c@{}}ATK1, ATK2, \\ ATK3\end{tabular} & 49 & \begin{tabular}[c]{@{}c@{}}MAPK1, \\ MAPK3\end{tabular} & 59 & EIF4EBP1 &    &        \\ 
			10 & CAV1    & 20 & CDKN1B                                                    & 30 & MRE11A  & 40 & \begin{tabular}[c]{@{}c@{}}GKS3A, \\ GKS3B\end{tabular}     & 50 & MAP2K1                                                  & 60 & RPS6KB1  &    &       \\ \hline
		\end{tabular}}
\end{footnotesize}
	
	\end{table}
	
\end{document}